\theoremstyle{plain}
\newtheorem{theorem}{Theorem}[section]
\newtheorem{corollary}[theorem]{Corollary}
\newtheorem{proposition}[theorem]{Proposition}
\newtheorem{lemma}[theorem]{Lemma}
\newtheorem*{theorem*}{Theorem}
\theoremstyle{definition}
\newtheorem{remark}[theorem]{Remark}
\newtheorem{definition}[theorem]{Definition}
\newtheorem*{definition*}{Definition}
\newtheorem{example}[theorem]{Example}
\def\on{\operatorname}
\def\bb{\mathbb}
\def\cal{\mathcal}
\def\PSL{\on{PSL}}
\def\SL{\on{SL}}
\def\GL{\on{GL}}
\def\Aut{\on{Aut}}
\def\Fit{\on{Fit}}
\def\SL{\on{SL}}
\def\Out{\on{Out}}
\def\C{\bb{C}}
\def\F{\bb{F}}
\def\Inn{\on{Inn}}
\def\ol{\overline}
\def\ul{\underline}
\def\graphcomp{\ol{\Gamma}}
\def\digraph{\overrightarrow{\Gamma}}
\newcommand{\edge}[2]{ {#1 \! - \! #2} }
\newcommand{\primetri}{
\begin{tikzpicture}[scale=0.2]
\draw (-1, 0) -- (0, -1.73) -- (1, 0) -- cycle;

\draw[fill=black] (-1, 0) circle (0.25);
\draw[fill=black] (0, -1.73) circle (0.25);
\draw[fill=black] (1, 0) circle (0.25);
\end{tikzpicture}
}
\newcommand{\primeline}{
\begin{tikzpicture}[scale=0.2]
\draw (-1, 0) -- (0, -1.73) -- (1, 0);

\draw[fill=black] (-1, 0) circle (0.25);
\draw[fill=black] (0, -1.73) circle (0.25);
\draw[fill=black] (1, 0) circle (0.25);
\end{tikzpicture}
}
\title[Prime Graphs of Groups with $K_3$ or Cyclic Composition Factors]{The Prime Graphs of Groups with Arithmetically Small Composition Factors}
\author[Edwards, Keller, Pesak, Sellakumaran Latha]{Timothy J. Edwards, Thomas Michael Keller, Ryan M. Pesak, Karthik Sellakumaran Latha}
\date{}
\begin{document}

\maketitle

\begin{abstract}
    We continue the study of prime graphs of finite groups, also known as Gruenberg-Kegel graphs. The vertices of the prime graph of a finite group are the prime divisors of the group order, and two vertices $p$ and $q$ are connected by an edge if and only if there is an element of order $pq$ in the group. Prime graphs of solvable groups have been characterized in graph theoretical terms only, as have been the prime graphs of groups whose only nonsolvable composition factor is $A_5$. In this paper we classify the prime graphs of all groups whose composition factors have arithmetically small orders, that is, have no more than three prime divisors in their orders. We find that all such graphs have $3$-colorable complements, and we provide full characterizations of the prime graphs of such groups based on the exact type and multiplicity of the nonabelian composition factors of the group.
\end{abstract}

\section{Introduction}

This paper continues the investigation of the prime graphs of finite groups, also called Gruenberg-Kegel graphs. The prime graph of a group is the graph whose vertices are the prime divisors of the group order, where two vertices $p$ and $q$ are connected if and only if there is an element of order $pq$. The notion was introduced by Gruenberg and Kegel in the 1970s, and prime graphs have been extensively studied ever since. For example, in \cite{PrimeGraphsAlmostSimple}, Maslova and Gorshkov classified all almost simple groups with prime graphs equal to the prime graph of a solvable group. It is also interesting to look at a given graph $\Gamma$ and ask whether groups exist whose prime graphs are isomorphic to $\Gamma$. In \cite{complete_bipartite_prime_graphs}, Maslova and Pagon examined complete bipartite graphs with respect to this question. More recently, Cameron \cite{cameron} noticed that prime graphs also have strong connections to a number of other graphs defined for finite groups, and the work of Qian, Wang, and Wei \cite{qian} shows that the prime graph is always a subgraph of the co-degree graph for that group.\\

In 2015, Gruber, Keller, Lewis, Naughton, and Strasser studied the prime graphs of solvable groups. The main result of that paper is the following complete characterization:

\begin{lemma}\label{lem:solvablegroups} 
\cite[Theorem 2.10]{solvable_groups} A simple graph $\Gamma$ is isomorphic to the prime graph of a solvable group if and only if its complement $\graphcomp$ is $3$-colorable and triangle-free.
\end{lemma}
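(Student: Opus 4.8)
The plan is to prove the two implications separately, after translating the graph-theoretic conditions into group-theoretic ones. Note that $\overline{\Gamma}$ is triangle-free exactly when $\Gamma$ has no independent set of three vertices (independence number at most $2$), and $\overline{\Gamma}$ is $3$-colorable exactly when the vertices of $\Gamma$ can be partitioned into three cliques, since the chromatic number of $\overline{\Gamma}$ equals the clique-cover number of $\Gamma$. So I must establish: (i) every solvable prime graph has independence number at most $2$ and clique-cover number at most $3$; and (ii) every graph with these two properties is realized as $\Gamma(G)$ for some solvable $G$.

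For the necessity direction (i), the triangle-free part is the cleaner half. I would invoke the Gruenberg--Kegel theorem that a solvable group has a prime graph with at most two connected components. Given three primes $p,q,r$ dividing $|G|$, Hall's theorem supplies a Hall $\{p,q,r\}$-subgroup $H$, which is solvable with $\pi(H)=\{p,q,r\}$; if $p,q,r$ were pairwise nonadjacent in $\Gamma(H)$ then $\Gamma(H)$ would be three isolated vertices, hence three components, contradicting the two-component bound. Since an element of order $pq$ in $H$ is one in $G$, no three primes of $G$ are pairwise nonadjacent, so $\Gamma(G)$ has independence number at most $2$. For the clique-cover part I would argue by induction on $|G|$ using $F=\Fit(G)$: because $F$ is nilpotent, its prime set $\pi(F)$ is a clique of $\Gamma(G)$, which becomes one color class, and I would then try to show that the remaining primes, acting on $F$ through $G/F$, split into two further cliques by analyzing which primes centralize nontrivial elements of $F$.

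For the sufficiency direction (ii), which I expect to be the main obstacle, I would fix a partition of the vertices into three cliques $V_1,V_2,V_3$ witnessing $3$-colorability, so that every nonedge of $\Gamma$ runs between distinct classes and the nonedges form a triangle-free tripartite graph. The group would be assembled as an iterated semidirect product built from elementary abelian pieces: a nonedge between primes $p$ and $q$ is forced by arranging a fixed-point-free, Frobenius-type action of a $q$-element on a $p$-group, so that no element of order $pq$ arises, while the edges inside a clique are supplied by building in a nilpotent subgroup that already contains all the required products. The delicate point is to make all the prescribed fixed-point-free actions coexist in one solvable group without creating spurious elements of order $pq$ for pairs meant to be adjacent; here the triangle-free hypothesis on $\overline{\Gamma}$ is precisely what prevents incompatible simultaneous demands, and the existence of the needed coprime fixed-point-free modules would be secured by choosing field sizes via Dirichlet's theorem on primes in arithmetic progressions.

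The hardest step is controlling the entire set of element orders in the iterated extension: verifying that the constructed $G$ has an element of order $pq$ for every edge and for no nonedge requires tracking fixed points of coprime actions through each layer, and ensuring that the three-clique decomposition can be honored globally and not merely locally. I would organize this as an induction on the number of primes handled so far, adjoining one prime's Sylow structure at each stage and rechecking the adjacency bookkeeping, with the triangle-free and $3$-colorable conditions serving as the invariant that keeps the construction consistent.
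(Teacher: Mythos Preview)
This lemma appears in the paper only as a citation to \cite[Theorem 2.10]{solvable_groups}, without a self-contained proof. However, the introduction does summarize the cited argument: for each edge $\edge{p}{q}$ of $\graphcomp(G)$ one orients it according to whether the Hall $\{p,q\}$-subgroup is Frobenius of type $(p,q)$ or $2$-Frobenius of type $(p,q,p)$, shows that this \emph{Frobenius digraph} contains no directed $3$-path, and then invokes the Gallai--Hasse--Roy--Vitaver theorem to obtain $3$-colorability of $\graphcomp(G)$. Triangle-freeness follows from the same Hall-subgroup analysis (this is essentially Lucido's three-primes lemma; your route through the two-component bound for solvable prime graphs is a valid alternative proof of that piece). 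Your sufficiency sketch---iterated semidirect products with Frobenius actions arranged via Dirichlet---is in the same spirit as the construction of \cite[Theorem 2.8]{solvable_groups} that the paper repeatedly invokes.

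The genuine gap is in your $3$-colorability argument for the necessity direction. You propose to take $\pi(\Fit(G))$ as one color class and then split the remaining primes into two further cliques ``by analyzing which primes centralize nontrivial elements of $F$.'' But you give no mechanism for why the primes in $\pi(G)\setminus\pi(\Fit(G))$ should partition into just two cliques of $\Gamma(G)$ on the basis of centralizer behavior, and in general they need not: the color classes produced by the Frobenius-digraph proof are not $\pi(\Fit(G))$ together with two residual cliques, but rather the sets of vertices of in-degree zero, out-degree zero, and neither, in the orientation. The digraph approach sidesteps the difficulty entirely by never exhibiting the color classes directly---once no directed $3$-path exists, Gallai--Hasse--Roy--Vitaver supplies the coloring automatically. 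Your Fitting-subgroup induction, as stated, does not close.
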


This result was expanded upon in \cite{A5_paper} and \cite{A5classification} to classify the prime graphs of what the authors define as pseudo-solvable groups: groups whose composition factors are either cyclic or $A_5$. Groups with a single non-abelian composition factor were also investigated in \cite{1compositionfactor} with respect to their prime graphs. In this paper, we seek to expand on these results to classify the prime graphs of groups whose composition factors are either cyclic or another fixed simple group $T$. To this end, we introduce some new definitions.


\begin{definition}
Let $G$ be a group and $\mathcal{T}$ a set of distinct (isomorphism types of) nonabelian simple groups. We say that $G$ is \textit{pseudo $\mathcal{T}$-solvable} if each of its composition factors is either abelian or isomorphic to an element in $\mathcal{T}$. We say that $G$ is \textit{strictly pseudo $\mathcal{T}$-solvable} if $G$ is pseudo $\mathcal{T}$-solvable and has each element of $\mathcal{T}$ as at least one of its composition factors. If $\mathcal{T}$ contains only one element $T$, then we will abbreviate pseudo $\{T$\}-solvable to pseudo $T$-solvable.
\end{definition}

In this language, \cite{A5_paper} and \cite{A5classification} classify the prime graphs of pseudo $A_5$-solvable groups. A natural next step is to consider prime graphs of pseudo $T$-solvable groups for other simple nonabelian groups $T$ whose orders have exactly three prime divisors. Such simple nonabelian groups are called $K_3$-groups, and as \cite{K3_groups} reports, there are a total of eight such groups: $A_5, \PSL(2,7), A_6, \PSL(2,8), \PSL(2,17), \PSL(3,3)$, and the unitary groups $U_3(3)$ and $U_4(2)$. Some relevant information about each group is listed in Table \ref{table:1}. We denote the set (or family) of $K_3$-groups by $\mathcal{K}_3$. \\

The main result of this paper is a complete characterization of the prime graphs of all pseudo $\mathcal{K}_3$-solvable groups. It turns out that, in a way, "most" of them have prime graphs that are also prime graphs of solvable groups, as the following result shows.

\begin{theorem} \label{theorem1.3} (see Theorem \ref{thm:lookslikesolvable} below)
Let $G$ be a pseudo $\mathcal{K}_3$-solvable group, and suppose one of the following holds:
\begin{enumerate}
    \item $G$ has no nonabelian composition factors (i.e, $G$ is solvable);
    \item $G$ has exactly one nonabelian composition factor, which is $\PSL(3,3), U_3(3)$, or $U_4(2)$; or
    \item $G$ has at least two (not necessarily distinct) nonabelian composition factors.
\end{enumerate}
Then, $\graphcomp(G)$ is triangle-free and $3$-colorable.
\end{theorem}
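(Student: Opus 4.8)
The plan is to reduce everything to Lemma~\ref{lem:solvablegroups}: since that lemma says a graph is the prime graph of a solvable group exactly when its complement is triangle-free and $3$-colorable, it suffices to show that in each of the three cases the prime graph $\graph(G)$ is \emph{actually equal to} the prime graph of some solvable group, or equivalently to argue directly that $\graphcomp(G)$ is triangle-free and $3$-colorable. The unifying idea is that a triangle in $\graphcomp(G)$ corresponds to three primes $p,q,r$ dividing $|G|$ that are pairwise \emph{nonadjacent} in $\graph(G)$, i.e.\ $G$ has no elements of order $pq$, $qr$, or $pr$; and a proper $3$-coloring of $\graphcomp(G)$ is the same as a partition of the vertex set into three cliques of $\graph(G)$. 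So the whole theorem is a statement about how large the ``independent sets'' of $\graph(G)$ can be and how the primes can be grouped into three sets each of which is pairwise adjacent.

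First I would record, from Table~\ref{table:1}, the precise vertex sets and edge structure of the prime graphs of the eight $K_3$-groups, paying special attention to which of them have a \emph{connected} prime graph versus which have isolated vertices. This matters because the key arithmetic input for case (2) is exactly that $\PSL(3,3)$, $U_3(3)$, and $U_4(2)$ are the three $K_3$-groups whose own prime graphs are triangle-free with $3$-colorable complement in a particularly strong sense (their prime graphs on three vertices are ``spread out'' enough). For case (1) the statement is literally Lemma~\ref{lem:solvablegroups}, so there is nothing to prove beyond invoking it. The real content is in cases (2) and (3).

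For case (2), the strategy is a Frobenius/Hall-type analysis of the group's structure: write $G$ with its unique nonabelian composition factor $T \in \{\PSL(3,3), U_3(3), U_4(2)\}$, and control which primes can become joined by edges when one passes from $T$ to the full group $G$ (via extensions by solvable normal subgroups and by outer automorphisms). The three primes dividing $|T|$ contribute a fixed small subgraph, and the remaining primes divide the solvable ``part'' of $G$; I would use Lemma~\ref{lem:solvablegroups} on the solvable pieces together with a bound on how many new adjacencies the action of $T$ can force. The goal is to show no triangle appears in $\graphcomp(G)$ and to exhibit an explicit $3$-coloring, likely by assigning the three primes of $T$ to the three color classes as ``anchors'' and fitting the solvable primes around them. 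For case (3), the presence of at least two nonabelian composition factors tends to \emph{add} edges (elements of order $pq$ become more plentiful as the group grows), which intuitively makes $\graphcomp(G)$ sparser and hence easier to $3$-color and keep triangle-free; I would formalize this monotonicity and handle the finitely many ``base'' configurations of pairs of $K_3$-groups directly.

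The main obstacle I expect is case (2): unlike solvable groups, where Lemma~\ref{lem:solvablegroups} gives a clean graph-theoretic criterion, here one must genuinely track the interaction between the nonabelian factor $T$ and the surrounding solvable structure, in particular ruling out a triangle in the complement that uses one or two primes from $T$ together with primes from the solvable radical. Controlling these ``mixed'' non-adjacencies requires knowing, for each of $\PSL(3,3)$, $U_3(3)$, $U_4(2)$, exactly which semidirect products $V \rtimes T$ (with $V$ an irreducible module) produce an element of order $pq$, which is the delicate, group-specific part of the argument. Establishing the correct $3$-coloring in the presence of these constraints, rather than merely the triangle-free condition, is where I anticipate the bulk of the work.
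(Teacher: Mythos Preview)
Your outline is a plan, not a proof, and while its broad strokes are sound it misses the structural shortcuts that make the argument tractable. Three specific gaps:

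\textbf{The $N.T$ reduction.} You never invoke the crucial step (Corollary~\ref{cor:subgroupcor} and its generalization Lemma~\ref{lem:strongsubgrouplemma}): any strictly pseudo $T$-solvable $G$ contains a subgroup $K = N.T$ with $N$ solvable and $\pi(K) = \pi(G)$, so that $\graphcomp(G)$ is obtained from $\graphcomp(K)$ by deleting edges. Without this reduction, you are trying to control the prime graph of an arbitrary iterated extension, which is much harder than controlling $N.T$. Every case-(2) and case-(3) argument in the paper passes through this step first.

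\textbf{Case (2): subgroup inclusions rather than module analysis.} You propose to analyze $V \rtimes T$ for each $T \in \{\PSL(3,3), U_3(3), U_4(2)\}$ and rule out mixed non-adjacencies by representation theory. The paper avoids almost all of this. For $U_3(3)$ one uses $\PSL(2,7) \leq U_3(3)$ to pass to a pseudo $\PSL(2,7)$-solvable subgroup with the same prime set, then applies the already-proved classification for $\PSL(2,7)$; the only extra input is that $U_3(3)$ contains an element of order $6$, killing the $\edge{2}{3}$ edge and hence the lone possible triangle. The same trick with $A_6 \leq U_4(2)$ handles $U_4(2)$. Only $\PSL(3,3)$ is treated directly, and there the argument is short because $SD_{16}$ cannot act Frobeniusly and $\PSL(3,3)$ already has an order-$6$ element. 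Your proposed ``anchor coloring'' is never needed: in each case one exhibits a solvable subgroup with the full prime set (or shows $\graphcomp(G)$ is a subgraph of such a group's complement), so $3$-colorability comes for free from Lemma~\ref{lem:solvablegroups}.

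\textbf{Case (3): a uniform centralizer argument, not pair-by-pair casework.} Your plan to ``handle the finitely many base configurations of pairs of $K_3$-groups directly'' would mean dozens of cases. The paper instead takes the lowest nonabelian chief factor $T_1^n$ and a section $H_1 = T_1.S.T_2$, then analyzes $C_{H_1}(T_1)$ using only that $|\Out(T_1)| \in \{1,2,3,4\}$ for every $K_3$-group. This forces $C_{H_1}(T_1)$ to contain elements whose orders, combined with commuting elements of $T_1$, produce elements of order $6$, $2p$, and $3p$, so that $\graphcomp(G)[\{2,3,p\}]$ is disconnected. Once the $\{2,3,p\}$ triangle is broken, Lemma~\ref{lem:strongsubgrouplemma} plus Lemma~\ref{lem:remove_p-r_then_solvable} finish: there is a pseudo $T_1$-solvable subgroup $K$ with $\pi(K)=\pi(G)$, and $\graphcomp(K)$ minus one $\{2,3,p\}$-edge is already triangle-free and $3$-colorable. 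Your ``monotonicity'' intuition is exactly right, but the formalization is this centralizer computation, not an enumeration of pairs.
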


All the more interesting are then those prime graphs of pseudo $\mathcal{K}_3$-solvable groups which are not also prime graphs of some solvable group. These are completely characterized in the other main results of this paper, but they are too numerous to list them all here in the introduction. The paper is organized as follows. \\

In Sections \ref{section:psl27}, \ref{section:a6}, \ref{section:psl28}, \ref{section:psl33}, and \ref{section:psl217}, we fully classify prime graphs of
pseudo $T$-solvable groups in the cases where $T$ is $\PSL(2,7)$, $U_3(3)$, $A_6$, $U_4(2)$, $\PSL(2,8)$, $\PSL(3,3)$, or $\PSL(2,17)$. We find that in the cases of $\PSL(2,7)$, $\PSL(2,8)$, $\PSL(2,17)$, and $A_6$, there can be at most one triangle in the complement of the prime graph. We also establish restrictions on how this triangle can connect to the rest of the graph. As an example of our results, we obtain the following classification for $T = A_6$:

\begin{theorem} (see Theorem \ref{thm:A6-classification} below)
Let $\Gamma$ be a graph. Then the following are equivalent:
\begin{enumerate}
    \item $\Gamma$ is the prime graph of a pseudo $A_6$-solvable group $G$.
    \item $\graphcomp$ is $3$-colorable and has at most one triangle, which must be isolated if it exists.
\end{enumerate}
\end{theorem}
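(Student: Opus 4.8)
The plan is to prove both implications, treating the reverse direction (2)$\Rightarrow$(1) by an explicit construction and the forward direction (1)$\Rightarrow$(2) by a structural reduction to a single connectivity statement. For (2)$\Rightarrow$(1), suppose $\ol{\Gamma}$ is $3$-colorable with at most one triangle, which is isolated if present. If $\ol{\Gamma}$ has no triangle, then it is triangle-free and $3$-colorable, so Lemma~\ref{lem:solvablegroups} produces a solvable group with prime graph $\Gamma$, and solvable groups are vacuously pseudo $A_6$-solvable. If $\ol{\Gamma}$ has a single isolated triangle on vertices $\{a,b,c\}$, let $\Gamma'=\Gamma-\{a,b,c\}$; then $\ol{\Gamma'}$ is triangle-free and $3$-colorable, so Lemma~\ref{lem:solvablegroups} gives a solvable group $H$ with $\Gamma(H)\cong\Gamma'$, and since the construction underlying that lemma permits the realizing primes to be chosen freely, I take $H$ to be a $\{2,3,5\}'$-group. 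Setting $G=A_6\times H$ and identifying $a,b,c$ with $2,3,5$, an order computation in the direct product shows that $G$ has no element of order $6$, $10$, or $15$ (so $\{2,3,5\}$ is independent in $\Gamma(G)$), that the graph induced on $\pi(H)$ is $\Gamma(H)\cong\Gamma'$, and that each $s\in\{2,3,5\}$ is joined to each $p\in\pi(H)$ via an element $(x,y)$ with $|x|=s$ and $|y|=p$. Thus $\ol{\Gamma(G)}$ is $\ol{\Gamma'}$ together with an isolated triangle on $\{2,3,5\}$, so $\Gamma(G)\cong\Gamma$, and $G$ is pseudo $A_6$-solvable.

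For (1)$\Rightarrow$(2), let $G$ be pseudo $A_6$-solvable, so every nonabelian composition factor is $A_6$, and I split on their number. If $G$ has zero or at least two such factors, then Theorem~\ref{thm:lookslikesolvable} shows $\ol{\Gamma(G)}$ is triangle-free and $3$-colorable, which satisfies~(2). The remaining case, exactly one composition factor $A_6$, is the heart of the argument. Here I let $R$ be the solvable radical of $G$; then $G/R$ has trivial solvable radical and a single nonabelian composition factor $A_6$, so $\on{soc}(G/R)\cong A_6$ has trivial centralizer and $A_6\trianglelefteq G/R\hookrightarrow\Aut(A_6)$. Since $\pi(\Aut(A_6))=\{2,3,5\}$, we get $\pi(G/R)=\{2,3,5\}$, and hence every prime of $G$ outside $\{2,3,5\}$ divides $|R|$.

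The entire case then reduces to the following connectivity (isolation) lemma: for every prime $p\in\pi(G)\setminus\{2,3,5\}$ and every $s\in\{2,3,5\}$, the group $G$ contains an element of order $sp$. Granting this, everything follows. No triangle of $\ol{\Gamma(G)}$ can use a prime $p\notin\{2,3,5\}$, since the lemma makes $p$ adjacent in $\Gamma(G)$ to each of $2,3,5$, while the primes dividing $|R|$ carry the triangle-free complement $\ol{\Gamma(R)}$ of Lemma~\ref{lem:solvablegroups} (and $\Gamma(R)\subseteq\Gamma(G)$), ruling out a triangle entirely outside $\{2,3,5\}$. Hence the only possible triangle is $\{2,3,5\}$, and the lemma forces it to be joined to nothing else, i.e. isolated; moreover $3$-colorability follows by taking a clique partition of $\pi(R)$ from Lemma~\ref{lem:solvablegroups} and distributing $2,3,5$ among the three classes, which the isolation lemma permits.

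The main obstacle is precisely this connectivity lemma. To prove it I would analyze the coprime action of $A_6=\on{soc}(G/R)$ on a $p$-chief factor $V$ of $G$ lying in $R$ (an $\F_p$-module with $p\nmid|A_6|$) and show that for each $s\in\{2,3,5\}$ some element of order $s$ has a nonzero fixed space on $V$, which then lifts to an element of order $sp$ in $G$. The delicate point is that a single element of prime order can act fixed-point-freely on a particular irreducible module, so one must argue uniformly across all $p$-chief factors and all three primes; I expect to carry this out through a fixed-space (Brauer character) computation on the cross-characteristic modules of $A_6$, using that $A_6$, which contains a Klein four subgroup, is never a Frobenius complement and therefore cannot act fixed-point-freely.
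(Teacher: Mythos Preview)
Your construction for (2)$\Rightarrow$(1) is correct and matches the paper. The gap is in (1)$\Rightarrow$(2): your ``connectivity lemma'' is false. Let $p$ be a prime with $p\nmid 720$, let $V$ be a $4$-dimensional faithful irreducible $\F_p\,\SL(2,9)$-module (reduction of one of the two degree-$4$ faithful complex irreducibles of $\SL(2,9)\cong 2.A_6$), and set $G=V\rtimes\SL(2,9)$. Then $G$ is strictly pseudo $A_6$-solvable with exactly one nonabelian composition factor and $p\in\pi(G)\setminus\{2,3,5\}$, yet $G$ contains no element of order $5p$ and no element of order $2p$. Indeed, every element of order $5$ in $\SL(2,9)$ lies in a subgroup $\SL(2,5)$, and the restriction of $V$ to $\SL(2,5)$ is a faithful $4$-dimensional module, hence either the irreducible $\on{Sym}^3$ of the natural $2$-dimensional module or a sum of two $2$-dimensional irreducibles; in every such module the order-$5$ elements act fixed-point-freely. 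Likewise the unique involution (the centre) acts by $-1$. The flaw in your sketch is that ``the action of $A_6=\on{soc}(G/R)$ on a $p$-chief factor'' is not an action of $A_6$: what acts is a quotient of $G$, and when the relevant extension is a cover such as $2.A_6$ or $3.A_6$, its representation theory admits faithful irreducibles---absent for $A_6$ itself---in which order-$2$ or order-$5$ elements have no fixed points. Table~\ref{table:2} covers only $A_6$.

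The paper avoids this by proving only the \emph{conditional} statements actually needed. Unconditionally, $\graphcomp(G)\setminus\{\edge{3}{5}\}$ is triangle-free and $3$-colorable (Lemma~\ref{lem:A6_remove_3-5}) and $\edge{3}{p}\notin\graphcomp(G)$ for $p\notin\{2,3,5\}$ (Lemma~\ref{lem:A6_3-p}); together these force the only possible triangle to be $\{2,3,5\}$ and give $3$-colorability. Only \emph{after} assuming this triangle is present does one pass, via Corollary~\ref{cor:subgroupcor}, to a subgroup $K=N.A_6$ with $(|N|,15)=1$ (Lemma~\ref{lem:A6_subgrouplemma}); the coprimeness then lets the representation theory of $A_6$ (not a cover) rule out $\edge{5}{p}$ (Proposition~\ref{prop:A6_5-p}), while the pseudo $A_5$-solvable subgroup $N.A_5$ and \cite[Theorem~6.8]{A5_paper} rule out $\edge{2}{p}$. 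In the counterexample above $\SL(2,9)$ already has elements of order $6$ and $10$, so there is no $\{2,3,5\}$ triangle and these conditional hypotheses are never invoked---which is exactly why the theorem remains true there even though your lemma fails.
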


In the cases of $\PSL(3,3)$, $U_3(3)$, and $U_4(2)$, we find that the complement of the prime graph must be $3$-colorable and triangle-free, just like for solvable groups. Larger groups such as the unitary groups are easier to study because they have many convenient subgroups. \\
So together with \cite{A5classification} we now have a complete classification of the prime graphs of all pseudo $T$-solvable groups for any $K_3$-group $T$. Note, however, that the results obtained in this paper, which covers all $K_3$-groups except for $A_5$, are in stark contrast to the results for $A_5$ in that the prime graphs in case of $A_5$ are much richer in structure and, in particular, their complements can have an arbitrary number of triangles. The main reason for this difference is the well-known fact that $A_5$ is the only non-abelian simple group which can be a section in a Frobenius complement. Thus in the guise of $\SL(2,5)$, $A_5$ can act frobeniusly on other groups, and this is what makes the case of $A_5$ so different. This does not mean, however, that the other $K_3$-groups are all easy to handle. Some  are, but others pose their very own challenges. To handle $\PSL(2,8)$, for example, we need to invoke a deep result by Flavell \cite{Flavell}, and no other $K_3$-group requires us to pull out such a "big gun".\\
Finally, in Section \ref{section:multiple-Ts} we consider groups with multiple non-isomorphic $K_3$ composition factors. This case also leads to complements which are $3$-colorable and triangle-free. Together with the work in the previous sections this allows us to prove Theorem \ref{theorem1.3} above. \\

The techniques used in this paper are greatly inspired by \cite{solvable_groups}, \cite{A5_paper}, and \cite{A5classification}. A key step in many of our arguments is studying the representation theory of $K_3$-groups and their extensions. We often depend on GAP \cite{GAP} to computationally determine the irreducible representations of a group, but where we can, we provide theoretical arguments.\\

Before we begin, we explain some notations, conventions, and important ideas that are used throughout the paper. All groups discussed in this paper are finite, and all graphs that we discuss are simple. Sets of primes will usually be denoted $\pi$, and $\pi'$ will denote the set of primes that are not in $\pi$ where the universal set is dependent on the context. We follow the convention of the ATLAS and write $G = N.M$ if $N$ is a normal subgroup of $G$ such that $G/N \cong M$. 

Graphs will usually be denoted $\Gamma$, and $\graphcomp$ will denote the complement of $\Gamma$. The notation $V(\Gamma)$ will be used to denote the vertex set of $\Gamma$, and likewise $E(\Gamma)$ will be used to denote the edge set. 

Suppose $u, v, w, x \in V(\Gamma)$ and $\{u,v\} \in E(\Gamma)$. By abuse of notation, we write $u \in \Gamma$ and $\edge{u}{v} \in \Gamma$. Also, $\digraph$ will denote an orientation of $\graphcomp$. If there are edges from $u$ to $w$, $u$ to $x$, and $w$ to $x$ in $\digraph$, we may write statements like: $u \rightarrow w \in \digraph$, $u \rightarrow w \rightarrow x \in \digraph$, $w \leftarrow u \rightarrow x \in \digraph$, and $\{u, w, x\}$ forms a triangle in $\graphcomp$. We refer to a path on $n$ edges as an $n$-path. 

If $G$ is a group, define $\pi(G)$ to be the set of prime divisors of $|G|$. The prime graph of a group $G$, denoted $\Gamma(G)$, is a simple graph constructed as follows. The set of vertices corresponds to  the set of prime divisors of $|G|$, i.e. $V(\Gamma(G))=\pi(G)$. The set of edges is formed by connecting two vertices $p$ and $q$ if and only there exists and element $g \in G$ whose order is $pq$. 

The last important idea we will need is the notion of the Frobenius digraph, first introduced in \cite{solvable_groups}. The following definitions from that paper help us.

\begin{definition}
A group $H = QP$ is called \textit{Frobenius of type $(p, q)$} if it is a Frobenius group where the Frobenius complement $P$ is a $p$-group and the Frobenius kernel $Q$ is a $q$-group. 
\end{definition}

\begin{definition}
A group $H = P_1QP_2$ is called \textit{$2$-Frobenius of type $(p, q, p)$} if it is a $2$-Frobenius group where the subgroup $P_1Q$ is Frobenius of type $(q, p)$ and the quotient group $QP_2$ is Frobenius of type $(p, q)$. 
\end{definition}

Let $G$ be a solvable group. By \cite[Theorem A]{Hall-pi-Frobenius}, if $\edge{p}{q}$ is an edge in $\graphcomp(G)$, then the Hall $\{p, q\}$-subgroup must be Frobenius or $2$-Frobenius. We can use this fact to orient the edges of $\graphcomp(G)$. 

\begin{definition}
Let $G$ be a solvable group. The \textit{Frobenius digraph} of $G$, denoted $\digraph(G)$, is the orientation of $\graphcomp(G)$ where we orient the edge $p \to q$ if the Hall $\{p, q\}$-subgroup of $G$ is Frobenius of type $(p, q)$ or $2$-Frobenius of type $(p, q, p)$. 
\end{definition}

The authors of \cite{solvable_groups} then showed that the Frobenius digraph $\digraph(G)$ of a solvable group $G$ cannot contain directed $3$-paths. This, along with the Gallai-Hasse-Roy-Vitaver theorem (\cite[Theorem 7.17]{chromatic_graph_theory}) on graph colorings implies that $\graphcomp(G)$ is $3$-colorable. We will regularly exploit this fact about Frobenius digraphs and the Gallai-Hasse-Roy-Vitaver theorem in our proofs in this paper. For example, to prove a graph is $3$-colorable, we will use the fact that it suffices to find an orientation of the graph with no directed $3$-paths. 

In general, we will not be studying solvable groups $G$, so Hall $\pi$-subgroups may not always exist. But when a Hall $\{p,q\}$-subgroup $H_{pq}$ does exist, we can use a similar analysis, considering whether $H_{pq}$ can be Frobenius or $2$-Frobenius.

\section{Preliminaries}

For the convenience of the reader, we begin by restating two lemmas from \cite{A5_paper} and \cite{3-centralizers} for later use.

\begin{lemma}\label{lem:Hallsubgroups} 
\cite[Lemma 6.4]{A5_paper} Let $H$ be a Hall $\pi$-subgroup of $G$ for some $\pi \subseteq \pi(G)$. Suppose $G$ has a normal series $G = G_1.G_2.\cdots.G_m$. Then $H$ has a normal series $H = H_1.H_2.\cdots.H_m$ where each $H_i$ is a Hall $\pi$-subgroup of $G_i$.
\end{lemma}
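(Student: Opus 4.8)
The plan is to build the claimed normal series of $H$ by intersecting $H$ with the terms of the given series, and to identify the successive factors using two elementary facts about Hall subgroups that require no solvability hypothesis. I will write the given series as a chain of subgroups $1 = K_0 \trianglelefteq K_1 \trianglelefteq \cdots \trianglelefteq K_m = G$ with $G_i \cong K_i/K_{i-1}$ for each $i$, matching the ATLAS notation $G = G_1.G_2.\cdots.G_m$, and I will set $H_i' := H \cap K_i$. The goal is then to show that $1 = H_0' \trianglelefteq H_1' \trianglelefteq \cdots \trianglelefteq H_m' = H$ is a normal series of $H$ whose $i$-th factor $H_i := H_i'/H_{i-1}'$ is a Hall $\pi$-subgroup of $G_i$.

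The two facts I will state and justify first, since they are the engine of the argument, are the following, both valid for any finite group and any normal subgroup $K \trianglelefteq G$ with $H$ a Hall $\pi$-subgroup of $G$: (i) $H \cap K$ is a Hall $\pi$-subgroup of $K$; and (ii) $HK/K \cong H/(H \cap K)$ is a Hall $\pi$-subgroup of $G/K$. Both follow from a purely arithmetic index count: $|H \cap K|$ and $|HK/K|$ are $\pi$-numbers because they divide $|H|$, while the relevant indices $[K : H \cap K] = [HK : H]$ and $[G : HK]$ both divide the $\pi'$-number $[G : H]$. Note that solvability is never used, which is exactly why this lemma applies in our non-solvable setting.

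With these in hand, the normality claim is immediate: since $K_{i-1} \trianglelefteq K_i$, conjugation by any $h \in H_i' = H \cap K_i \subseteq K_i$ preserves both $K_{i-1}$ and $H$, hence preserves $H \cap K_{i-1} = H_{i-1}'$, so $H_{i-1}' \trianglelefteq H_i'$; if moreover each $K_i \trianglelefteq G$, the same computation with $h \in H$ gives $H_i' \trianglelefteq H$. For the factors I will argue in two steps. First, applying (i) to $K_i \trianglelefteq G$ shows $H_i' = H \cap K_i$ is a Hall $\pi$-subgroup of $K_i$. Then, working inside $K_i$ with its normal subgroup $K_{i-1}$ and using the identity $H_i' \cap K_{i-1} = H \cap K_{i-1} = H_{i-1}'$, fact (ii) yields that $H_i'/(H_i' \cap K_{i-1}) = H_i'/H_{i-1}'$ is a Hall $\pi$-subgroup of $K_i/K_{i-1} \cong G_i$, which is precisely the desired factor $H_i$. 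Equivalently, one can run an induction on $m$, splitting off the top factor $G_m \cong G/K_{m-1}$ via (i) and (ii) and applying the inductive hypothesis to the Hall $\pi$-subgroup $H \cap K_{m-1}$ of $K_{m-1}$.

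There is no serious obstacle here; this is a bookkeeping lemma. The only point requiring care is the two-step localization of the Hall property — first intersecting down to $K_i$, then passing to the quotient by $K_{i-1}$ — so that each factor is identified as a Hall $\pi$-subgroup of the correct section $G_i$ rather than of $G$ or of $K_i$. The compatibility of these two steps rests entirely on the identity $H \cap K_i \cap K_{i-1} = H \cap K_{i-1}$, which I will make explicit.
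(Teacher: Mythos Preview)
Your argument is correct and is exactly the standard proof of this fact: intersect the Hall subgroup with the terms of the given series and read off the factors using the two elementary index computations you state. There is nothing to compare against here, since the paper does not supply a proof of its own; the lemma is quoted verbatim from \cite[Lemma 6.4]{A5_paper} for the reader's convenience and used as a black box throughout.
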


\begin{lemma}\label{lem:extensions_of_A6} 
\cite[Proposition 3.2]{3-centralizers}
Any group of the form $G = P.\PSL(2, q)$, where $q > 5$ is odd and $P$ is a nontrivial $p$-group for some prime $p \neq 3$, contains an element of order $3p$.
\end{lemma}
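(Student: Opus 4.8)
The plan is to prove Lemma \ref{lem:extensions_of_A6}: any $G = P.\PSL(2,q)$ with $q>5$ odd and $P$ a nontrivial $p$-group, $p\neq 3$, contains an element of order $3p$.

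Let me think about this. We have a group $G$ with a normal $p$-subgroup $P$ and $G/P \cong \PSL(2,q)$. We want to find an element of order $3p$.

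Since $q > 5$ is odd, $\PSL(2,q)$ contains an element of order 3 (in fact $3 \mid |\PSL(2,q)|$ since $|\PSL(2,q)| = q(q-1)(q+1)/2$ and for $q > 5$, $3$ divides one of $q-1, q, q+1$). Actually, for $q$ odd, $\PSL(2,q)$ has an element of order 3 whenever $3 \mid |\PSL(2,q)|$. Let me verify: $|\PSL(2,q)| = \frac{q(q^2-1)}{2}$. For $3$ to divide this, we need $3 \mid q(q-1)(q+1)$, which always holds unless... well $q, q-1, q+1$ are three consecutive integers so one is divisible by 3. So $3 \mid |\PSL(2,q)|$ always. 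Good, so there's an element of order 3 in $\PSL(2,q)$.

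So the idea: lift an element of order 3 from the quotient to $G$, and use coprimality ($p \neq 3$) and the action on $P$ to produce an element of order $3p$.

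Strategy: Let $x \in G$ map to an element $\bar{x}$ of order 3 in $\PSL(2,q)$. By replacing $x$ with a suitable power (using coprime lifting — since $(3, p) $ might not be enough; actually we want an element whose image has order 3 and which itself is a 3-element). Actually since $P$ is a $p$-group with $p \neq 3$, by coprime considerations (Schur-Zassenhaus on the preimage of $\langle \bar x\rangle$, which is $P.C_3$), we can find an element $x$ of order exactly 3 in $G$ lifting $\bar x$.

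Now consider $H = P \rtimes \langle x \rangle = P.C_3$ where $\langle x \rangle \cong C_3$ acts on $P$ by conjugation. We want an element of order $3p$ in $H$. This reduces to: does $x$ centralize some nontrivial element of $P$? If $C_P(x) \neq 1$, then taking $z \in C_P(x)$ of order $p$, the element $xz = zx$ has order $\operatorname{lcm}(3,p) = 3p$, done.

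So the key claim is that $x$ (of order 3, acting coprimely on the $p$-group $P$, $p \neq 3$) has nontrivial fixed points. The obstacle: in general a coprime automorphism of order 3 can act fixed-point-freely (e.g., an order-3 Frobenius action). So $C_P(x)$ could be trivial.

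Here is where I would use more structure. The resolution must exploit that $x$ comes from $\PSL(2,q)$ and that $P$ is normal in all of $G$, not just in $H$. The right approach: consider the full group $G$ and argue that some $G$-conjugate of $x$, together with $P$, gives the element. More precisely, I would use a counting/averaging or a theorem on fixed points. A clean line: if every element of order 3 in $G$ acts fixed-point-freely on $P$, I derive a contradiction with the structure of $\PSL(2,q)$.

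My plan concretely: First reduce to the case $P$ elementary abelian by passing to $P/\Phi(P)$ (fixed points lift by coprimality — if $x$ acts with nontrivial fixed points on $P/\Phi(P)$ then it does on $P$, by a standard coprime fixed-point argument), so I view $P$ as a module for $\PSL(2,q)$ over $\F_p$. Then I want: the restriction of this module to a cyclic subgroup $\langle \bar x\rangle$ of order 3 has a trivial summand, i.e. $x$ has a nonzero fixed vector. Equivalently, I must rule out that $P$ is, as an $\langle \bar x \rangle$-module, a sum of nontrivial 1-dimensional (or 2-dimensional) modules. The hard part is ensuring a fixed vector exists for the order-3 element. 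I expect to handle this via the subgroup structure of $\PSL(2,q)$: an element of order 3 lies in a dihedral or other subgroup $S$, and I can invoke that some larger subgroup $S$ of $\PSL(2,q)$ containing $\bar x$ must have fixed points on any faithful-or-not $\F_p$-module by order/Brauer-character reasons, or that $3 \nmid p$ forces an appropriate trivial constituent via the Brauer character of the permutation-like module.

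I would therefore structure the proof as: (1) reduce to $P$ an irreducible $\F_p[\PSL(2,q)]$-module via a minimal normal subgroup / composition series argument, noting it suffices to find the element in $P_0.\PSL(2,q)$ for a single chief factor $P_0$; (2) pick an element $\bar x$ of order 3 and lift to $x$ of order 3 by Schur-Zassenhaus; (3) show $C_{P_0}(x) \neq 0$ using the representation theory of $\PSL(2,q)$ over $\F_p$ — this is the crux and is presumably where computation or cited character-theoretic facts enter, showing the trivial $\langle\bar x\rangle$-constituent always appears in any nontrivial $\F_p\PSL(2,q)$-module of the relevant type; and (4) conclude $xz$ has order $3p$ for $z \in C_{P_0}(x)$ of order $p$. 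The main obstacle is unquestionably step (3): guaranteeing a nonzero fixed vector for the order-3 element, since fixed-point-free actions are exactly what must be excluded, and this will require genuinely using that $\bar x$ sits inside $\PSL(2,q)$ rather than treating it as an arbitrary order-3 automorphism.
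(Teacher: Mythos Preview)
The paper does not prove this lemma at all: it is quoted verbatim from \cite[Proposition 3.2]{3-centralizers} and used as a black box throughout. So there is no ``paper's own proof'' to compare against.

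As for your outline, the reduction steps (Schur--Zassenhaus lifting to get an honest order-$3$ element $x$, passing to $P/\Phi(P)$ via coprime fixed-point lifting, and reducing to an irreducible $\F_p[\PSL(2,q)]$-module) are all sound and standard. You have correctly identified that the entire content of the lemma lives in your step~(3): showing that an element of order $3$ in $\PSL(2,q)$ cannot act fixed-point-freely on any nontrivial $\F_p$-module for $p \neq 3$. But you do not actually prove this; you only gesture at ``representation theory of $\PSL(2,q)$'' and ``Brauer-character reasons'' without giving an argument. Since a cyclic group of order $3$ certainly \emph{can} act fixed-point-freely on an $\F_p$-module, the whole force of the lemma is in exploiting the ambient $\PSL(2,q)$-structure, and that is precisely what you have left as a promissory note. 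So what you have written is a correct reduction to the hard core of the statement, not a proof of it.
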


We now prove some general technical lemmas about pseudo $T$-solvable groups which we use throughout the rest of the paper.

The proof of the following lemma follows similarly to \cite[Lemma 6.2]{A5_paper}, correcting some errors made in the proof of that lemma. Indeed, since $\Aut(A_5) = S_5$, then $\pi(A_5) = \pi(\Aut(A_5))$, so this corrected proof shows that \cite[Lemma 6.2]{A5_paper} still holds.

\begin{lemma}\label{lem:subgrouplemma}
Let $G$ be strictly pseudo $T$-solvable, where $T$ is non-abelian and simple, and $\pi(\Aut(T)) = \pi(T)$. Then $G$ contains a subgroup $K \cong N.(T \times H)$,
where $N$ is solvable and $H$ is solvable with order coprime to $|T|$. Furthermore, $\pi(K) = \pi(G)$. In particular, $\graphcomp(G)$ is obtained by removing edges from $\graphcomp(K)$.
\end{lemma}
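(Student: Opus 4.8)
The plan is to peel a single copy of $T$ off the top of $G$ and attach to it, as a direct factor, a solvable subgroup carrying every prime of $G$ that lies outside $\pi(T)$. First I would factor out the solvable radical $R$ of $G$ and set $\ol G=G/R$. Every composition factor of $\ol G$ is $T$ or cyclic, and $\ol G$ has trivial solvable radical, so $\Fit(\ol G)=1$ and the generalized Fitting subgroup $F^{*}(\ol G)$ is its layer. Each component has nonabelian simple quotient $\cong T$ and trivial centre (the centre would lie in $\Fit(\ol G)=1$), so $F^{*}(\ol G)=E\cong T^{k}$ with $k\ge 1$ (here $k\ge1$ since $T$ is a composition factor, so $\ol G\ne1$). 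As $C_{\ol G}(F^{*}(\ol G))\le F^{*}(\ol G)$ we get $C_{\ol G}(E)=Z(E)=1$, whence conjugation embeds $\ol G\hookrightarrow\Aut(T^{k})=\Aut(T)\wr S_{k}$. Write $B=\Aut(T)^{k}$ for the base group and $\rho\colon\ol G\to S_{k}$ for the permutation action on the factors. Because $\pi(\Aut T)=\pi(T)$, the base $B$ is a $\pi(T)$-group, so the only primes of $\ol G$ outside $\pi(T)$ enter through $\rho$.

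Next I would build the solvable direct factor. The structural point is that $G$ and all of its subquotients are $\pi(T)$-separable: each composition factor is either $T$, a $\pi(T)$-group, or cyclic of prime order, hence a $\pi(T)$- or a $\pi(T)'$-group. By the classical theory of $\pi$-separable groups, $\ol G$ then has a Hall $\pi(T)'$-subgroup $Y$; since $2\in\pi(T)$ this $Y$ has odd order, so it is solvable by the Feit--Thompson theorem, and $\pi(Y)=\pi(\ol G)\setminus\pi(T)$ is exactly the set of primes I must recover. It remains to make $Y$ centralize one copy of $T$. As $B$ is a $\pi(T)$-group and $Y$ a $\pi(T)'$-group, $Y\cap B=1$, so $\rho$ maps $Y$ isomorphically onto $\rho(Y)\le S_{k}$. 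Inside $B\rho(Y)=B\rtimes\rho(Y)$, both $Y$ and the pure permutation group $\rho(Y)$ are complements to the normal Hall subgroup $B$, and $\rho(Y)\cong Y$ is solvable, so by Schur--Zassenhaus they are conjugate by some $x$ in the wreath product. Since $E=\Inn(T)^{k}$ is normal in $\Aut(T)\wr S_{k}$, the copy $\Delta'=\Delta^{x}\cong T$ of the diagonal $\Delta=\{(t,\dots,t):t\in T\}$ still lies in $E\le\ol G$; and as $\rho(Y)$ centralizes $\Delta$, its conjugate $Y$ centralizes $\Delta'$. By coprimality $\Delta'\cap Y\le E\cap Y=1$, so $\Delta'\times Y\cong T\times Y$ is a subgroup of $\ol G$.

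Finally I would assemble $K$ and read off the graph statement. Let $K\le G$ be the full preimage of $\Delta'\times Y$ under $G\to\ol G$. Then $R\trianglelefteq K$ is solvable with $K/R\cong T\times Y$, so $K\cong N.(T\times H)$ with $N=R$ solvable and $H=Y$ solvable of order coprime to $|T|$; and $\pi(K)=\pi(R)\cup\pi(T)\cup\pi(Y)=\pi(R)\cup\pi(\ol G)=\pi(G)$. For the closing claim, $K\le G$ together with $\pi(K)=\pi(G)$ gives $V(\graphcomp(K))=V(\graphcomp(G))$, while any element of order $pq$ in $K$ is one in $G$, so $\graphcomp(G)$ arises from $\graphcomp(K)$ by deleting edges.

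The main obstacle is the second paragraph: capturing all of $\pi(G)\setminus\pi(T)$ inside a single solvable $\pi(T)'$-subgroup while simultaneously forcing that subgroup to centralize a genuine copy of $T$, so that the top quotient is an honest direct product $T\times H$ and no prime is lost. Both halves rely on $\pi(\Aut T)=\pi(T)$: it makes $B$ a $\pi(T)$-group, which is what yields $Y\cap B=1$ and lets Schur--Zassenhaus straighten $Y$ into pure permutations that fix a diagonal $T$. I expect this is exactly the point where the argument in \cite[Lemma 6.2]{A5_paper} went astray.
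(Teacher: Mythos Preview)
Your argument is correct, but it follows a genuinely different path from the paper's. The paper works with the normal subgroup $N$ sitting just below the lowest $T^{k}$ in a fixed chief series, takes a Hall $\pi(T)'$-subgroup $H$ of $G/N$, and lets $H$ act on $T^{k}$ by conjugation. Rather than passing to the full wreath product and conjugating, it restricts to a single $H$-orbit $L$ of the simple factors, observes that the stabilizer $B=N_{H}(X)$ of one factor $X$ has order coprime to $|\Aut(T)|$ and therefore acts trivially on $X$, and then invokes a centralizer lemma of Isaacs (\cite[Lemma 2.2]{ISAACS1996125}) to conclude $C_{L}(H)\cong C_{X}(B)=X\cong T$; the desired subgroup is $C_{L}(H)\times H$. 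Your route instead uses the solvable radical and $F^{*}$ to embed $\ol G$ into $\Aut(T)\wr S_{k}$ globally, then applies Schur--Zassenhaus conjugacy inside the wreath product to straighten the Hall $\pi(T)'$-subgroup $Y$ into pure permutations, which visibly centralize the diagonal copy of $T$. Both arguments pivot on $\pi(\Aut T)=\pi(T)$ at the same moment (coprimality of the Hall subgroup with the base of the wreath product), but the paper isolates one coordinate factor via Isaacs' lemma whereas you isolate the diagonal via conjugacy of complements. One minor remark: your appeal to Feit--Thompson for the solvability of $Y$ is unnecessary, since $Y$ is a $\pi(T)'$-group inside a pseudo $T$-solvable group, so all its composition factors are cyclic.
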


\begin{proof} Let $\pi = \pi(T)$. Since $G$ is pseudo $T$-solvable, $G$ is $\pi'$-separable. Thus, $G$ contains a Hall $\pi'$-subgroup $H$. 

Fix a chief series for $G$. Note that the chief factors of $G$ are elementary abelian or isomorphic to $T^k$ forsome positive integer $k$. First assume that the lowest chief factor is (isomorphic to) $T^k$. Since $T^k$ is normal in $G$, $H$ acts on $T^k$ by conjugation. By \cite[Corollary 3.3]{Bidwell}, $\Aut(T^k) = \Aut(T) \wr S_k$. In particular, $H$ has a permutation action on the $k$ copies of $T$. Let $X$ be one of these copies of $T$, and let $L$ be the direct product of the copies of $T$ in the orbit of $X$ in this permutation action of $H$. So $H$ acts on $L$ by conjugation. Let $B$ be the stabilizer of $X$ in the permutation action, so $B = N_H(X)$. Since $B$ is a subgroup of $H$, it has order coprime to $|\Aut(T)| = |\Aut(X)|$. Thus, $B$ must act trivially on $X$. So $C_X(B) = X$. By \cite[Lemma 2.2]{ISAACS1996125}, $C_L(H) \cong C_X(B) = X \cong T$. By considering orders, $H \cap C_L(H) = 1$. And $H$ centralizes $C_L(H)$, so $G$ has a subgroup $C_L(H) \times H \cong T \times H$. 

For an arbitrary group $G$, $T^{k}$ would not necessarily be the lowest chief factor. Let $N$ be the normal subgroup directly below $T^k$ in the chief series. Applying the above argument to $G/N$ will produce a subgroup $K^*\cong T\times H$ where $K^*\leq G/N$ and $H$ is a Hall $\pi'$-subgroup of $G/N$. The preimage of $K^*$ under the canonical homomorphism $G\to G/N$ is a subgroup $K$ of $G$ isomorphic to $N.(T\times H)$.

Note that $\pi(G/N) = \pi(T \times H)$ by construction of $H$. Thus, $\pi(K) = \pi(G)$, which implies $\graphcomp(G)$ is obtained by removing edges from $\graphcomp(K)$.
\end{proof}

The solvable group $H$ from Lemma \ref{lem:subgrouplemma} plays no important role in our later analysis, so we provide the following corollary to simplify the result. We remark that $H$ plays no important role in \cite{A5_paper} either, so a similar reduction is possible in that paper as well.

\begin{corollary}\label{cor:subgroupcor}
Let $G$ be strictly pseudo $T$-solvable, where $T$ is nonabelian and simple, and $\pi(\Aut(T)) = \pi(T)$. Then $G$ contains a subgroup $K \cong N.T$,
where $N$ is solvable and $\pi(K) = \pi(G)$. In particular, $\graphcomp(G)$ is obtained by removing edges from $\graphcomp(K)$.
\end{corollary}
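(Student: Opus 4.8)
The plan is to derive Corollary \ref{cor:subgroupcor} directly from Lemma \ref{lem:subgrouplemma} by essentially ``forgetting'' the coprime complement $H$. Since $G$ satisfies the hypotheses of the lemma, I would first invoke Lemma \ref{lem:subgrouplemma} to obtain a subgroup $K_0 \cong N.(T \times H)$ of $G$, where $N$ is solvable, $H$ is solvable of order coprime to $|T|$, and $\pi(K_0) = \pi(G)$. The goal is to replace this by a subgroup isomorphic to $\widetilde{N}.T$ for some solvable $\widetilde{N}$, while keeping the full prime set.

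The key observation is that $H$ itself is solvable, so I can absorb it into the solvable normal part. Concretely, I would consider the normal subgroup $M := N.H$ of $K_0$: inside $K_0 \cong N.(T \times H)$, the preimage of the factor $H \trianglelefteq T \times H$ under the quotient $K_0 \to T \times H$ is a normal subgroup containing $N$ with quotient $H$, hence an extension of the solvable group $N$ by the solvable group $H$, and is therefore solvable. Setting $K := K_0$ and $\widetilde{N} := M$, the quotient $K/\widetilde{N} \cong (T \times H)/H \cong T$, so $K \cong \widetilde{N}.T$ with $\widetilde{N}$ solvable, as required. Since $K = K_0$ as a subgroup of $G$, we retain $\pi(K) = \pi(K_0) = \pi(G)$.

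For the final sentence, I would note that because $K$ is a subgroup of $G$ with $\pi(K) = \pi(G)$, every edge $\edge{p}{q}$ of $\graphcomp(G)$ is also an edge of $\graphcomp(K)$: indeed, if there is no element of order $pq$ in $G$, then a fortiori there is none in the subgroup $K$, so the non-adjacency in $\Gamma(G)$ forces non-adjacency in $\Gamma(K)$. As the two graphs share the same vertex set $\pi(G) = \pi(K)$, this means $\graphcomp(G)$ is obtained from $\graphcomp(K)$ by deleting edges, which is exactly the claimed conclusion. (This last step is identical to the corresponding remark already made at the end of Lemma \ref{lem:subgrouplemma}.)

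I do not anticipate a genuine obstacle here, as this is a packaging corollary rather than a new argument; the only point requiring a little care is verifying that the preimage of $H$ is both normal in $K$ and solvable so that the extension structure $\widetilde{N}.T$ is legitimate, and that no primes are lost when discarding $H$ as a separate direct factor. Since $\pi(H) \subseteq \pi(G)$ is already accounted for within $\pi(K_0)$, folding $H$ into $\widetilde{N}$ changes neither the vertex set nor the subgroup $K$ itself, only the bookkeeping of its normal series.
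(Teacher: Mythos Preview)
Your proposal is correct and matches the paper's own proof essentially line for line: the paper likewise invokes Lemma~\ref{lem:subgrouplemma} to get $K \cong N_1.(T \times H)$, rewrites this as $K = N_1.H.T$, and sets $N = N_1.H$ to obtain the solvable-by-$T$ structure with $\pi(K) = \pi(G)$. Your added justification that the preimage of $H$ is normal and solvable is the only extra detail beyond what the paper records explicitly.
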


\begin{proof}
By Lemma \ref{lem:subgrouplemma}, $G$ has a subgroup $K \cong N_1.(T \times H)$, where $N_1$ and $H$ are solvable, and $\pi(K) = \pi(G)$. Thus, $K = N_1.H.T$, and setting $N = N_1.H$, we obtain $K = N.T$ where $N$ is solvable.
\end{proof}



\begin{lemma}\label{lem:onecopyofT}
Suppose $T$ is a non-abelian simple group and $p$ and $q$ are distinct primes dividing the order of $T$. If $\edge{p}{q}$ is an edge in $\graphcomp(G)$, then there is at most one copy of $T$ in the composition series of $G$.
\end{lemma}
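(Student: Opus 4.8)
The plan is to prove the contrapositive: if the composition series of $G$ contains at least two factors isomorphic to $T$, then $G$ has an element of order $pq$, so that $\edge{p}{q}$ is an edge of $\Gamma(G)$ and not of $\graphcomp(G)$. The whole argument runs on one elementary observation: since $p$ and $q$ both divide $|T|$ and $T$ is simple, any copy of $T$ contributes elements of orders $p$ and $q$, and any two \emph{commuting} elements of coprime orders $p$ and $q$ multiply to an element of order $pq$. I would also use the companion lifting fact: if $N \trianglelefteq G$ and $G/N$ has an element of order $pq$, then any preimage has order divisible by $pq$, and a suitable power of it has order exactly $pq$; hence it suffices to exhibit an element of order $pq$ in some quotient of $G$.

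I would induct on $|G|$ and take a minimal normal subgroup $M \cong S^k$ of $G$, splitting into three cases. If $S \not\cong T$ (in particular if $M$ is abelian), then $M$ contributes no factor isomorphic to $T$, so $G/M$ still has at least two such factors; induction and the lifting fact finish this case. If $S \cong T$ with $k \ge 2$, then $M \cong T^k$ contains $T \times T$, and an order-$p$ element of the first factor commutes with an order-$q$ element of the second, giving an element of order $pq$ already inside $M \le G$.

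The remaining case $M \cong T$ is the crux, since now the two copies of $T$ need not lie in a common direct product and commuting elements are not visible directly. Here I would pass to $C = C_G(M)$. As $T$ has trivial center, $M \cap C = 1$, so $MC = M \times C$ and every element of $C$ commutes with every element of $M$. The conjugation action embeds $G/C$ into $\Aut(T)$; since $\Out(T)$ is solvable by Schreier's theorem, $\Aut(T)$ has $T$ as a composition factor exactly once, and intersecting $G/C$ with $\Inn(T)$ (a proper subgroup of the simple group $\Inn(T) \cong T$ cannot have $T$ as a composition factor) shows $G/C$ has at most one composition factor isomorphic to $T$. Comparing the $T$-factors of $G$ with those distributed between the normal subgroup $C$ and the quotient $G/C$, and using that $G$ has at least two, I conclude that $C$ has at least one, so in particular $p \mid |C|$. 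Picking $x \in C$ of order $p$ by Cauchy and $y \in M$ of order $q$, the commuting coprime-order elements $x$ and $y$ yield $xy \in G$ of order $pq$.

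The main obstacle is exactly this last case: ensuring that the second copy of $T$ lands in the centralizing factor $C_G(M)$ rather than being absorbed by the outer action on $M$, which is precisely what the solvability of $\Out(T)$ — equivalently, that $\Aut(T)$ contains only a single $T$ among its composition factors — guarantees. The other two cases are routine once the lifting fact and the $T \times T$ observation are in hand.
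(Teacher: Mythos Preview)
Your proof is correct, and it takes a genuinely different route from the paper's. The paper argues via chief series: after observing (as you do) that no chief factor $T^k$ with $k\ge 2$ can occur, it notes that every chief factor isomorphic to $T$ is non-Frattini (the Frattini subgroup being nilpotent), and then invokes a structural theorem of Ballester-Bolinches which asserts the existence of normal subgroups $C\le R\trianglelefteq G$ with $R/C\cong T^n$, where $n$ is the total number of non-Frattini chief factors isomorphic to $T$. If $n\ge 2$, one reads off an element of order $pq$ inside $T^n$ directly.

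Your argument instead proceeds by induction on $|G|$, taking a minimal normal subgroup and, in the only nontrivial case $M\cong T$, passing to the centralizer $C=C_G(M)$. The key input is then Schreier's conjecture (solvability of $\Out(T)$), which bounds the number of $T$-composition factors of $G/C\hookrightarrow\Aut(T)$ by one and forces a copy of $T$---hence an element of order $p$---into $C$, where it commutes with $M$. The paper's route is shorter once the Ballester-Bolinches result is in hand and avoids any appeal to the classification; your route is more self-contained in that it uses only standard facts about centralizers and composition factors, at the cost of importing Schreier (a CFSG consequence). Either way the essential phenomenon is the same: two copies of $T$ cannot be stacked in $G$ without two of their Sylow subgroups being forced to commute somewhere.
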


\begin{proof}
If $G$ has no composition factors isomorphic to $T$, we are done, so suppose $G$ has a composition factor isomorphic to $T$. Take any chief series of $G$. In this chief series, every chief factor must be isomorphic to the direct product of several copies of the same simple group. We claim that if a chief factor is isomorphic to $T^k$ then $k = 1$. If $k > 1$, then $T^k$ has an element of order $pq$, and so $G$ does as well. This contradicts that fact that $\edge{p}{q}$ is an edge in $\graphcomp(G)$, and so $k = 1$. 

We now consider whether $T$ can be a Frattini chief factor. Since the Frattini subgroup of a finite group is nilpotent, any Frattini chief factor is abelian. Therefore, each chief factor isomorphic to $T$ is a non-Frattini chief factor. Since these factors are non-Frattini, then by \cite[Theorem A]{ballester-bolinches}, there exist normal subgroups $C, R \unlhd G$ such that $C \leq R$ and $R / C \cong T^n$. In particular, $n$ is the number of (non-Frattini) chief factors isomorphic to $T$. 

If $n > 1$, then there exists an element of order $pq$ in $T^n$ and therefore in $R \leq G$, a contradiction. However, $G$ has at least one composition factor isomorphic to $T$. Then $n = 1$ and so $G$ has exactly one chief factor isomorphic to $T$. Thus, $G$ must have exactly one composition factor isomorphic to $T$.
\end{proof}

The following lemma generalizes \cite[Theorem 6.6]{A5_paper}, and the proof follows a similar argument. 

\begin{lemma}\label{lem:remove_p-r_then_solvable}
Let $G$ be a pseudo $T$-solvable group. Let $T$ be a simple non-abelian group with $|\pi(T)|=3$ (i.e., $T$ is a $K_3$-group). Call these prime divisors $p$, $q$, and $r$. If $T$ contains subgroups $S_{pq}$ and $S_{qr}$ such that $\pi(S_{pq}) = \{p,q\}$ and $\pi(S_{qr}) = \{q,r\}$, then $\graphcomp(G) \setminus \{\edge{p}{r}\}$ is $3$-colorable and triangle-free.
\end{lemma}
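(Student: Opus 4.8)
The plan is to reduce to a group of the form $K = N.T$ with $N$ solvable and then play off against each other the two solvable subgroups obtained from $S_{pq}$ and $S_{qr}$. First I would dispose of the solvable case: if $G$ has no composition factor isomorphic to $T$ it is solvable, so by Lemma \ref{lem:solvablegroups} $\graphcomp(G)$ is already triangle-free and $3$-colorable, and deleting $\edge{p}{r}$ preserves both. Otherwise $G$ has $T$ as a composition factor, hence is strictly pseudo $T$-solvable, and since $\pi(\Aut(T))=\pi(T)$ for every $K_3$-group (Table \ref{table:1}), Corollary \ref{cor:subgroupcor} produces $K\cong N.T$ with $N$ solvable and $\pi(K)=\pi(G)$, where $\graphcomp(G)$ is obtained from $\graphcomp(K)$ by deleting edges. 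As deleting edges neither creates triangles nor raises the chromatic number, it suffices to treat $\graphcomp(K)\setminus\{\edge{p}{r}\}$. Put $\pi_0=\pi(N)$ and let $U:=N.S_{pq}$, $W:=N.S_{qr}$ be the preimages in $K$; these are solvable (extensions of $N$ by a $\{p,q\}$- resp.\ $\{q,r\}$-group, solvable by Burnside), and since $U,W\le K$ we have $\graphcomp(K)|_{\pi(U)}\subseteq\graphcomp(U)$ and $\graphcomp(K)|_{\pi(W)}\subseteq\graphcomp(W)$, both triangle-free and $3$-colorable by Lemma \ref{lem:solvablegroups}.

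If $r\in\pi_0$ then $\pi(U)=\pi(K)$ and $\graphcomp(K)\subseteq\graphcomp(U)$ is already triangle-free and $3$-colorable; symmetrically if $p\in\pi_0$. So I may assume $p,r\notin\pi_0$, whence $\pi(U)=\pi_0\cup\{p,q\}$ and $\pi(W)=\pi_0\cup\{q,r\}$ overlap exactly in the \emph{core} $\pi_0\cup\{q\}$, with $p$ the only prime outside $\pi(W)$ and $r$ the only prime outside $\pi(U)$. Then every edge of $\graphcomp(K)$ other than $\edge{p}{r}$ has both endpoints inside $\pi(U)$ or inside $\pi(W)$. Triangle-freeness is now immediate: a triangle of $\graphcomp(K)\setminus\{\edge{p}{r}\}$ lying wholly in $\pi(U)$ or in $\pi(W)$ would contradict Lemma \ref{lem:solvablegroups} for $U$ or $W$, and any other triangle would use both $p$ and $r$, hence the deleted edge.

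The remaining, and essential, task is $3$-colorability, for which I would build an orientation of $\graphcomp(K)\setminus\{\edge{p}{r}\}$ with no directed $3$-path and apply the Gallai--Hasse--Roy--Vitaver theorem. On the core I orient edges by the Frobenius digraph of the common solvable subgroup $N.Q$, where $Q$ is a Sylow $q$-subgroup of $T$ chosen (after conjugating $S_{qr}$) inside $S_{pq}\cap S_{qr}$. A key preliminary is a consistency check: for a core edge $\edge{a}{b}$ one computes $|N.Q|_{\{a,b\}}=|U|_{\{a,b\}}=|W|_{\{a,b\}}$, so the Hall $\{a,b\}$-subgroups formed inside $U$ and inside $W$ are both conjugate in $K$ to the Hall $\{a,b\}$-subgroup of $N.Q$, and hence $\digraph(U)$, $\digraph(W)$, and $\digraph(N.Q)$ assign the edge the same direction. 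I would then orient the edges at $p$ by $\digraph(U)$ and those at $r$ by $\digraph(W)$; by construction the resulting digraph restricts to $\digraph(U)$ on $\pi(U)$ and to $\digraph(W)$ on $\pi(W)$, so neither half contains a directed $3$-path.

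The main obstacle is to exclude directed $3$-paths that \emph{cross} between the halves, i.e.\ paths using both $p$ and $r$ (non-consecutively, as $\edge{p}{r}$ is gone), such as $p\to a\to b\to r$ with $a,b$ in the core. Such a path is invisible to either solvable half alone, and at the purely graph-theoretic level the amalgamation can genuinely fail, since two triangle-free $3$-colorable graphs sharing a common induced subgraph can glue to a triangle-free $4$-chromatic graph; so the exclusion must use group structure. The relevant feature is that $p$ and $r$ are primes of the simple group $T$ with no common solvable overgroup containing both (the hypotheses supply $S_{pq}$ and $S_{qr}$ but deliberately \emph{not} an $S_{pr}$). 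I expect to show that a crossing directed $3$-path forces a chain of Frobenius actions linking a Sylow $p$- and a Sylow $r$-subgroup of $T$ through $N$, which the shared structure $N.Q\le U\cap W$ cannot support, thereby reducing the situation back to the no-directed-$3$-path property inside the honestly solvable groups $U$ and $W$. Carrying out this last exclusion by a case analysis of the admissible Frobenius and $2$-Frobenius types along the path is where the real work lies.
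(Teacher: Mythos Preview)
Your setup matches the paper's almost exactly: reduce to $K=N.T$ via Corollary~\ref{cor:subgroupcor}, form the two solvable subgroups $U=N.S_{pq}$ and $W=N.S_{qr}$, dispose of the cases $p\mid|N|$ or $r\mid|N|$, and then orient $\graphcomp(K)\setminus\{\edge{p}{r}\}$ by gluing the Frobenius digraphs of $U$ and $W$. Your triangle-freeness argument is fine.

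The gap is precisely where you say it is, but the resolution is much simpler than you anticipate and does not require the kind of ``chain of Frobenius actions through $N$'' analysis you sketch. The missing observation is this: for any core prime $b\notin\{p,q,r\}$ and any $a\in\{p,q,r\}$, the edge $\edge{a}{b}$ (if present) is \emph{always} oriented $a\to b$. Indeed, by Lemma~\ref{lem:Hallsubgroups} the Hall $\{a,b\}$-subgroup of $U$ (or $W$) has the form $B.A$ with $B$ a Hall subgroup of $N$ and $A$ a Sylow $a$-subgroup of $S_{pq}$ (or $S_{qr}$); since this group has an $a$-group as quotient, it must be Frobenius of type $(a,b)$ or $2$-Frobenius of type $(a,b,a)$, forcing $a\to b$. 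Thus $p$, $q$, $r$ are all sources relative to the core, and your feared path $p\to a\to b\to r$ simply cannot occur: the arc $b\to r$ does not exist.

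Once you know this, any directed $3$-path containing both $p$ and $r$ must route through $q$, and (up to the $p\leftrightarrow r$ symmetry) the only possibility is $p\to q\to r\to b$ with $b$ in the core. The paper handles this by a small trick: flip the orientation of $\edge{q}{r}$ to $r\to q$. This kills the offending path, and any new $3$-path $r\to q\to b\to c$ it might create would force $p\to q\to b\to c$ inside $\digraph(U)$, contradicting that $U$ is solvable. So the ``real work'' you anticipate collapses to a one-line case check once the source property of $p,q,r$ is in hand.
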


\begin{proof}
Note that if $G$ is solvable, then $\graphcomp(G)$ is $3$-colorable and triangle-free by Lemma \ref{lem:solvablegroups}. So we may assume $G$ is strictly pseudo $T$-solvable. 
Now all eight $K_3$-groups $T$ fulfill the condition that $\pi(\Aut(T)) = \pi(T)$. Thus Corollary \ref{cor:subgroupcor} can be applied to $G$ to find a subgroup $K \cong N.T$, where $N$ is solvable and $\pi(G) = \pi(K)$. Let $K_1 = N.S_{pq}$ and $K_2 = N.S_{qr}$ be subgroups of $K$. Notice that $S_{pq}$ and $S_{qr}$ are solvable by Burnside's Theorem. Thus $K_1$ and $K_2$ are solvable, and $\graphcomp(K_1)$ and $\graphcomp(K_2)$ are $3$-colorable and triangle-free by Lemma \ref{lem:solvablegroups}. If $r \mid |N|$, then $\pi(K_1) = \pi(G)$ and so $\graphcomp(G)$ is a subgraph of $\graphcomp(K_1)$. Therefore, $\graphcomp(G)$ is $3$-colorable and triangle free because these properties are closed under removing edges. Similarly if $p \mid |N|$ then $\graphcomp(G)$ is a subgraph of $\graphcomp(K_2)$ and thus $3$-colorable and triangle free.

Now assume $p,r \nmid |N|$. Then $\pi(K_1)=\pi(K) \setminus \{r\}$ and $\pi(K_2)=\pi(K) \setminus \{p\}$. Therefore any triangle in $\graphcomp(K)$ must include both the $r$ and $p$ vertices. Thus removing the edge $\edge{p}{r}$ will remove any triangles in $\graphcomp(K)$ and by extension in $\graphcomp(G)$. Next we will induce an orientation on $\graphcomp(K)$ and show that there are no $3$-paths. Assign the orientation to $\graphcomp(K)$ by taking the orientations of the Frobenius digraphs $\digraph(K_1)$ and/or $\digraph(K_2)$. We claim this orientation is well-defined. Let $a, b \in \pi(K)$, and suppose $\edge{a}{b} \in \graphcomp(K)$. It suffices to show that the orientations of $\edge{a}{b}$ in $\digraph(K_1)$ and $\digraph(K_2)$ coincide. There are several cases.
\begin{enumerate}
    \item $a,b \notin \{p,q,r\}$. In this case, the Hall $\{a,b\}$-subgroups of $K_1$ and $K_2$ are both isomorphic to the Hall $\{a,b\}$-subgroup of $N$, so their Frobenius actions coincide. Thus the orientation is the same in $\digraph(K_1)$ and $\digraph(K_2)$, so the orientation of the edge $\edge{a}{b}$ in $\graphcomp(K)$, if it exists, is well-defined.
    \item $a=q$ and $b \notin \{p,q,r\}$. By Lemma \ref{lem:Hallsubgroups}, the Hall $\{q,b\}$-subgroup of $K_1$ (or respectively $K_2$) is of the form $A.Q$ where $A$ is the Hall $\{q,b\}$-subgroup of $N$ and $Q$ is the Sylow $q$-subgroup of $S_{pq}$ (or respectively $S_{qr}$). Since this Hall subgroup has a quotient which is a $q$-group, it must be Frobenius of type $(q,b)$ or $2$-Frobenius of type $(q,b,q)$. Therefore the orientation must be $q \to b$.
    \item $a=p$ and $b \notin \{p,q,r\}$. In this case, the edge $\edge{p}{b}$ is only in $\digraph(K_1)$ and thus the orientation is unambiguously defined. By Lemma \ref{lem:Hallsubgroups}, the Hall $\{p,b\}$-subgroup of $K_1$ is $B.P$ where $B$ is the Sylow $b$-subgroup of $N$ and $P$ is the Sylow $p$-subgroup of $S_{pq}$. By the argument in (2), the orientation must be $p \to b$.
    \item $a=r$ and $b \notin \{p,q,r\}$. In this case, the edge $\edge{r}{b}$ is only in $\digraph(K_2)$ and thus the orientation is unambiguously defined. By Lemma \ref{lem:Hallsubgroups}, the Hall $\{r,b\}$-subgroup of $K_2$ is $B.R$ where $B$ is the Sylow $b$-subgroup of $N$ and $R$ is the Sylow $p$-subgroup of $K_2$. By the argument in (2), the orientation must be $r \to b$.
    \item $a = p$ and $b = q$. In this case, the edge $\edge{p}{q}$ is only in $\digraph(K_1)$, so the orientation is unambiguous. The exact orientation depends on the structure of $S_{pq}$.
    \item $a = q$ and $b = r$. In this case, the edge $\edge{q}{r}$ is only in $\digraph(K_2)$, so the orientation is unambiguous. The exact orientation depends on the structure of $S_{qr}$.
\end{enumerate}
Therefore, this orientation of $\graphcomp(K) \setminus \{\edge{p}{r}\}$, which we denote $\digraph(K)$, is well-defined. 

Notice that any directed $3$-path must involve both $p$ and $r$. Otherwise, we would find a $3$-path in either $\digraph(K_1)$ or $\digraph(K_2)$. Examine the edges between the vertices $p, q, r$. (Below 3-path will mean ``directed 3-path''.)
    \begin{enumerate}
        \item If the orientation is $p \leftarrow q \rightarrow r$, then the only paths directed into $p$ and $r$ come from $q$, so any $3$-path would need to contain both $q \to p$ and $q \to r$. But since $q$ is a source, both segments cannot be in the same $3$-path. Therefore, there are no $3$-paths.
        \item Suppose the orientation is $p \rightarrow q \leftarrow r$, or only one or none of these edges exist. Then $p$ and $r$ are both sources and so they both must initiate any path they are a part of. Thus no $3$-path can contain both of them, so there are no $3$-paths.
        \item The two cases $p \leftarrow q \leftarrow r$ and $p \rightarrow q \rightarrow r$ are the same up to symmetry and thus we will focus exclusively on the latter. Any $3$-path in this case would need to be of the form $p \to q \to r \to a$. In this case, flip the $\edge{q}{r}$ edge to make it $q \leftarrow r$. This removes the $3$-path. The shrewd reader may object that this could induce a new $3$-path of the form $r \to q \to b \to c$. However, this would imply that $p \to q \to b \to c \in \digraph(K_1)$, but $\digraph(K_1)$ has no $3$-paths. Thus no $3$-paths can exist at all after flipping the $\edge{q}{r}$ orientation. 
        \item If $\graphcomp(K)$ contains only one or none of the edges $p-q-r$, then $\graphcomp(K)$ is a subgraph of case (1) or (2). Since neither of those cases have $3$-paths, then $\graphcomp(K)$ cannot have $3$-paths either.
    \end{enumerate}
Finally we see that in all possible scenarios, $\digraph(K)$ has no $3$-paths. Thus $\graphcomp(K) \setminus \{\edge{p}{r}\}$ is $3$-colorable, and so is $\graphcomp(G) \setminus \{\edge{p}{r}\}$.
\end{proof}

As Table \ref{table:1} in the Appendix below in Section 10 illustrates, all of the $K_3$-groups satisfy the assumptions of Corollary \ref{cor:subgroupcor} and Lemmas \ref{lem:onecopyofT} and \ref{lem:remove_p-r_then_solvable}. In particular, Lemma \ref{lem:remove_p-r_then_solvable} provides a necessary condition for a graph to be the prime graph of a pseudo $T$-solvable group, where $T$ is a $K_3$-group.

The following representation theory lemma will also be useful.

\begin{lemma}\label{lem:rep_theory_biz}
Let $G = N.T$ where $N$ and $T$ are finite groups and $(|N|, |T|) = 1$. Let $p \in \pi(T)$ and $q \in \pi(N)$. If there exists an element $t \in T$ of order $p$ such that for every irreducible representation $\rho: T \to \GL(n, \C)$, $\rho(t)$ has a fixed point, then there exists an element of order $pq$ in $G$.
\end{lemma}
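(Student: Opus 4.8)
The plan is to combine the Schur--Zassenhaus theorem, the coprime action machinery, and a transfer of the fixed-point hypothesis from characteristic $0$ to characteristic $q$. Since $(|N|,|T|)=1$ and $N \trianglelefteq G$, Schur--Zassenhaus provides a complement, so I would write $G = N \rtimes T$ and regard $T$, and in particular the given element $t$ of order $p$, as acting coprimely on $N$ by conjugation; note $p \neq q$ because $p \mid |T|$ and $q \mid |N|$ are coprime. The entire problem then reduces to showing that $q \mid |C_N(t)|$: granting this, Cauchy's theorem produces $x \in C_N(t)$ of order $q$, and since $x$ and $t$ commute and have coprime orders, $xt$ has order $pq$, yielding the desired element of $G$.

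To locate a fixed point of order $q$, I would first replace $N$ by a single $q$-group still acted on by $T$. Because $T$ acts coprimely on $N$, the coprime action theory guarantees a $T$-invariant Sylow $q$-subgroup $Q$ of $N$ (this needs no solvability assumption on $N$), and $Q \neq 1$ as $q \mid |N|$. The Frattini subgroup $\Phi(Q)$ is characteristic, hence $T$-invariant, so $W := Q/\Phi(Q)$ is a nonzero elementary abelian $q$-group carrying a $T$-action, i.e.\ a nonzero $\F_q T$-module. The goal now becomes to show $C_W(t) \neq 0$, after which a coprime-action argument will pull the fixed point back down into $Q$.

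The crux is transferring the hypothesis, stated for complex representations, to the $\F_q T$-module $W$; this is where coprimality enters in its second guise. Since $q \nmid |T|$, the algebra $\F_q T$ is semisimple by Maschke, so $W$ decomposes as a direct sum of irreducible $\F_q T$-modules, each splitting further over $\overline{\F_q}$ into absolutely irreducible modules. Coprimality also removes all modular obstructions: every element of $T$ is $q$-regular, the irreducible $\overline{\F_q} T$-modules correspond to the ordinary irreducible $\C T$-modules with identical character values, and reduction modulo $q$ permutes the $p$-th roots of unity bijectively while fixing $1$. Hence for each irreducible constituent the multiplicity of the eigenvalue $1$ of $t$, computable as $\tfrac1p\sum_{j=0}^{p-1}\chi(t^{j})$ from the character, is the same in characteristic $q$ as in characteristic $0$. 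The hypothesis that $\rho(t)$ has a fixed point for every complex irreducible $\rho$ therefore forces every $\overline{\F_q} T$-constituent of $W$ to contain a nonzero $t$-fixed vector, so $C_W(t) \neq 0$. I expect this characteristic-$q$ transfer to be the main obstacle, as it is the only point where the complex hypothesis must be matched against a genuinely modular module.

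Finally I would descend from $W$ back to $Q$. Since $\langle t\rangle$ has order $p$ coprime to $|Q|$, the standard coprime-action quotient formula gives $C_{Q/\Phi(Q)}(t) = C_Q(t)\Phi(Q)/\Phi(Q)$. As the left-hand side $C_W(t)$ is nonzero, $C_Q(t) \not\subseteq \Phi(Q)$, so in particular $C_Q(t) \neq 1$. Thus $C_Q(t)$ is a nontrivial $q$-subgroup contained in $C_N(t)$, whence $q \mid |C_N(t)|$, completing the reduction of the first paragraph and producing an element of order $pq$.
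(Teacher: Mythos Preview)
Your proof is correct and follows the same broad outline as the paper's---Schur--Zassenhaus, pass to a $T$-invariant Sylow $q$-subgroup, reduce to an elementary abelian $\F_qT$-module, and transfer the fixed-point hypothesis across characteristics---but the individual steps use lighter tools. Where the paper invokes the Hartley--Turull lemma to replace $Q$ by an elementary abelian group with an equivalent permutation action, you simply take the Frattini quotient $Q/\Phi(Q)$ and afterwards pull fixed points back via the coprime formula $C_{Q/\Phi(Q)}(t)=C_Q(t)\Phi(Q)/\Phi(Q)$. Where the paper passes to a faithful quotient, applies a lemma of Robinson to reach absolutely irreducible modules, and then cites a general lifting fact to view the module over $\C$, you compute the eigenvalue-$1$ multiplicity of $t$ directly as $\tfrac{1}{p}\sum_{j=0}^{p-1}\chi(t^{j})$ and use that, since $q\nmid|T|$, Brauer characters coincide with ordinary characters, so this count is unchanged in characteristic $q$. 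Your route is more self-contained and avoids both the Hartley--Turull and Robinson lemmas; the paper's heavier machinery would pay dividends if one needed to match entire permutation actions rather than just fixed-point dimensions, but for the present lemma that extra strength is never used.
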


\begin{proof}
Since $(|N|, |T|) = 1$, by the Schur-Zassenhaus theorem, the extension $G = N.T$ splits, so $T$ acts on $N$. In order to find an element of order $pq$ in $G$, it suffices to find an element of order $p$ in $T$ which fixes an element of order $q$ in $N$.

As a property of coprime actions, there is a Sylow $q$-subgroup $Q \leq N$ which is invariant under the $T$-action. So we may consider the subgroup $Q \rtimes T \leq G$. We may apply the Hartley-Turull lemma (\cite[Lemma 2.6.2]{Hartley1994}) to see that $T$ acts on an elementary abelian $q$-group $V$ such that the action of $T$ on $Q$ is equivalent to the action of $T$ on $V$. So it suffices to show that some order $p$ element of $T$ fixes a nontrivial element of $V$.

Since $V$ is elementary abelian, we may view it as an $\F_qT$-module. By Maschke's theorem, we may decompose $V$ into the direct sum of irreducible $\F_qT$-modules. Let $W$ be one such direct summand. If $K$ is the kernel of the action of $T$ on $W$, then $T / K$ acts faithfully and irreducibly on $W$. Let $t \in T$ be the order $p$ element of $T$ such that for every irreducible $\rho: T \to \GL(n, \C)$, $\rho(t)$ has a fixed point. If $t \in K$, then we immediately retrieve an element of order $pq$ and we are done. So assume $t \notin K$. If $\phi: T \to T/K$ is the natural projection with kernel $K$, then $\phi(t)$ has order $p$. Also, since $T/K$ is a quotient of $T$, then for every irreducible representation $\sigma: T/K \to \GL(m, \C)$, $\sigma(\phi(t))$ acts with a fixed point. So it suffices to show that $W$ has an order $q$ element that is fixed by $\phi(t)$ in the action of $T/K$ on $W$.

Abbreviate $S = T/K$ and $s = \phi(t)$. Since $W$ is a faithful irreducible $\F_qS$-module, we may apply \cite[Lemma 10]{ROBINSON19961143}, which states that if $k = \on{End}_{\F_q S}(W)$, then the irreducible summands of $W \otimes k$ are absolutely irreducible, and that the permutation action of $S$ on any such summand is equivalent to the permutation action of $S$ on $W$. Let $U$ be one such summand. Since the action of $S$ on $U$ is coprime, and $U$ is absolutely irreducible, it is well known that $U$ may be viewed as a module over the complex numbers. (For example, this fact can be extracted from \cite{RepTheoryBook}.) 

Since $s$ acts with a fixed point under every irreducible complex representation of $S$, $s$ must act on $U$ with a fixed point. As discussed above, this entails that $s = \phi(t)$ must act on $W$ with a fixed point as well. Then, $t$ acts on $W$ with a fixed point, and as discussed above, thus $T$ acts on $Q$ with a fixed point. Thus, we retrieve an element of order $pq$ in $G$ and we are done.
\end{proof}

The general trajectory of the rest of the paper is to address each of the $K_3$-groups $T$ (aside from $A_5$) and determine necessary and sufficient conditions for when a graph $\Gamma$ is the prime graph of a pseudo $T$-solvable group. Each $K_3$-group has its own section, aside from $\PSL(2, 7)$ and $U_3(3)$, which we address together, and $A_6$ and $U_4(2)$, which we also address together.

\section{The Projective Special Linear Group $\PSL(2,7)$ and the Unitary Group $U_3(3)$}\label{section:psl27}

In this section, we fully classify the prime graphs of pseudo $\PSL(2,7)$-solvable groups, adapting the methodology of \cite{A5_paper} and \cite{A5classification}. First we establish that for an arbitrary pseudo $\PSL(2,7)$-solvable group $G$, removing the $\edge{2}{7}$ edge from $\graphcomp(G)$ yields a 3-colorable, triangle-free graph. We then work toward showing that the only possible triangle in $\graphcomp(G)$ is $\{2,3,7\}$. We show that when this triangle occurs, $2$ and $3$ cannot be connected to any other primes in $\graphcomp(G)$, while $7$ can connect to other vertices. The final result is that $\graphcomp(G)$ must be $3$-colorable and contain either no triangles or exactly one $\{2,3,7\}$ triangle where $7$ is the only vertex which can connect to the rest of the graph, and in this case, a valid $3$-coloring exists where all the vertices which $7$ connects to, except $2$ and $3$, are of the same color. We also show that given such a graph, one can construct a pseudo $\PSL(2,7)$-solvable group such that $\graphcomp(G)$ is isomorphic to the given graph by following a similar method to that used in \cite[Theorem 2.8]{solvable_groups} and \cite[Theorem 3.1]{A5classification}.

We also find as an immediate result of our classification that prime graph complements of pseudo $U_3(3)$-solvable groups must be $3$-colorable and triangle-free. We leverage the fact that $\PSL(2, 7) \leq U_3(3)$, as well as the fact that $U_3(3)$ has an element of order $6$.

As a first step in our classification, we find a necessary condition for a graph to be the prime graph of a pseudo $\PSL(2,7)$-solvable group. 

\begin{theorem}\label{thm:psuedopsl27primegraphs}
Let $G$ be pseudo $\PSL(2,7)$-solvable. Then $\graphcomp(G) \setminus \{\edge{2}{7}\}$ is $3$-colorable and triangle-free.
\end{theorem}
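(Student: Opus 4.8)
The plan is to invoke Lemma~\ref{lem:remove_p-r_then_solvable} with the $K_3$-group $T = \PSL(2,7)$. The three prime divisors of $|\PSL(2,7)| = 168 = 2^3 \cdot 3 \cdot 7$ are $2$, $3$, and $7$, and I want to identify the roles $p$, $q$, $r$ so that the excluded edge $\edge{p}{r}$ is exactly $\edge{2}{7}$. This forces the choice $p = 2$, $r = 7$, and $q = 3$ (the ``middle'' prime). The lemma then yields that $\graphcomp(G) \setminus \{\edge{2}{7}\}$ is $3$-colorable and triangle-free, which is precisely the claim. So the entire content of the theorem reduces to verifying the hypotheses of Lemma~\ref{lem:remove_p-r_then_solvable} for this group and this labeling.

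The hypotheses I must check are: first, that $G$ is pseudo $T$-solvable with $T = \PSL(2,7)$ a $K_3$-group, which is given; and second, that $\PSL(2,7)$ contains subgroups $S_{pq} = S_{23}$ and $S_{qr} = S_{37}$ with $\pi(S_{23}) = \{2,3\}$ and $\pi(S_{37}) = \{3,7\}$. The crux of the proof is therefore a subgroup-existence verification inside $\PSL(2,7)$. First I would exhibit a subgroup whose order involves exactly the primes $2$ and $3$: since $\PSL(2,7) \cong \PSL(3,2)$ acts on the Fano plane and contains $S_4$ as a point stabilizer (of order $24 = 2^3 \cdot 3$), the group $S_4$ serves as $S_{23}$. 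Next I would exhibit a subgroup whose order involves exactly $3$ and $7$: the normalizer of a Sylow $7$-subgroup in $\PSL(2,7)$ is a Frobenius group of order $21 = 3 \cdot 7$, giving $S_{37}$. Both orders use only the prescribed primes, so the hypotheses hold.

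With these two subgroups in hand, Lemma~\ref{lem:remove_p-r_then_solvable} applies directly and delivers the conclusion. I do not expect any genuine obstacle here: the argument is essentially a citation of the already-proved general lemma together with a short structural fact about $\PSL(2,7)$. The one point requiring a moment's care is the correct assignment of $p$, $q$, $r$ to the three primes so that the removed edge matches $\edge{2}{7}$ and so that $3$ plays the role of the common prime $q$ shared by both subgroups; any other assignment would either fail to produce the needed subgroups or remove the wrong edge. Once that bookkeeping is fixed, the proof is immediate, and it is worth noting (as the lemma already records) that this gives a \emph{necessary} condition on $\graphcomp(G)$, which subsequent results in this section will sharpen to pin down the unique possible triangle $\{2,3,7\}$.
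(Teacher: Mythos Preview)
Your proof is correct and follows essentially the same approach as the paper: set $p=2$, $q=3$, $r=7$ and invoke Lemma~\ref{lem:remove_p-r_then_solvable} with suitable $\{2,3\}$- and $\{3,7\}$-subgroups of $\PSL(2,7)$. The only cosmetic difference is that the paper uses $A_4$ rather than $S_4$ for $S_{23}$ (and names the order-$21$ group $F_{21}$), but either choice satisfies the hypothesis $\pi(S_{23})=\{2,3\}$, so this is immaterial.
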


\begin{proof}
Let $p = 2, q = 3, r = 7$. Apply Lemma \ref{lem:remove_p-r_then_solvable} with the subgroups $A_4, F_{21} \leq \PSL(2, 7)$, where $F_{21}$ is the Frobenius group of order $21$.
\end{proof}

From this proof we can define the Frobenius digraph of a pseudo $\PSL(2,7)$-solvable group. 

\begin{definition}
If $G$ is a pseudo $\PSL(2,7)$-solvable group, we say its \textit{Frobenius digraph} $\digraph(G)$ is the orientation of $\graphcomp(G)$ given by the orientation of $\digraph(K)$ as defined in the proof of Theorem \ref{thm:psuedopsl27primegraphs} (which is provided in the proof of Lemma \ref{lem:remove_p-r_then_solvable}). In addition, if there is an edge $\edge{2}{7}$ in $\graphcomp(G)$, we direct it by $7 \rightarrow 2$. 
\end{definition}

By the proof of Lemma \ref{lem:remove_p-r_then_solvable}, if the edges $\edge{2}{3}$ and/or $\edge{3}{7}$ are in $\graphcomp(G)$, the orientations must be $2 \leftarrow 3 \rightarrow 7$. The choice of $7 \rightarrow 2$ over $2 \rightarrow 7$ is somewhat arbitrary, but it helps for consistency in the proof of Theorem \ref{thm:PSL_classification}. 

Our next step is to investigate the conditions under which such triangles might exist. Theorem \ref{thm:psuedopsl27primegraphs} tells us that if $\graphcomp(G)$ has a triangle, it must involve the edge $\edge{2}{7}$. Since the prime graph of $\PSL(2, 7)$ itself is complete, it is clear that a $\{2, 3, 7\}$ triangle can exist. In fact, we will show that for any prime $p \neq 3$, a $\{2, 7, p\}$ triangle cannot exist.

\begin{proposition}\label{prop:2-p}
Let $G$ be a strictly pseudo $\PSL(2,7)$-solvable group, and let $p$ be a prime such that $p \notin \{2,3,7\}$. If $\edge{2}{p}$ is an edge in $\graphcomp(G)$, then $\graphcomp(G)$ is $3$-colorable and triangle-free.
\end{proposition}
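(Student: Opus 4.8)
The plan is to pass to the concrete subgroup supplied by Corollary~\ref{cor:subgroupcor} and show that the hypothesis $\edge{2}{p}\in\graphcomp(G)$ (equivalently, $G$ has no element of order $2p$) forces $2$ or $7$ to divide the order of the solvable part; both desired conclusions then drop out at once from a solvable overgroup. Since $G$ is strictly pseudo $\PSL(2,7)$-solvable and $\pi(\Aut(\PSL(2,7)))=\pi(\PSL(2,7))$, Corollary~\ref{cor:subgroupcor} gives $K\cong N.T$ with $T=\PSL(2,7)$, $N$ solvable, $\pi(K)=\pi(G)$, and $\graphcomp(G)$ a subgraph of $\graphcomp(K)$; as $K\le G$, $K$ also has no element of order $2p$. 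I would then split on whether $2$ or $7$ divides $|N|$.

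In the easy branch, suppose $7\mid|N|$. Using $A_4\le T$, the subgroup $K_1=N.A_4$ is solvable (Burnside) with $\pi(K_1)=\pi(N)\cup\{2,3\}=\pi(G)$, so $\graphcomp(G)$ is a subgraph of $\graphcomp(K_1)$, which is $3$-colorable and triangle-free by Lemma~\ref{lem:solvablegroups}. Symmetrically, if $2\mid|N|$ one uses $F_{21}\le T$ and $K_2=N.F_{21}$, with $\pi(K_2)=\pi(N)\cup\{3,7\}=\pi(G)$. Thus both properties are inherited from a solvable overgroup, and it remains only to rule out $2,7\nmid|N|$.

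The heart of the argument is to show that if $2,7\nmid|N|$ then $K$ contains an element of order $2p$, contradicting the hypothesis. Fix $P\in\mathrm{Syl}_p(N)$, which is also Sylow in $K$ since $p\nmid|T|$. By the Frattini argument $K=N\cdot N_K(P)$, so $X:=N_K(P)$ satisfies $X/(X\cap N)\cong T$ with $X\cap N$ solvable of odd order (this is where $2\nmid|N|$ is used) and $P\in\mathrm{Syl}_p(X)$. It suffices to find an involution $t\in X$ with $C_P(t)\ne1$, since then $tx$ has order $2p$ for a suitable $x$ of order $p$. Passing to $\overline P=P/\Phi(P)$ and using coprime action, this reduces to finding an involution of $X$ with a nonzero fixed vector on $\overline P$. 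Setting $C=C_X(\overline P)\ge P$, the group $X/C$ acts faithfully on $\overline P$ and, because $P$ is Sylow in $X$, satisfies $p\nmid|X/C|$; hence $\overline P$ is a semisimple $\F_p[X/C]$-module. If $X/C$ had odd order every involution of $X$ would centralize $\overline P$ and we would be done, so assume otherwise; then $X/C$ has the form (solvable of odd order)$.T$, and we may lift an involution of $T$ to an involution $\bar t\in X/C$ projecting to a non-central involution of $T$. Pick an irreducible summand $W$ of $\overline P$ on which $\bar t$ acts nontrivially, and let $L$ be the image of $X/C$ acting on $W$: it is faithful, irreducible, again of the form (odd)$.T$, and $p\nmid|L|$. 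Over a splitting field the image $\hat t$ of $\bar t$ is diagonalizable with eigenvalues $\pm1$; were they all $-1$ then $\hat t=-I$ would be central in $\GL(W)$, hence central in $L$ by faithfulness, forcing its image in $T$ to be trivial---impossible. So $\hat t$ fixes a nonzero vector of $W$; thus $t$ fixes a nonzero element of $\overline P$, giving $C_P(t)\ne1$ and the forbidden element of order $2p$.

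The step I expect to be the main obstacle is precisely this last branch when $3\mid|N|$: then $(|N|,|T|)\ne1$, so Lemma~\ref{lem:rep_theory_biz} cannot be applied directly to $K=N.T$, and one must first perform the Frattini/Sylow reduction to the \emph{coprime} faithful module $W$ (where $p\nmid|L|$). Once there, the fixed point for the involution follows not from an explicit character-table computation but simply from the fact that an involution of $\PSL(2,7)$ is non-central and so cannot act as $-I$ on a faithful module; this is the substitute for verifying the hypothesis of Lemma~\ref{lem:rep_theory_biz}, and it is what lets us conclude in all cases.
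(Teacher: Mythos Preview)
Your proof is correct, but it takes a substantially longer route than the paper's. The paper's argument is a two-line observation: assume $2\nmid|N|$, look at the solvable subgroup $N.D_8\le K$, and take its Hall $\{2,p\}$-subgroup, which by Lemma~\ref{lem:Hallsubgroups} and Schur--Zassenhaus is $P\rtimes D_8$ with $P\in\mathrm{Syl}_p(N)$. Since $D_8$ is neither cyclic nor generalized quaternion it cannot be a Frobenius complement, so some involution of $D_8$ fixes a nontrivial element of $P$, producing an element of order $2p$. Thus $2\mid|N|$, and the solvable subgroup $N.F_{21}$ already has $\pi(N.F_{21})=\pi(G)$, finishing the proof.

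Your approach---Frattini argument to pass to $N_K(P)$, reduction to the Frattini quotient $\overline P$, and the linear-algebra fact that a non-central involution cannot act as $-I$ on a faithful module---is valid and even somewhat more portable: it would work for any simple $T$ with trivial center, without needing to know that the Sylow $2$-subgroup of $T$ fails to be cyclic or quaternion. But for $T=\PSL(2,7)$ this generality is unnecessary, and the paper's use of the specific Sylow $2$-structure $D_8$ collapses your entire ``heart of the argument'' into a single sentence. Note also that your separate treatment of $7\mid|N|$ is redundant: your contradiction in the final branch only uses $2\nmid|N|$, so (as the paper does) one may simply show $2\mid|N|$ directly. Finally, the passage to a splitting field in your last step is not needed, since an involution over $\F_p$ with $p$ odd is already diagonalizable with eigenvalues $\pm1$.
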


\begin{proof}
Suppose that $\edge{2}{p}$ is an edge in $\graphcomp(G)$. By Corollary \ref{cor:subgroupcor}, we have a subgroup $K = N.\PSL(2,7)$ of $G$, with $N$ solvable, such that $\pi(G) = \pi(K)$. We claim $2 \mid |N|$.

Suppose, on the contrary, that $2 \nmid |N|$. Note that since the subgroup $N.D_8$ of $K$ is solvable, it has a Hall $\{2,p\}$-subgroup $H_{2p}$. By Lemma \ref{lem:Hallsubgroups}, $H_{2p} = P.D_8 = P \rtimes D_8$, where $P$ is a Sylow $p$-subgroup of $N$. Since $D_8$ cannot act Frobeniusly, $H_{2p}$, and thus $G$, contains an element of order $2p$. But $\edge{2}{p} \in \graphcomp(G)$, a contradiction. 

Therefore, $2 \mid |N|$. Taking the solvable subgroup $K_1 = N.F_{21}$ of $K$, we know $\graphcomp(K_1)$ is $3$-colorable and triangle-free. Since $2 \mid |N|$, $\pi(K_1) = \pi(K) = \pi(G)$. This implies $\graphcomp(G)$ is a subgraph of $\graphcomp(K_1)$, so $\graphcomp(G)$ is $3$-colorable and triangle-free.
\end{proof}

We immediately obtain the following corollaries to Proposition \ref{prop:2-p}.

\begin{corollary}\label{No2-7-p}
If $G$ is a pseudo $\PSL(2,7)$-solvable group and $p \notin \{2,3,7\}$, then there is no $\{2, 7, p\}$ triangle in $\graphcomp(G)$.
\end{corollary}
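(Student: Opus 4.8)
The plan is to deduce Corollary \ref{No2-7-p} directly from Proposition \ref{prop:2-p} by a short case analysis on whether $G$ is solvable and whether the relevant edge actually appears. Suppose toward a contradiction that $\graphcomp(G)$ contains a $\{2,7,p\}$ triangle for some prime $p \notin \{2,3,7\}$. A triangle in $\graphcomp(G)$ means that all three edges $\edge{2}{7}$, $\edge{2}{p}$, and $\edge{7}{p}$ are present. In particular, $\edge{2}{p} \in \graphcomp(G)$, which is exactly the hypothesis of Proposition \ref{prop:2-p}.

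First I would handle the solvable case separately, since Proposition \ref{prop:2-p} is stated only for strictly pseudo $\PSL(2,7)$-solvable groups. If $G$ is solvable, then by Lemma \ref{lem:solvablegroups} the complement $\graphcomp(G)$ is triangle-free, so no triangle of any kind can exist and we are immediately done. Otherwise $G$ has at least one composition factor isomorphic to $\PSL(2,7)$, so $G$ is strictly pseudo $\PSL(2,7)$-solvable and Proposition \ref{prop:2-p} applies.

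Now I would feed the edge $\edge{2}{p}$ into Proposition \ref{prop:2-p}. Its conclusion is that $\graphcomp(G)$ is $3$-colorable and triangle-free. But this contradicts our assumption that $\graphcomp(G)$ contains a $\{2,7,p\}$ triangle. Hence no such triangle can exist, proving the corollary.

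I do not expect any genuine obstacle here: the corollary is an immediate logical consequence of the proposition, and the only subtlety worth flagging is the bookkeeping between ``pseudo'' and ``strictly pseudo'' $\PSL(2,7)$-solvable groups, which is why the solvable case must be disposed of first via Lemma \ref{lem:solvablegroups}. The single real input is the observation that the existence of a $\{2,7,p\}$ triangle forces the edge $\edge{2}{p}$ to be present, which is precisely the trigger for Proposition \ref{prop:2-p}.
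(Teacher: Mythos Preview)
Your proof is correct and matches the paper's approach exactly: the paper simply states that this corollary follows immediately from Proposition~\ref{prop:2-p}, without writing out any argument. Your handling of the pseudo versus strictly pseudo distinction via Lemma~\ref{lem:solvablegroups} is the right bookkeeping and makes explicit what the paper leaves implicit.
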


\begin{corollary}\label{isolated 2}
If $G$ is a pseudo $\PSL(2,7)$-solvable group and $\{2,3,7\}$ forms a triangle in $\graphcomp(G)$, then the vertex $2$ is not connected to any other vertices in $\graphcomp(G)$.
\end{corollary}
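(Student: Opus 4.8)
The plan is to argue by contradiction and reduce immediately to Proposition \ref{prop:2-p}. Suppose that $\{2,3,7\}$ forms a triangle in $\graphcomp(G)$, yet the vertex $2$ is adjacent in $\graphcomp(G)$ to some prime $p \notin \{3,7\}$. Since $\{2,p\}$ is an edge we have $p \neq 2$, so in fact $p \notin \{2,3,7\}$ and $\edge{2}{p} \in \graphcomp(G)$. The goal is to show this configuration is impossible, so that the only neighbors of $2$ in $\graphcomp(G)$ are $3$ and $7$.

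First I would rule out the solvable case, since Proposition \ref{prop:2-p} requires $G$ to be \emph{strictly} pseudo $\PSL(2,7)$-solvable. If $G$ were solvable, then by Lemma \ref{lem:solvablegroups} the complement $\graphcomp(G)$ would be triangle-free, directly contradicting the hypothesis that $\{2,3,7\}$ is a triangle. Hence $G$ must have at least one composition factor isomorphic to $\PSL(2,7)$; that is, $G$ is strictly pseudo $\PSL(2,7)$-solvable, which supplies exactly the hypothesis needed in the next step.

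With the reduction in hand, the conclusion is immediate: $G$ is strictly pseudo $\PSL(2,7)$-solvable, $p \notin \{2,3,7\}$, and $\edge{2}{p} \in \graphcomp(G)$, so Proposition \ref{prop:2-p} applies and forces $\graphcomp(G)$ to be $3$-colorable and triangle-free. This contradicts the assumed triangle $\{2,3,7\}$. Therefore no such $p$ exists, and the vertex $2$ is connected only to $3$ and $7$ in $\graphcomp(G)$, as claimed. I do not expect a genuine obstacle here: essentially all of the work is already absorbed into Proposition \ref{prop:2-p}, and the only point demanding care is the preliminary observation that the presence of the $\{2,3,7\}$ triangle rules out solvability via Lemma \ref{lem:solvablegroups}, thereby licensing the use of that proposition.
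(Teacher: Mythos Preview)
Your proof is correct and matches the paper's approach: the paper simply records this as an immediate corollary of Proposition~\ref{prop:2-p}, and the only detail to fill in is precisely your observation that the presence of the $\{2,3,7\}$ triangle forces $G$ to be strictly pseudo $\PSL(2,7)$-solvable (via Lemma~\ref{lem:solvablegroups}), after which Proposition~\ref{prop:2-p} gives the contradiction.
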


The above arguments show that if $G$ is a pseudo $\PSL(2, 7)$-solvable group, then at most one triangle exists in $\graphcomp(G)$, and it must be the $\{2, 3, 7\}$ triangle. If this triangle does not exist, then $\graphcomp(G)$ is triangle-free and $3$-colorable, as we will show in Theorem \ref{thm:PSL_classification}. This gives us the constraints as for solvable groups. However, if such a triangle does exist, then there are still multiple questions to ask, such as classifying the outgoing edges from $2$, $3$, and $7$. By Corollary \ref{isolated 2}, we already have that there are no outgoing edges from $2$ besides $\edge{2}{3}$ and $\edge{2}{7}$. We will prove that the same holds for $3$, but not for $7$. In particular, it turns out that $7$ may have an arbitrary number of outgoing edges.


\begin{proposition}\label{prop:no3pedges}
If $G$ is a pseudo $\PSL(2,7)$-solvable group and $\{2,3,7\}$ forms a triangle in $\graphcomp(G)$, then the vertex $3$ is not connected to any other vertices in $\graphcomp(G)$.
\end{proposition}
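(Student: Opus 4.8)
The plan is to translate the statement into an existence assertion about element orders and then produce those elements using the representation theory of $\PSL(2,7)$. Saying that $3$ is not joined to any vertex other than $2$ and $7$ in $\graphcomp(G)$ means precisely that for every prime $\ell \mid |G|$ with $\ell \notin \{2,3,7\}$ the edge $\edge{3}{\ell}$ is \emph{not} in $\graphcomp(G)$, i.e. $G$ has an element of order $3\ell$. So I would fix such an $\ell$ and manufacture an element of order $3\ell$. By Corollary \ref{cor:subgroupcor} it suffices to do this inside a subgroup $K \cong N.\PSL(2,7)$ with $N$ solvable and $\pi(K) = \pi(G)$; since $\pi(\PSL(2,7)) = \{2,3,7\}$, the prime $\ell$ divides $|N|$.

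The engine is Lemma \ref{lem:rep_theory_biz} applied with $T = \PSL(2,7)$ and $p = 3$. Its hypothesis is that some order-$3$ element $t \in \PSL(2,7)$ acts with a fixed point (eigenvalue $1$) in \emph{every} irreducible complex representation, and the first thing I would do is verify this from the character table. The irreducible degrees are $1,3,3,6,7,8$, and on the unique class of elements of order $3$ the corresponding character values are $1,0,0,0,1,-1$. The multiplicity of the trivial character in the restriction of $\chi$ to $\langle t\rangle$ is $\tfrac13\!\left(\chi(1)+2\chi(t)\right)$, which here equals $1,1,1,2,3,2$ respectively --- all strictly positive. (Equivalently, every irreducible occurs in $\on{Ind}_{\langle t\rangle}^{\PSL(2,7)}\mathbf 1$.) Thus every irreducible representation has a nonzero $t$-fixed vector and the hypothesis is met. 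It is essential here to use the full simple group rather than a solvable subgroup such as $A_4$ or $F_{21}$, inside which an order-$3$ element can act fixed-point-freely; the global constraint across \emph{all} irreducibles is exactly what forbids this.

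The one gap is that Lemma \ref{lem:rep_theory_biz} requires a \emph{coprime} extension $A.\PSL(2,7)$ with $A$ a $\{2,3,7\}'$-group and $\ell \mid |A|$, whereas the $N$ from Corollary \ref{cor:subgroupcor} may have order divisible by $2,3,7$; securing this coprimality is the crux. I would argue by induction on $|N|$ using a minimal normal subgroup $V$ of $K$. If $V$ is nonsolvable then $V \cong \PSL(2,7)$ by Lemma \ref{lem:onecopyofT} (the edge $\edge{2}{3}$ of the triangle licenses its application), so $V \cap N = 1$ and $K = N \times \PSL(2,7)$; then an order-$3$ element of the direct factor commutes with an order-$\ell$ element of $N$ and we are done. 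Otherwise $V \le N$ is an elementary abelian $p$-group. If $p = \ell$ and $C_K(V)$ covers $K/N \cong \PSL(2,7)$, then $3 \mid |C_K(V)|$ and a Cauchy order-$3$ element of $C_K(V)$ commutes with an order-$\ell$ element of $V$, again yielding order $3\ell$. If $p \neq \ell$, I pass to $K/V$, obtain an element of order $3\ell$ by induction, and lift it with the coprime-action identity $C_{K/V}(yV) = C_K(y)V/V$. The stubborn case is $p = \ell$ with $C_K(V) \le N$, so that $V$ is a faithful irreducible $\F_\ell[\bar N.\PSL(2,7)]$-module with $\bar N = N/C_K(V)$; here I would strip the $\{2,3,7\}$-part of $\bar N$ by Clifford theory (and a further induction) to reach a genuinely coprime action, at which point Lemma \ref{lem:rep_theory_biz} and the character computation above produce a $t$-fixed vector and hence the desired element. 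This faithful-module reduction to a coprime action is the step I expect to demand the most care; everything else is routine coprime-action bookkeeping.
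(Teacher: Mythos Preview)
Your character-table observation is correct and worth recording: every element of order $3$ in $\PSL(2,7)$ has a nonzero fixed vector in each complex irreducible. But the proof you outline does not close, and it diverges from the paper in a way that costs you the leverage the triangle hypothesis provides.

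The gap is exactly where you flag it. In the ``stubborn case'' you have a faithful irreducible $\F_\ell[\bar N.\PSL(2,7)]$-module $V$ and you need some element of order $3$ to fix a nonzero vector. Saying you will ``strip the $\{2,3,7\}$-part of $\bar N$ by Clifford theory (and a further induction)'' is not a proof: Clifford theory tells you $V$ is induced from an inertia subgroup lying between $\bar N$ and $\bar K$, but it does not by itself transfer the fixed-point information for order-$3$ elements from $\PSL(2,7)$ to $\bar N.\PSL(2,7)$, and you never specify what quantity the ``further induction'' decreases or why the hypotheses persist. Since this case is precisely the one where Lemma~\ref{lem:rep_theory_biz} fails to apply, your argument as written has no mechanism to finish.

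The paper takes a different and shorter route that uses the triangle much more aggressively. First, the edge $\edge{2}{3}$ together with the subgroup $S_4 \le \PSL(2,7)$ forces $3 \nmid |N|$: the Hall $\{2,3\}$-subgroup of $N.S_4$ has the form $A.S_4$, and for this to be Frobenius or $2$-Frobenius it must be $2$-Frobenius of type $(2,3,2)$, which is impossible if $3\mid|A|$. With $3\nmid|N|$ in hand, one passes to a Hall $\{2,3,7,p\}$-subgroup $B.\PSL(2,7)$ where $B$ is a $\{2,7,p\}$-group, picks any order-$3$ element $t$ outside $B$, and now uses \emph{all} the edges $\edge{2}{3}$, $\edge{3}{7}$, $\edge{3}{p}$ simultaneously to conclude $C_B(t)=1$. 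At that point Lemma~\ref{lem:extensions_of_A6} (not Lemma~\ref{lem:rep_theory_biz}) gives $B=1$, contradicting $p\mid|B|$. Note that Lemma~\ref{lem:extensions_of_A6} already packages the representation-theoretic fact you verified, but for extensions $P.\PSL(2,7)$ with $P$ an arbitrary $p$-group ($p\neq 3$) --- no coprimality with $|\PSL(2,7)|$ is needed. Your approach essentially ignores the edges $\edge{3}{7}$ and $\edge{3}{p}$ and uses $\edge{2}{3}$ only peripherally; the paper's approach spends all three triangle edges plus the hypothetical $\edge{3}{p}$, and that is what eliminates the coprimality obstacle you are struggling with.
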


\begin{proof}
Suppose for contradiction that the edge $\edge{3}{p}$ is in $\graphcomp(G)$ for some prime $p$ coprime to $42$. Since $\{2, 3, 7\}$ forms a triangle in $\graphcomp(G)$, $G$ is not solvable. Thus, by Corollary \ref{cor:subgroupcor}, $G$ has a subgroup $K = N.\PSL(2,7)$, where $N$ is solvable and $\pi(K) = \pi(G)$. We claim $3 \nmid |N|$.

Consider the solvable subgroup $K_1 = N.S_4$ of $K$, and take a Hall $\{2,3\}$-subgroup $H_{23}$ of $K_1$. By Lemma \ref{lem:Hallsubgroups}, $H_{23} = A.S_4$, where $A$ is a Hall $\{2, 3\}$-subgroup of $N$. Since $\edge{2}{3}$ is an edge in $\graphcomp(G)$, it must be an edge in $\graphcomp(H_{23})$. Therefore, $H_{23}$ is Frobenius or $2$-Frobenius. But $S_4$ sits at the top of the normal series for $H_{23}$, so $H_{23}$ must be $2$-Frobenius of type $(2,3,2)$. In particular, $3 \nmid |A|$, so $3 \nmid |N|$. 

Now let $\pi = \{2, 3, 7, p\}$. Since the only composition factors of $K$ are either cyclic or $\PSL(2, 7)$, $K$ is $\pi$-separable, so it has a Hall $\pi$-subgroup $H_\pi$. By Lemma \ref{lem:Hallsubgroups}, $H_\pi = B.\PSL(2, 7)$, where $B$ is a Hall $\{2,7,p\}$-subgroup of $N$. 

Take some order $3$ element $t \in H_\pi \setminus B$, and consider $C_B(t)$. If $C_B(t)$ is nontrivial, then $t$ commutes with an element of order $2$, $7$, or $p$. But $\edge{2}{3}, \edge{3}{7}, \edge{3}{p} \in \graphcomp(K)$, so there are no elements of order $6, 21$, or $3p$ in $H_\pi$, a contradiction. Thus, $C_B(t)$ must be trivial. By Lemma \ref{lem:extensions_of_A6}, this means $B = 1$. However, $p \mid |B|$, a contradiction. This completes the proof.
\end{proof}

Finally, we have our main classification theorem for prime graphs of pseudo $\PSL(2,7)$-solvable groups. The construction in the following theorem closely follows the construction from \cite[Theorem 3.1]{A5classification} for $\PSL(2,7)$ rather than $A_5$.

\begin{theorem}\label{thm:PSL_classification}
Let $\Gamma$ be a simple graph. Then $\Gamma$ is isomorphic to the prime graph of a pseudo $\PSL(2, 7)$-solvable group $G$ if and only if one of the following holds:
\begin{enumerate}
    \item $\graphcomp$ is triangle-free and $3$-colorable.
    \item $\graphcomp$ has one exactly one triangle $\{a,b,c\}$, the vertices $a, b$ are not connected to any other vertices in $\graphcomp$, and $\graphcomp$ has a $3$-coloring for which all the neighbors of $c$ other than $a$ and $b$ have the same color.
\end{enumerate}
\end{theorem}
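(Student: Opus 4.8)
The plan is to prove the two directions separately, leaning heavily on the structural results already established for $\PSL(2,7)$. For necessity, suppose $\graphcomp = \graphcomp(G)$ for a pseudo $\PSL(2,7)$-solvable group $G$. First I would pin down the triangles: by Theorem \ref{thm:psuedopsl27primegraphs} the graph $\graphcomp(G)\setminus\{\edge{2}{7}\}$ is triangle-free, so every triangle of $\graphcomp(G)$ must use the edge $\edge{2}{7}$, and then Corollary \ref{No2-7-p} (no $\{2,7,p\}$ triangle for $p\neq 3$) forces the unique possible triangle to be $\{2,3,7\}$. Next I would dispose of the easy cases: if $G$ is solvable, Lemma \ref{lem:solvablegroups} gives case (1); and if $G$ is strictly pseudo $\PSL(2,7)$-solvable with the vertex $2$ adjacent in $\graphcomp(G)$ to some prime $p\notin\{2,3,7\}$, then Proposition \ref{prop:2-p} gives case (1) directly.

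The core of the necessity argument is the remaining case, where $2$ is adjacent only to a subset of $\{3,7\}$. Using the orientation conventions of the Frobenius digraph $\digraph(G)$ ($\edge{2}{3}$ oriented $3\to 2$, $\edge{3}{7}$ oriented $3\to 7$, $\edge{2}{7}$ oriented $7\to 2$, and each external edge at $2,3,7$ oriented away from it), the vertex $3$ is a source, the only in-edge at $7$ is $3\to 7$, and now $2$ is a sink. I would then check that adjoining $7\to 2$ to the already $3$-path-free digraph $\digraph(K)$ creates no directed $3$-path: any new directed path must terminate at the sink $2$ via $7$, hence be a subpath of $3\to 7\to 2$, which has only two edges. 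So $\digraph(G)$ has no directed $3$-path and $\graphcomp(G)$ is $3$-colorable by Gallai-Hasse-Roy-Vitaver. If no triangle is present this is case (1). If the triangle $\{2,3,7\}$ is present, Corollary \ref{isolated 2} and Proposition \ref{prop:no3pedges} show $2$ and $3$ are isolated apart from the triangle, so I set $(a,b,c)=(2,3,7)$; moreover the explicit Gallai-Hasse-Roy-Vitaver coloring of $\digraph(G)$ (assigning each vertex the length of the longest directed path terminating at it) gives every neighbor of $7$ other than $2,3$ the color $2$, since each such vertex $b$ sits at the end of $3\to 7\to b$ and of no longer directed path. This is exactly the monochromatic condition of case (2).

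For sufficiency I would realize each case by a construction. Case (1) is immediate: the solvable realization of Lemma \ref{lem:solvablegroups} produces a solvable group, which is pseudo $\PSL(2,7)$-solvable. For case (2) I would assign primes $a\mapsto 2,\ b\mapsto 3,\ c\mapsto 7$ and distinct primes outside $\{2,3,7\}$ to the other vertices, and build $G$ as an extension of a solvable group by $\PSL(2,7)$, following the iterated-Frobenius construction of \cite[Theorem 2.8]{solvable_groups} and \cite[Theorem 3.1]{A5classification}. The triangle $\{2,3,7\}$ is supplied by $\PSL(2,7)$ itself, whose element orders are $1,2,3,4,7$ only, so its prime graph has no edges among $2,3,7$ and $\graphcomp(\PSL(2,7))$ is precisely that triangle. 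The interaction with the external primes is dictated by Lemma \ref{lem:rep_theory_biz} together with the character table of $\PSL(2,7)$: its elements of order $2$ and of order $3$ have eigenvalue $1$ in every irreducible representation, forcing elements of order $2p$ and $3p$ for all external $p$ (so $2$ and $3$ stay isolated in the complement), whereas its elements of order $7$ act without nonzero fixed vectors on the $3$-dimensional module. I would therefore house all primes of $N$ (the neighbors of $7$) in a single elementary abelian layer carrying this $3$-dimensional module, so that $7$ acts fixed-point-freely there—killing every order $7p$ with $p\in N$—while $2,3$ still fix points; the external primes outside $N$ and all remaining edges of the external subgraph are realized by the solvable iterated-Frobenius skeleton. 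The hypothesis that $N$ is monochromatic in the $3$-coloring is exactly what lets $N$ occupy one independent Frobenius layer compatibly with the orientation.

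The hard part will be the sufficiency construction in case (2): verifying that the assembled group has \emph{exactly} the prescribed prime graph, with no spurious or missing adjacencies. The delicate points are (a) that stacking $\PSL(2,7)$ onto the solvable skeleton introduces no element of order $6$, $14$, or $21$ and no order $7p$ for $p\in N$, while (b) preserving every order $2p$, $3p$, and $7p$ with $p\notin N$ together with the full external subgraph encoded by the $3$-coloring, including tracking the fixed-point behavior of the order $2,3,7$ elements on the mod-$p$ reductions of the module and confirming that the Frobenius actions of the solvable part survive passage to the $\PSL(2,7)$ quotient. Reconciling these simultaneous constraints is where the real work lies; by contrast the necessity direction is essentially bookkeeping on the Frobenius digraph once Corollary \ref{isolated 2} and Proposition \ref{prop:no3pedges} are available.
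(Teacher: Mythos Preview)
Your proposal is correct and follows essentially the same route as the paper: the necessity direction uses the Frobenius digraph and the results \ref{thm:psuedopsl27primegraphs}, \ref{prop:2-p}, \ref{No2-7-p}, \ref{isolated 2}, \ref{prop:no3pedges} in the same way (your longest-path coloring coincides with the paper's $\mathcal{O}/\mathcal{D}/\mathcal{I}$ partition), and the sufficiency direction invokes the same $3$-dimensional $\PSL(2,7)$-module and the construction machinery of \cite[Theorem 3.1]{A5classification}. One small architectural point to keep in mind when you carry out the construction: the paper builds $G = J \rtimes (\PSL(2,7)\times K)$ with $\PSL(2,7)$ and the solvable skeleton $K$ (covering the $\mathcal{O}$ and $\mathcal{D}$ classes) acting side by side on the bottom layer $J$ (covering all of $\mathcal{I}$, not just the neighbors of $7$), rather than stacking $\PSL(2,7)$ on top of a single solvable block---this is what lets the Frobenius actions realizing the edges between $\mathcal{D}\cup\mathcal{O}$ and $\mathcal{I}$ coexist with the $\PSL(2,7)$-module structure.
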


\begin{proof}
We first prove the forward direction. Assume that $\Gamma$ is the prime graph of a pseudo $\PSL(2,7)$-solvable group $G$. If $\graphcomp$ does not contain the edge $\edge{2}{7}$, then $\graphcomp$ is $3$-colorable and triangle-free by Theorem \ref{thm:psuedopsl27primegraphs}. Now suppose $\graphcomp(G)$ has a $\edge{2}{7}$ edge. Since removing the vertex $2$ from $\graphcomp(G)$ removes the edge $\edge{2}{7}$, $\graphcomp(G) \setminus \{2\}$ is $3$-colorable by Theorem \ref{thm:psuedopsl27primegraphs}. If $2$ is connected to some prime $p \notin \{2,3,7\}$, then by Proposition \ref{prop:2-p}, $\graphcomp(G)$ is $3$-colorable. Otherwise, the vertex $2$ can have at most degree $2$ (with one edge to $3$ and one to $7$). Therefore adding the vertex $2$ back to $\graphcomp(G) \setminus \{2\}$ will not violate $3$-colorability. Thus the complement of the prime graph of any pseudo $\PSL(2,7)$-solvable group will be $3$-colorable.

Now we consider the triangles in $\graphcomp(G)$. If there is no triangle in $\graphcomp(G)$ we are done, so suppose that there is a triangle. It must contain the edge $\edge{2}{7}$ due to Theorem \ref{thm:psuedopsl27primegraphs}. Therefore it must be a $\{2,3,7\}$ triangle by Corollary \ref{No2-7-p}, and it must be the only triangle in $\graphcomp(G)$. Furthermore, the vertices $2$ and $3$ are disconnected from all other primes by Corollary \ref{isolated 2} and Proposition \ref{prop:no3pedges}. Now examine the Frobenius Digraph $\digraph(G)$. Note that there will be no $3$-paths in this case by the proof of Theorem \ref{thm:psuedopsl27primegraphs}. If $p \notin \{2,3,7\}$ is an arbitrary prime adjacent to $7$, we have that the orientation is $3 \to 7 \to p$. Since there are no $3$-paths in this digraph, if $q$ is a prime adjacent to $p$, then the orientation of the $\edge{p}{q}$ edge must be $q \to p$. Now define a coloring on this digraph. Label all vertices with zero out-degree with $\mathcal{I}$, all vertices with zero in-degree and nonzero out-degree with $\mathcal{O}$, and all vertices with non-zero in- and out-degree with $\mathcal{D}$. This is a well-defined $3$-coloring, since $\digraph(G)$ contains no $3$-paths. And all vertices adjacent to $7$ other than $2$ and $3$ are labeled with $\mathcal{I}$. This concludes the forward direction of the proof. 

We now prove the backward direction. In each case, we will find a group $G$ whose prime graph is isomorphic to $\Gamma$. If we have a graph $\Gamma$ which satisfies (1), then by \cite[Theorem 2.8]{solvable_groups}, there exists a solvable group $G$ for which $\Gamma \cong \Gamma(G)$. Since $G$ is trivially pseudo $\PSL(2,7)$-solvable, this case is complete. 

Suppose we have a graph $\Gamma$ which satisfies (2). If the triangle $\{a,b,c\}$ is isolated in $\graphcomp$, then by the construction in \cite[Theorem 2.8]{solvable_groups}, there exists a solvable group $H$ with a prime graph whose complement is isomorphic to $\graphcomp \setminus \{a,b,c\}$ such that $2, 3, 7 \notin V(\Gamma(H))$. Then setting $G = \PSL(2,7) \times H$, we are done. So we may assume $c$ is connected in $\graphcomp$ to at least one vertex other than $a$ or $b$. 

Take the $3$-coloring $\{\mathcal{O}, \mathcal{D}, \mathcal{I}\}$ of $\graphcomp$ where $\mathcal{D}$ is the color containing the vertex $c$, $\mathcal{I}$ is the color containing the neighbors of $c$ other than $a$ and $b$, and $\mathcal{O}$ is the third color. We direct the edges of $\graphcomp$ by assigning the arrows pointing from $\mathcal{O} \rightarrow \mathcal{D}$, $\mathcal{O} \rightarrow \mathcal{I}$, and $\mathcal{D} \rightarrow \mathcal{I}$. Without loss of generality, suppose $a \in \mathcal{I}$ and $b \in \mathcal{O}$. Let $\digraph$ denote this orientation. Also, let $n_o = |\mathcal{O} \backslash \{b\}|$, $n_d = |\mathcal{D} \backslash \{c\}|$, and $n_i = |\mathcal{I} \backslash \{a\}|$. 

For the reader's convenience, we have provided Figure \ref{fig:psl_construction} to help visualize the construction. Choose $n_o$ distinct primes $p_1, \dots, p_{n_o}$ other than $2, 3, 7$. Set $p$ to be the product of these primes. By Dirichlet's theorem on arithmetic progressions, then pick a set of distinct primes $q_1, \dots, q_{n_d}$ other than $2, 3, 7$, such that $q_i \equiv 1$ (mod $p$) for all $i$. Identify each vertex in $\mathcal{O} \setminus \{b\}$ with one of the $p_j$, and identify each vertex in $\mathcal{D} \setminus \{c\}$ with one of the $q_i$. Define groups $P = C_{p_1} \times \dots \times C_{p_{n_o}}$, $Q = C_{q_1} \times \dots \times C_{q_{n_d}}$. For all indices $i, j$, if $p_j \rightarrow q_i$ is an edge in $\digraph$, then let $C_{p_j}$ act Frobeniusly on $C_{q_i}$. This is possible because $q_i \equiv 1$ (mod $p_i$). Otherwise, if $p_j$ and $q_i$ are not adjacent in $\graphcomp$, let $C_{p_j}$ act trivially on $C_{q_k}$. This defines a group action of $P$ on $Q$, so we obtain the induced semidirect product $K = Q \rtimes P$. Note that $K$ is solvable. By this construction, $\digraph(K) \cong \digraph[(\mathcal{O} \setminus \{b\}) \cup (\mathcal{D} \setminus \{c\})]$. Then, identifying the vertex $a$ with $2$, $b$ with $3$, and $c$ with $7$, we get that the Frobenius digraph of the pseudo-$\PSL(2,7)$ solvable group $\PSL(2,7) \times K$ is isomorphic to $\digraph[\mathcal{O} \cup \mathcal{D} \cup \{a\}]$. 

Now, let $v_1, \dots, v_{n_i}$ be the vertices in $\mathcal{I} \setminus \{a\}$. We will eventually identify each $v_k$ with another prime $r_k$. For each index $k$, let $N^1(v_k)$ and $N^2(v_k)$ denote the sets of primes in $\graphcomp \setminus \{a,b,c\}$ with in-distance $1$ and $2$ to $v_k$, respectively. If $N^1(v_k)$ is nonempty, let $B_k$ be a Hall $(N^1(v_k) \cup N^2(v_k))$-subgroup of $K$. By definition of $K$, $\Fit(B_k)$ is a Hall $N^1(v_k)$-subgroup of $K$. If $N^1(v_k)$ is empty, set $B_k = 1$. Now, there are two cases:
\begin{enumerate}
    \item \ul{$7 \rightarrow v_k$ is an edge in $\digraph$}. Take the irreducible complex representation $\rho_1$ of $\PSL(2,7)$ listed in Table \ref{table:3} for which elements of order $7$ act fixed-point freely and all other elements have fixed points. Then by Dirichlet's theorem on arithmetic progressions, pick a prime $r_k$ such that $|\PSL(2,7) \times B_k| \mid (r_k - 1)$. By \cite[Lemma 3.5]{A5classification}, there exists a modular representation $R_k$ of $\PSL(2,7) \times B_k$ over a finite field of characteristic $r_k$ such that $\Fit(B_k)$ acts Frobeniusly on $R_k$. Also, elements of order $2$ and $3$ have a fixed point in $R_k$, while elements of order $7$ act Frobeniusly on $R_k$.
    
    \item \ul{$7,v_k$ are not adjacent in $\graphcomp$}. Consider the trivial complex representation of $\PSL(2,7)$, and pick a prime $r_k$ such that $|\PSL(2,7) \times B_k| \mid (r_k - 1)$. By \cite[Lemma 3.5]{A5classification}, there then exists a modular representation $R_k$ of $\PSL(2,7) \times B_k$ over a finite field of characteristic $r_k$ such that $\Fit(B_k)$ acts Frobeniusly on $R_k$. Also, $\PSL(2,7)$ acts trivially on $R_k$.
\end{enumerate}
Let $J = R_1 \times \dots \times R_{n_i}$. Note that by Dirichlet's theorem on arithmetic progressions, we can require each of the $r_k$ to be distinct from the $p_j$ and $q_i$, and we can require the $r_k$ to be mutually distinct. Thus, we have defined an action of each $\PSL(2,7) \times B_k$ on $R_k$. This induces an action of $\PSL(2,7) \times K$ on $J$, which induces the pseudo $\PSL(2,7)$-solvable semidirect product $G = J \rtimes (\PSL(2,7) \times K)$. By this construction, $\digraph(G) \cong \digraph$. Therefore, $\Gamma(G) \cong \Gamma$. 
\end{proof}

\begin{figure}
    \centering

\begin{tikzpicture}
\node (7) at (0, 0) {$\bullet$};
\node (2) at (-1, 1.73) {$\bullet$};
\node (3) at (1, 1.73) {$\bullet$};

\node (r1) at (-5, -2) {$\bullet$};
\node (r2) at (-3, -2) {$\bullet$};
\node (r3) at (-1, -2) {$\bullet$};
\node (r4) at (1, -2) {$\bullet$};
\node (r5) at (3, -2) {$\cdots$};
\node (r6) at (5, -2) {$\bullet$};

\node (q1) at (-5, -4) {$\bullet$};
\node (q2) at (-3, -4) {$\bullet$};
\node (q3) at (-1, -4) {$\bullet$};
\node (q4) at (1, -4) {$\bullet$};
\node (q5) at (3, -4) {$\cdots$};
\node (q6) at (5, -4) {$\bullet$};

\node (p1) at (-5, -6) {$\bullet$};
\node (p2) at (-3, -6) {$\bullet$};
\node (p3) at (-1, -6) {$\bullet$};
\node (p4) at (1, -6) {$\bullet$};
\node (p5) at (3, -6) {$\cdots$};
\node (p6) at (5, -6) {$\bullet$};

\draw (7) -- (3);
\draw (7) -- (2);
\draw (3) -- (2);

\draw (7) -- (r1);
\draw (7) -- (r4);
\draw (7) -- (r6);

\draw (r2) -- (q2);
\draw (r2) -- (p3);
\draw (r1) -- (q2);
\draw (q4) -- (p1);
\draw (q2) -- (p1);
\draw (q4) -- (p6);
\draw (r6) -- (q4);
\draw (r6) -- (q6);
\draw (q3) -- (p4);
\draw (q4) -- (p3);
\draw (r6) -- (q2);
\draw (r1) -- (p2);

\draw[fill=red!20!white] (r1) circle (0.4);
\draw[fill=red!20!white] (r2) circle (0.4);
\draw[fill=red!20!white] (r3) circle (0.4);
\draw[fill=red!20!white] (r4) circle (0.4);
\draw[fill=red!20!white] (r6) circle (0.4);

\draw[fill=green!20!white] (q1) circle (0.4);
\draw[fill=green!20!white] (q2) circle (0.4);
\draw[fill=green!20!white] (q3) circle (0.4);
\draw[fill=green!20!white] (q4) circle (0.4);
\draw[fill=green!20!white] (q6) circle (0.4);

\draw[fill=blue!20!white] (p1) circle (0.4);
\draw[fill=blue!20!white] (p2) circle (0.4);
\draw[fill=blue!20!white] (p3) circle (0.4);
\draw[fill=blue!20!white] (p4) circle (0.4);
\draw[fill=blue!20!white] (p6) circle (0.4);

\draw[fill=green!20!white] (7) circle (0.4);
\draw[fill=blue!20!white] (3) circle (0.4);
\draw[fill=red!20!white] (2) circle (0.4);

\node at (2) {$2$};
\node at (3) {$3$};
\node at (7) {$7$};

\node at (r1) {$r_1$};
\node at (r2) {$r_2$};
\node at (r3) {$r_3$};
\node at (r4) {$r_4$};
\node at (r6) {$r_\ell$};

\node at (q1) {$q_1$};
\node at (q2) {$q_2$};
\node at (q3) {$q_3$};
\node at (q4) {$q_4$};
\node at (q6) {$q_m$};

\node at (p1) {$p_1$};
\node at (p2) {$p_2$};
\node at (p3) {$p_3$};
\node at (p4) {$p_4$};
\node at (p6) {$p_n$};

\node at (6.25, -2) {$\cal{I}$};
\node at (6.25, -4) {$\cal{D}$};
\node at (6.25, -6) {$\cal{O}$};
\node at (-6.25, -2) {$\;$};

\end{tikzpicture}
    \caption{This graph $\graphcomp$ is the complement to a graph $\Gamma$ which satisfies condition (2) of Theorem \ref{thm:PSL_classification}, along with the given $3$-coloring. The vertices are labeled with the corresponding primes which are assigned during the construction in the theorem.}
    \label{fig:psl_construction}
\end{figure}
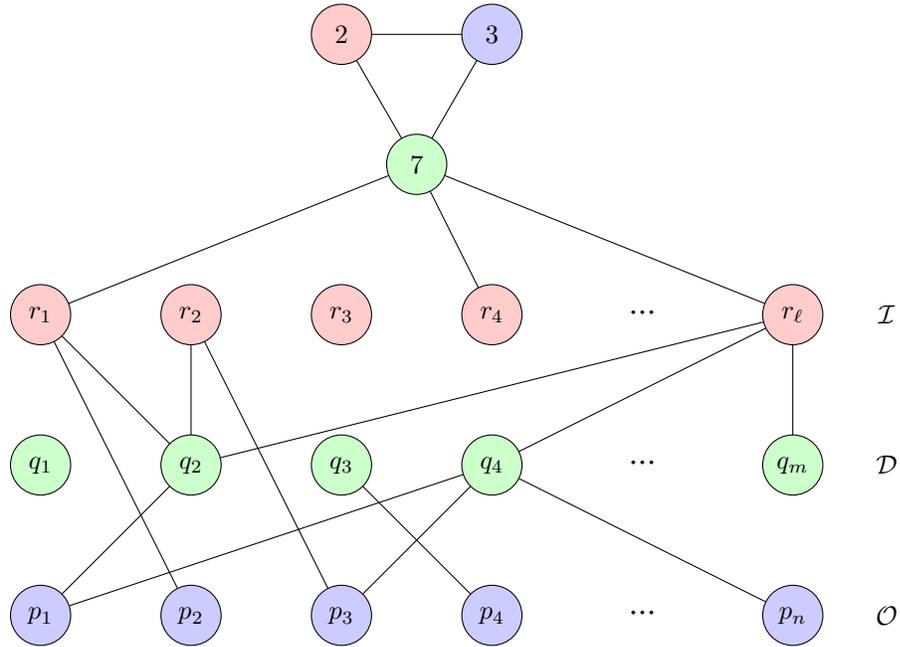

We conclude the section with several examples illustrating the classification of prime graphs of pseudo $\PSL(2,7)$-solvable groups.
\begin{example}
It is natural to ask whether the condition in Theorem \ref{thm:PSL_classification} that all the neighbors of $c$ other than $a$ and $b$ have the same color is really necessary. The graph given in Figure \ref{fig:psl_counter_example} justifies this condition. The graph may appear to be the complement of a prime graph of a pseudo $\PSL(2,7)$-solvable group, since it is $3$-colorable and its only triangle $\{a, b, c\}$ has only the vertex $c$ having neighbors outside $\{a, b, c\}$, as required by Theorem \ref{thm:PSL_classification}. However, there is no $3$-coloring of the graph such that all neighbors of $c$ outside the triangle have the same color, as the reader may readily verify. 
\end{example}

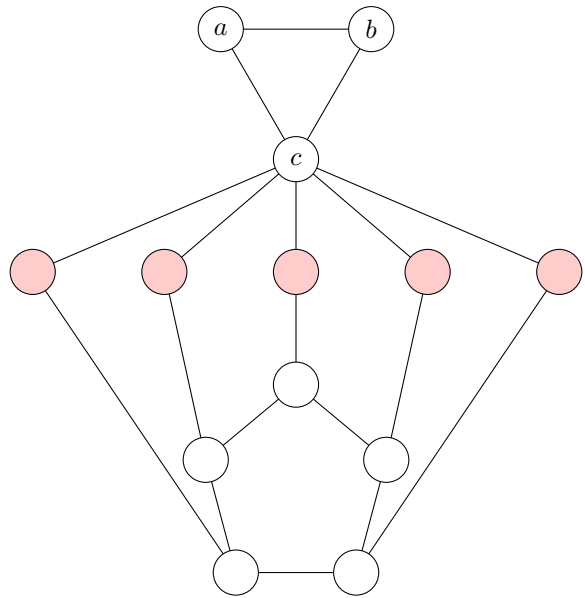
\begin{figure}
    \centering
    \begin{tikzpicture}
\node (7) at (0, 0) {$\bullet$};
\node (3) at (-1, 1.73) {$\bullet$};
\node (2) at (1, 1.73) {$\bullet$};

\node (r1) at (-3.5, -1.5) {$\bullet$};
\node (r2) at (-1.75, -1.5) {$\bullet$};
\node (r3) at (0, -1.5) {$\bullet$};
\node (r4) at (1.75, -1.5) {$\bullet$};
\node (r5) at (3.5, -1.5) {$\bullet$};

\node (p3) at (0, -3) {$\bullet$};
\node (p2) at (-1.2, -4) {$\bullet$};
\node (p4) at (1.2, -4) {$\bullet$};
\node (p1) at (-0.8, -5.5) {$\bullet$};
\node (p5) at (0.8, -5.5) {$\bullet$};

\draw (7) -- (3);
\draw (7) -- (2);
\draw (3) -- (2);

\draw (7) -- (r1);
\draw (7) -- (r2);
\draw (7) -- (r3);
\draw (7) -- (r4);
\draw (7) -- (r5);

\draw (p1) -- (r1);
\draw (p2) -- (r2);
\draw (p3) -- (r3);
\draw (p4) -- (r4);
\draw (p5) -- (r5);

\draw (p1) -- (p2);
\draw (p2) -- (p3);
\draw (p3) -- (p4);
\draw (p4) -- (p5);
\draw (p5) -- (p1);

\draw[fill=white] (2) circle (0.3);
\draw[fill=white] (3) circle (0.3);
\draw[fill=white] (7) circle (0.3);

\draw[fill=red!20!white] (r1) circle (0.3);
\draw[fill=red!20!white] (r2) circle (0.3);
\draw[fill=red!20!white] (r3) circle (0.3);
\draw[fill=red!20!white] (r4) circle (0.3);
\draw[fill=red!20!white] (r5) circle (0.3);

\draw[fill=white] (p1) circle (0.3);
\draw[fill=white] (p2) circle (0.3);
\draw[fill=white] (p3) circle (0.3);
\draw[fill=white] (p4) circle (0.3);
\draw[fill=white] (p5) circle (0.3);

\node at (3) {$a$};
\node at (2) {$b$};
\node at (7) {$c$};
\end{tikzpicture} 
    \caption{This graph is not the prime graph complement of any pseudo $\PSL(2, 7)$-solvable group.}
    \label{fig:psl_counter_example}
\end{figure}
\begin{example}
Figures \ref{fig:psl27-G1} and \ref{fig:psl27-G2} provide the prime graph complements of $G_1 = \SL(2,7)$ and $G_2 = \PSL(2,7) \times C_5$, respectively. These are both strictly pseudo $\PSL(2,7)$-solvable groups. The group $G_1$ illustrates Case (1) in the classification, while $G_2$ illustrates Case (2).
\end{example}

\begin{figure}
    \centering
    \begin{tikzpicture}
\node (7) at (0, 0) {$\bullet$};
\node (3) at (-1, 1.73) {$\bullet$};
\node (2) at (1, 1.73) {$\bullet$};

\draw (7) -- (3);

\draw[fill=white] (2) circle (0.3);
\draw[fill=white] (3) circle (0.3);
\draw[fill=white] (7) circle (0.3);

\node at (3) {$3$};
\node at (2) {$2$};
\node at (7) {$7$};
\end{tikzpicture} 
    \caption{$\graphcomp(\SL(2,7))$}
    \label{fig:psl27-G1}
\end{figure}
\begin{figure}
    \centering
    \begin{tikzpicture}
\node (7) at (0, 0) {$\bullet$};
\node (3) at (-1, 1.73) {$\bullet$};
\node (2) at (1, 1.73) {$\bullet$};
\node (5) at (2, 0) {$\bullet$};

\draw (7) -- (3);
\draw (7) -- (2);
\draw (2) -- (3);

\draw[fill=white] (2) circle (0.3);
\draw[fill=white] (3) circle (0.3);
\draw[fill=white] (7) circle (0.3);
\draw[fill=white] (5) circle (0.3);

\node at (3) {$3$};
\node at (2) {$2$};
\node at (7) {$7$};
\node at (5) {$5$};
\end{tikzpicture} 
    \caption{$\graphcomp(\PSL(2,7) \times C_5)$}
    \label{fig:psl27-G2}
\end{figure}


We now consider the unitary group $U_3(3)$. This is another nonabelian simple group such that $\pi(U_3(3)) = \{2, 3, 7\}$. Fortunately, $\PSL(2, 7)$ is a subgroup of $U_3(3)$, so we can exploit the classification of prime graphs of pseudo $\PSL(2, 7)$-solvable groups to classify the prime graphs of pseudo $U_3(3)$-solvable groups.

\begin{theorem}\label{thm:U33_classification}
A graph $\Gamma$ is isomorphic to the prime graph of a pseudo $U_3(3)$-solvable group exactly when $\graphcomp$ is $3$-colorable and triangle-free.
\end{theorem}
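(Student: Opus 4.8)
The plan is to prove both directions by leveraging the sandwich $\PSL(2,7) \leq U_3(3)$ together with the fact that $U_3(3)$ contains an element of order $6$. For the forward direction, suppose $\Gamma \cong \Gamma(G)$ for a pseudo $U_3(3)$-solvable group $G$. Since $\pi(U_3(3)) = \{2,3,7\}$ and $\pi(\Aut(U_3(3))) = \pi(U_3(3))$, I would first apply Corollary \ref{cor:subgroupcor} (if $G$ is strictly pseudo $U_3(3)$-solvable; if $G$ is solvable we are done by Lemma \ref{lem:solvablegroups}) to obtain a subgroup $K \cong N.U_3(3)$ with $N$ solvable and $\pi(K) = \pi(G)$, so that $\graphcomp(G)$ is a subgraph of $\graphcomp(K)$. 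It then suffices to show $\graphcomp(K)$ is $3$-colorable and triangle-free, and since these properties are inherited by subgraphs, I may as well argue directly about $G$.

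Next I would invoke Lemma \ref{lem:remove_p-r_then_solvable} with $(p,q,r) = (2,3,7)$: the group $U_3(3)$ contains a subgroup $S_{23}$ with $\pi(S_{23}) = \{2,3\}$ and a subgroup $S_{37}$ with $\pi(S_{37}) = \{3,7\}$ (for instance $U_3(3)$ contains $\PSL(2,7)$, which itself contains $A_4$ and $F_{21}$, giving the required subgroups with the middle prime $q = 3$). This immediately shows that $\graphcomp(G) \setminus \{\edge{2}{7}\}$ is $3$-colorable and triangle-free. The entire content of the theorem thus reduces to ruling out the $\edge{2}{7}$ edge: I must show that $\graphcomp(G)$ cannot contain $\edge{2}{7}$, i.e.\ that every pseudo $U_3(3)$-solvable group has an element of order $14$. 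This is where the element of order $6$ in $U_3(3)$ enters, and it is the crux of the argument.

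The hard part will be establishing that $\edge{2}{7} \notin \graphcomp(G)$. Here I would exploit that $U_3(3)$ has elements of order $6$ (equivalently, $\edge{2}{3} \notin \graphcomp(U_3(3))$), which already rules out a $\{2,3,7\}$ triangle in $\graphcomp(U_3(3))$, and more importantly I would transport the $\PSL(2,7)$ classification. Concretely, any pseudo $U_3(3)$-solvable group $G$ is in particular a group whose nonabelian composition factors are $U_3(3)$; I want to reach a contradiction from the existence of a $\{2,7\}$-edge. The cleanest route is to show that the presence of an order-$6$ element forces the Hall-type or Frobenius-digraph analysis to collapse the possible triangle: by the arguments underlying Corollary \ref{No2-7-p} and Corollary \ref{isolated 2}, any triangle in the complement of a pseudo $U_3(3)$-solvable group would have to be the $\{2,3,7\}$ triangle, but that triangle requires $\edge{2}{3} \in \graphcomp(G)$, whereas $U_3(3) \leq K$ (or rather the copy of $U_3(3)$ in the chief series) contributes an element of order $6$, giving $\edge{2}{3} \notin \graphcomp(K) \supseteq \graphcomp(G)$ — a contradiction. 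Thus no triangle exists, and combined with the previous paragraph the full complement $\graphcomp(G)$ is triangle-free and $3$-colorable.

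For the backward direction, given any $\Gamma$ whose complement $\graphcomp$ is $3$-colorable and triangle-free, Lemma \ref{lem:solvablegroups} produces a solvable group $G$ with $\Gamma \cong \Gamma(G)$; since every solvable group is trivially pseudo $U_3(3)$-solvable, this realizes $\Gamma$ as required and completes the equivalence. I expect the forward direction's elimination of the $\edge{2}{7}$ edge to be the only genuine obstacle; the rest is a direct application of the preliminary lemmas, and the realizability direction is immediate from the solvable case.
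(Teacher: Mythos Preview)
Your overall strategy—pass to a subgroup $N.\PSL(2,7)$ inside $N.U_3(3)$, transport the $\PSL(2,7)$ results, then kill the lone possible triangle using an order-$6$ element of $U_3(3)$—is exactly what the paper does. The backward direction is also identical. However, there is a genuine error in the middle of your forward argument that needs to be fixed.

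You write that after Lemma~\ref{lem:remove_p-r_then_solvable} the problem ``reduces to ruling out the $\edge{2}{7}$ edge,'' i.e.\ to showing every pseudo $U_3(3)$-solvable group contains an element of order $14$. This is false: $U_3(3)$ itself has no element of order $14$ (its element orders are $1,2,3,4,6,7,8,12$), so $\edge{2}{7}$ \emph{can} appear in $\graphcomp(G)$. Hence this reduction is a dead end, and the sentence ``every pseudo $U_3(3)$-solvable group has an element of order $14$'' must be deleted. What you actually go on to argue—and what is correct—is weaker: you only need to rule out \emph{triangles}, not the edge $\edge{2}{7}$ itself.

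That leaves a second, smaller gap: once you know only that $\graphcomp(G)\setminus\{\edge{2}{7}\}$ is $3$-colorable and that $\graphcomp(G)$ is triangle-free, you have not yet shown $\graphcomp(G)$ is $3$-colorable when $\edge{2}{7}$ is present. The clean fix is the paper's: pass to the subgroup $K_2 = N.\PSL(2,7)$ with $\pi(K_2)=\pi(G)$ and invoke the full $\PSL(2,7)$ classification (Theorem~\ref{thm:PSL_classification}), which already asserts $\graphcomp(K_2)$ is $3$-colorable with at most one triangle, necessarily $\{2,3,7\}$. Then $\graphcomp(G)$, being a subgraph of $\graphcomp(K_2)$, inherits $3$-colorability, and the order-$6$ element in $U_3(3)$ kills the only possible triangle. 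In short, your triangle argument is right, but you should cite Theorem~\ref{thm:PSL_classification} rather than Lemma~\ref{lem:remove_p-r_then_solvable} to get $3$-colorability directly, and drop the claim about elements of order $14$.
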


\begin{proof}
Suppose $\graphcomp$ is $3$-colorable and triangle free. Then we can choose a solvable group $G$ such that $\graphcomp \cong \graphcomp(G)$. This group $G$ is also pseudo $U_3(3)$-solvable, and so $\Gamma$ is isomorphic to the prime graph of a pseudo $U_3(3)$-solvable group. 

Now suppose $\Gamma = \Gamma(G)$ for some pseudo $U_3(3)$-solvable group $G$. If $G$ is solvable, then $\graphcomp$ is already $3$-colorable and triangle-free by Lemma \ref{lem:solvablegroups}. Now suppose that $G$ is strictly pseudo $U_3(3)$-solvable. We apply Corollary \ref{cor:subgroupcor} to retrieve a subgroup $K_1 = N.U_3(3) \leq G$. In particular, $N$ is solvable and $\pi(K_1) = \pi(G)$. Since $\PSL(2, 7) \leq U_3(3)$, there is a further subgroup $K_2 = N.\PSL(2, 7) \leq K_1 \leq G$. Since $N$ is solvable, $K_2$ is pseudo $\PSL(2, 7)$-solvable, and since $\pi(\PSL(2, 7)) = \pi(U_3(3))$, then $\pi(K_2) = \pi(G)$. Thus, $\graphcomp(G)$ may be reached by removing edges from $\graphcomp(K_2)$. 

Since $K_2$ is pseudo $\PSL(2, 7)$-solvable, then by Theorem \ref{thm:PSL_classification}, $\graphcomp(K_2)$ is $3$-colorable and has at most one triangle, which must be the $\{2, 3, 7\}$ triangle if it exists. Since $\graphcomp(G)$ may be realized by removing edges from $\graphcomp(K_2)$, then $\graphcomp(G)$ must be $3$-colorable as well, and must have at most one triangle, which will be the $\{2, 3, 7\}$ triangle if it exists. Note that $U_3(3)$ has an element of order $6$. Therefore, so does $G$, and thus $\edge{2}{3} \notin \graphcomp(G)$. Hence, no $\{2, 3, 7\}$ triangle exists in $\graphcomp(G)$. We conclude that $\graphcomp(G)$ is $3$-colorable and triangle-free.
\end{proof}

\section{The Alternating Group $A_6$ and the Unitary Group $U_4(2)$}\label{section:a6}

Now, we consider the alternating group $A_6$, and the prime graphs of pseudo $A_6$-solvable groups. Like $\PSL(2, 7)$, we can show that pseudo $A_6$-solvable groups have prime graph complements that are $3$-colorable with at most one triangle. We then prove that this triangle must be isolated if it exists, which leads directly to a classification result. With this result, we also achieve a classification of prime graphs of pseudo $U_4(2)$-solvable groups, which uses the fact that $A_6 \leq U_4(2)$ and that $U_4(2)$ contains an order $6$ element. However, we must first handle the case of $A_6$, which rests largely upon the following two lemmas.

\begin{lemma}\label{lem:A6_remove_3-5}
Let $G$ be a pseudo $A_6$-solvable group. Then $\graphcomp(G) \setminus \{\edge{3}{5}\}$ is $3$-colorable and triangle-free.
\end{lemma}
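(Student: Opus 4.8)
The plan is to reduce the statement immediately to Lemma \ref{lem:remove_p-r_then_solvable}, exactly as was done for $\PSL(2,7)$ in the proof of Theorem \ref{thm:psuedopsl27primegraphs}. Since $A_6$ is a $K_3$-group with $\pi(A_6) = \{2,3,5\}$ and we wish to delete the edge $\edge{3}{5}$, I would match the notation of that lemma by taking the two ``outer'' primes to be $p = 3$ and $r = 5$, with the common ``central'' prime $q = 2$. Note that this is a different bookkeeping than in the $\PSL(2,7)$ case, where the central prime was $3$; here it is $2$, precisely because the edge to be removed connects $3$ and $5$. The lemma will then yield exactly that $\graphcomp(G) \setminus \{\edge{3}{5}\}$ is $3$-colorable and triangle-free, provided I can supply the two subgroups of $A_6$ that it requires.

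Concretely, I need a subgroup $S_{pq}$ with $\pi(S_{pq}) = \{2,3\}$ and a subgroup $S_{qr}$ with $\pi(S_{qr}) = \{2,5\}$, each containing the central prime $q = 2$. Recalling that $|A_6| = 360 = 2^3 \cdot 3^2 \cdot 5$ and that $A_6$ contains a copy of $A_5$, I would take $S_{qr}$ to be a dihedral subgroup $D_{10} \le A_5 \le A_6$ of order $10$, which is a $\{2,5\}$-group, and $S_{pq}$ to be $A_4$ (or $S_4$) of order $12$, which is a $\{2,3\}$-group. Both are visibly subgroups of $A_6$, so all hypotheses of Lemma \ref{lem:remove_p-r_then_solvable} are satisfied; in particular the condition $\pi(\Aut(A_6)) = \pi(A_6)$ used internally by that lemma holds for every $K_3$-group.

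With these subgroups in hand, the proof is a single invocation: apply Lemma \ref{lem:remove_p-r_then_solvable} with $p = 3$, $q = 2$, $r = 5$ and the subgroups $A_4, D_{10} \le A_6$. I expect essentially no genuine obstacle here. The only points to get right are the bookkeeping of which prime plays the role of the central vertex $q$ (here $2$, not $3$) and the verification that the two chosen subgroups really have the stated prime sets. Everything substantive — the existence of the subgroup $K = N.A_6$ via Corollary \ref{cor:subgroupcor}, the well-definedness of the induced orientation on $\graphcomp(K)$, and the absence of directed $3$-paths — is already carried out inside the proof of Lemma \ref{lem:remove_p-r_then_solvable}, so there is nothing further to check.
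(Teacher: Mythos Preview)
Your proposal is correct and matches the paper's proof exactly: the paper applies Lemma \ref{lem:remove_p-r_then_solvable} with $q = 2$ and the subgroups $D_{10}, A_4 \leq A_6$, the only cosmetic difference being that it labels the outer primes as $p = 5$, $r = 3$ rather than your $p = 3$, $r = 5$, which is immaterial since the roles of $p$ and $r$ are symmetric in that lemma.
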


\begin{proof}
Let $p = 5, q = 2, r = 3$. Apply Lemma \ref{lem:remove_p-r_then_solvable} with subgroups $D_{10}, A_4 \leq A_6$. 
\end{proof}

\begin{lemma}\label{lem:A6_3-p}
Let $G$ be a strictly pseudo $A_6$-solvable group, and let $p \notin \{2,3,5\}$. Then $\edge{3}{p} \notin \graphcomp(G)$.
\end{lemma}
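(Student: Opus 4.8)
The statement is equivalent to the assertion that every such $G$ contains an element of order $3p$ (if $p\nmid|G|$ there is nothing to prove). The plan is to reduce to a coprime action of $A_6$ on a $p$-group and then feed the representation theory of $A_6$ into Lemma \ref{lem:rep_theory_biz}. First I would apply Corollary \ref{cor:subgroupcor} (legitimate since $\pi(\Aut(A_6))=\pi(A_6)$) to obtain a subgroup $K\cong N.A_6$ with $N$ solvable and $\pi(K)=\pi(G)$; as $p\notin\{2,3,5\}=\pi(A_6)$, necessarily $p\mid|N|$, and it suffices to find an element of order $3p$ in $K$. By Lemma \ref{lem:Hallsubgroups} the Hall $\{2,3,5,p\}$-subgroup of $K$ has the form $B.A_6$ with $p\mid|B|$, so I may assume $\pi(N)\subseteq\{2,3,5,p\}$.

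The representation-theoretic input is that \emph{every} element $t\in A_6$ of order $3$ acts with a nonzero fixed space in every irreducible complex representation: from the character table of $A_6$ one checks $\tfrac13\big(\chi(1)+\chi(t)+\chi(t^{-1})\big)\ge 1$ for all seven irreducible characters $\chi$ and both classes of order $3$. This is exactly the hypothesis of Lemma \ref{lem:rep_theory_biz}, so whenever $A_6$ acts coprimely on a nontrivial $p$-group, some order-$3$ element fixes an element of order $p$ and we obtain an element of order $3p$. Equivalently, through $A_6\cong\PSL(2,9)$ (with $9>5$ odd and $p\neq 3$), Lemma \ref{lem:extensions_of_A6} yields an element of order $3p$ in any extension $P.A_6$ with $P$ a nontrivial $p$-group.

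To reach such a configuration I would induct on $|N|$. If $O_{p'}(N)\neq 1$, passing to $K/O_{p'}(N)$ shrinks $N$ while preserving $p\mid|N|$ and the top $A_6$, and any element of order $3p$ in the quotient lifts to one in $K$; so assume $O_{p'}(N)=1$, whence $O_p(N)=F(N)\neq 1$ contains a minimal normal subgroup $V$ of $K$, which is an elementary abelian $p$-group. Look at $C_K(V)$: since $C_K(V)N/N\unlhd A_6$ is $1$ or $A_6$, either $C_K(V)$ covers $A_6$ — in which case $V\le Z(C_K(V))$ and $3\mid|C_K(V)|$, so a central element of order $p$ commutes with one of order $3$ and we are done — or $A_6$ acts nontrivially on $V$. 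In the latter case, if $p\mid|N/V|$ I finish by induction applied to $K/V$; otherwise $V$ is the full (normal) Sylow $p$-subgroup of $K$, so by Schur–Zassenhaus $K\cong V\rtimes Q$ with $Q$ a $p'$-group having $A_6$ as a composition factor acting nontrivially on $V$.

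The main obstacle is this last case. When $Q\cong A_6$ (equivalently $N$ is a $p$-group) the action is a genuine coprime $A_6$-action and the fixed-point property above finishes via Lemma \ref{lem:rep_theory_biz} (or Lemma \ref{lem:extensions_of_A6}). In general, however, $Q=(Q\cap N).A_6$ carries an intervening solvable layer that may involve the primes $2,3,5$, so the extension need not split and non-split covers such as $\SL(2,9)=2.A_6$, $3.A_6$, or $6.A_6$ can appear; these are not of the form $P.A_6$ with $P$ a $p$-group, so neither lemma applies off the shelf. To close this gap I would pass, via Clifford theory and Maschke's theorem, to a faithful irreducible constituent of $V$ on which $A_6$ is involved and verify — using a GAP-assisted inspection of the irreducible representations of $A_6$ and of its covers $2.A_6$, $3.A_6$, $6.A_6$ — that an order-$3$ element still acts with a nonzero fixed point. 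This representation-theoretic verification across the covers is the delicate step on which the argument hinges.
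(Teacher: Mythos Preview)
Your route is genuinely different from the paper's, and considerably heavier. The paper never touches representation theory or coprime actions here at all. Instead it uses a one-line Frobenius complement argument exploiting the fact that the Sylow $3$-subgroup of $A_6$ is $C_3^2$, which is \emph{noncyclic}. Concretely: inside $K=N.A_6$ sit the solvable subgroup $N.C_3^2$ and its Hall $\{3,p\}$-subgroup $H_{3p}=A.C_3^2$; if $\edge{3}{p}\in\graphcomp(G)$ then $H_{3p}$ is Frobenius of type $(3,p)$ or $2$-Frobenius of type $(3,p,3)$, forcing a $3$-group Frobenius complement with quotient $C_3^2$. But odd-order Frobenius complements are cyclic, contradiction. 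This completely sidesteps the intervening $\{2,3,5\}$-layer and any discussion of covers.

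Your approach, by contrast, follows the general template the paper uses for \emph{harder} groups (e.g.\ $\PSL(2,17)$, where the Sylow $3$-subgroup is cyclic and the shortcut above is unavailable). It can be made to work, but the gap you flag is real: once the solvable layer between $V$ and $A_6$ is nontrivial you are no longer in the setting of Lemma~\ref{lem:extensions_of_A6} or Lemma~\ref{lem:rep_theory_biz} as stated, and you would genuinely need the fixed-point verification across $2.A_6$, $3.A_6$, $6.A_6$ (plus an argument that Clifford theory actually lands you in one of these covers rather than something larger). So what the paper's argument buys is a complete, two-paragraph proof with no computational input; what your framework buys is uniformity with the other $K_3$-cases, at the cost of an unfinished step that is unnecessary here.
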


\begin{proof}
Suppose that $\edge{3}{p}$ is an edge in $\graphcomp(G)$. By Corollary \ref{cor:subgroupcor}, we have a subgroup $K = N.A_6$ of $G$, with $N$ solvable, such that $\pi(G) = \pi(K)$. Note that $N.C_3^2 \leq K$ is solvable, so it has a Hall $\{3,p\}$-subgroup $H_{3p}$. By Lemma \ref{lem:Hallsubgroups}, $H_{3p} = A.C_3^2$, where $A$ is the Hall $\{3,p\}$-subgroup of $N$. 

Since $\edge{3}{p} \in \graphcomp(G)$, we have $\edge{3}{p} \in \graphcomp(H_{3p})$. Thus, $H_{3p}$ is Frobenius or $2$-Frobenius. Given the structure $H_{3p} = A.C_3^2$, this group must be Frobenius of type $(3,p)$ or $2$-Frobenius of type $(3,p,3)$. In particular, there exists a $3$-group which is a Frobenius complement with quotient $C_3^2$. But the only $3$-groups which can act Frobeniusly are cyclic, and these do not have noncyclic quotients. This contradiction proves $\edge{3}{p} \notin \graphcomp(G)$.
\end{proof}

\begin{corollary}\label{cor:A6-onetriangle}
If $G$ is pseudo $A_6$-solvable, then $\graphcomp(G)$ has at most one triangle, which must be the triangle $\{2, 3, 5\}$ if it exists.
\end{corollary}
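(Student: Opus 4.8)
The plan is to combine Lemma~\ref{lem:A6_remove_3-5} with Lemma~\ref{lem:A6_3-p} exactly as the analogous corollaries were deduced for $\PSL(2,7)$. The point is that $\graphcomp(G)$ is already quite constrained: Lemma~\ref{lem:A6_remove_3-5} tells us that deleting the single edge $\edge{3}{5}$ produces a triangle-free graph, so every triangle in $\graphcomp(G)$ must use the edge $\edge{3}{5}$.

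First I would observe that if $G$ is solvable the claim is immediate from Lemma~\ref{lem:solvablegroups}, since then $\graphcomp(G)$ is triangle-free and there is nothing to prove. So I may assume $G$ is strictly pseudo $A_6$-solvable and that $\graphcomp(G)$ contains at least one triangle. By Lemma~\ref{lem:A6_remove_3-5}, removing $\edge{3}{5}$ leaves a triangle-free graph, so any triangle $\{x,y,z\}$ of $\graphcomp(G)$ must contain the edge $\edge{3}{5}$; write the triangle as $\{3,5,w\}$ for some third vertex $w$. The task then reduces to showing that $w$ is forced to be $2$ and that no second triangle can occur.

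Next I would pin down the third vertex $w$. Since $\{3,5,w\}$ is a triangle, both $\edge{3}{w}$ and $\edge{5}{w}$ lie in $\graphcomp(G)$. The edge $\edge{3}{w}$ rules out, via Lemma~\ref{lem:A6_3-p}, any value $w\notin\{2,3,5\}$: that lemma says no edge $\edge{3}{p}$ with $p\notin\{2,3,5\}$ can exist in $\graphcomp(G)$. Since $w$ is distinct from $3$ and $5$, the only remaining possibility is $w=2$, so the only candidate triangle is $\{2,3,5\}$. Finally, for uniqueness I would note that any triangle must be of this same form $\{3,5,w\}$ with $w=2$ forced, hence every triangle equals $\{2,3,5\}$; as a subgraph of $\graphcomp(G)$ this is a single triangle, so there is at most one triangle and it must be $\{2,3,5\}$ when present.

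I do not expect any genuine obstacle here, since all the hard work is carried out in the two preceding lemmas: Lemma~\ref{lem:A6_remove_3-5} isolates the edge $\edge{3}{5}$ as the unique edge every triangle must use, and Lemma~\ref{lem:A6_3-p} eliminates every candidate third vertex except $2$. The only point requiring a moment's care is the logical step that ``every triangle contains $\edge{3}{5}$'' together with ``the third vertex is forced to be $2$'' yields not merely that each triangle is $\{2,3,5\}$ but that there is consequently \emph{at most one} triangle; this is simply because a fixed vertex set determines at most one triangle in a simple graph.
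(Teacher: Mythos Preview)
Your proposal is correct and follows essentially the same approach as the paper: use Lemma~\ref{lem:A6_remove_3-5} to force every triangle to contain the edge $\edge{3}{5}$, then invoke Lemma~\ref{lem:A6_3-p} to eliminate any third vertex other than $2$. Your explicit case split into solvable versus strictly pseudo $A_6$-solvable is a minor refinement, since Lemma~\ref{lem:A6_3-p} applies only to strictly pseudo $A_6$-solvable groups, but the solvable case is vacuous anyway by Lemma~\ref{lem:solvablegroups}.
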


\begin{proof}
By Lemma \ref{lem:A6_remove_3-5}, the only triangles that are possible in $\graphcomp(G)$ are $\{2, 3, 5\}$ and $\{3, 5, p\}$ for some $p \notin \{2,3,5\}$. However, Lemma \ref{lem:A6_3-p} eliminates the possibility of a $\{3, 5, p\}$ triangle, so we are done.
\end{proof}

\begin{corollary}\label{cor:A6-3colorable}
If $G$ is a pseudo $A_6$-solvable group, then $\graphcomp(G)$ is $3$-colorable. 
\end{corollary}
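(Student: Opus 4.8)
The plan is to leverage the three immediately preceding results and reduce the claim to a short graph-coloring argument. First I would dispose of the solvable case: if $G$ is solvable, then $\graphcomp(G)$ is $3$-colorable (indeed triangle-free as well) directly by Lemma \ref{lem:solvablegroups}. Hence I may assume throughout that $G$ is strictly pseudo $A_6$-solvable, which is exactly the hypothesis under which Lemma \ref{lem:A6_3-p} applies.

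The central idea is to argue by \emph{vertex} deletion rather than by edge addition, since $3$-colorability is inherited by subgraphs but is not preserved when one adds an edge to a fixed coloring. Note that $3 \in \pi(G)$, because $A_6$ occurs as a composition factor and $3 \mid |A_6|$. I would delete the vertex $3$ from $\graphcomp(G)$, writing $\graphcomp(G) \setminus \{3\}$ for the resulting induced subgraph. Every edge of $\graphcomp(G) \setminus \{3\}$ is incident to neither endpoint being $3$, and in particular no such edge is $\edge{3}{5}$; thus $\graphcomp(G) \setminus \{3\}$ is a subgraph of $\graphcomp(G) \setminus \{\edge{3}{5}\}$. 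By Lemma \ref{lem:A6_remove_3-5} the latter graph is $3$-colorable, and since $3$-colorability passes to subgraphs, $\graphcomp(G) \setminus \{3\}$ admits a proper $3$-coloring.

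It then remains to extend such a coloring across the vertex $3$, and this is where Lemma \ref{lem:A6_3-p} enters: as $G$ is strictly pseudo $A_6$-solvable, the vertex $3$ is adjacent in $\graphcomp(G)$ to no prime outside $\{2,5\}$, so $\deg(3) \le 2$. Consequently the neighbors of $3$ use at most two of the three available colors, leaving at least one free color to assign to $3$. Extending the $3$-coloring of $\graphcomp(G) \setminus \{3\}$ in this manner yields a proper $3$-coloring of all of $\graphcomp(G)$.

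I do not expect a genuine obstacle here; the corollary is essentially a bookkeeping consequence of Lemmas \ref{lem:A6_remove_3-5} and \ref{lem:A6_3-p}. The only point demanding care is the decision to delete the entire vertex $3$—which frees us to recolor its neighbors as needed—rather than attempting to re-insert the single edge $\edge{3}{5}$ into a fixed coloring of $\graphcomp(G) \setminus \{\edge{3}{5}\}$, where the two endpoints might already share a color. This is the direct analogue of how the low-degree vertex $2$ is handled in the proof of Theorem \ref{thm:PSL_classification}.
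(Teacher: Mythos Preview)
Your proposal is correct and follows essentially the same approach as the paper: handle the solvable case via Lemma~\ref{lem:solvablegroups}, then in the strictly pseudo $A_6$-solvable case delete the vertex $3$, observe that $\graphcomp(G)\setminus\{3\}$ is a subgraph of the $3$-colorable graph $\graphcomp(G)\setminus\{\edge{3}{5}\}$ from Lemma~\ref{lem:A6_remove_3-5}, and finally reinsert $3$ using the degree bound from Lemma~\ref{lem:A6_3-p}. The only cosmetic difference is the order in which you invoke the two lemmas.
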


\begin{proof}
The case where $G$ is solvable follows from Lemma \ref{lem:solvablegroups}, so assume $G$ is \textit{strictly} pseudo $A_6$-solvable. In $\graphcomp(G)$, the only possible neighbors of the vertex $3$ are $2$ and $5$ by Lemma \ref{lem:A6_3-p}. So the vertex $3$ has degree at most $2$. Thus, in order to show that $\graphcomp(G)$ is $3$-colorable, it suffices to show that $\graphcomp(G) \setminus \{3\}$ is $3$-colorable. However, $\graphcomp(G) \setminus \{3\}$ is $3$-colorable, being a subgraph of $\graphcomp(G) \setminus \{\edge{3}{5}\}$, which is $3$-colorable by Lemma \ref{lem:A6_remove_3-5}. Thus, $\graphcomp(G)$ is $3$-colorable.
\end{proof}

We have established some basic facts about the colorability and number of triangles in prime graph complements of pseudo $A_6$-solvable groups, so now we move towards a more general classification. If $G$ is pseudo $A_6$-solvable and $\{2, 3, 5\}$ does not form a triangle in $\graphcomp(G)$, we may invoke Corollaries \ref{cor:A6-onetriangle} and \ref{cor:A6-3colorable} to show that $\graphcomp(G)$ is $3$-colorable and triangle-free. Therefore, the relevant case is when $\{2, 3, 5\}$ does form a triangle in $\graphcomp(G)$. We study the restrictions a $\{2, 3, 5\}$ triangle imposes on the structure of $G$, and use these restrictions to show that the $\{2, 3, 5\}$ triangle must be isolated in $\graphcomp(G)$.

\begin{lemma}\label{lem:A6_subgrouplemma}
If $G$ is pseudo $A_6$-solvable and $\{2, 3, 5\}$ forms a triangle in $\graphcomp(G)$, then $G$ has a subgroup $K = N.A_6$ where $N$ is solvable, $\pi(K) = \pi(G)$, and $(|N|, 15) = 1$. 
\end{lemma}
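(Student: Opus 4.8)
The plan is to start from Corollary \ref{cor:subgroupcor}, which hands us a subgroup $K = N.A_6 \le G$ with $N$ solvable and $\pi(K) = \pi(G)$, and then to show that this very $N$ already has order coprime to $15$, so that no further adjustment of $K$ is needed. The hypothesis that $\{2,3,5\}$ is a triangle in $\graphcomp(G)$ says precisely that $G$, and hence every subgroup and every section of $K$, contains no element of order $6$, $10$, or $15$. Since $A_6 \cong \PSL(2,9)$, I would first record the routine fact that an element of order $15$ (resp.\ $6$, $10$) in any section of $K$ lifts to one in $K$: take a preimage and pass to the relevant Hall part of the cyclic group it generates. Thus it suffices to derive a contradiction from each of the assumptions $5 \mid |N|$ and $3 \mid |N|$, and I would treat the two primes separately, $5$ first.

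For the prime $5$, assuming $5 \mid |N|$, I would factor out $O_{5'}(N)$, which is characteristic in $N$ and hence normal in $K$; in the quotient the image of $N$ has trivial $5'$-core, so its Fitting subgroup is a nontrivial $5$-group $P = O_5(\,\cdot\,)$ normal in the whole quotient. Using a Frattini argument on a Sylow $5$-subgroup, together with the fact that the Schur multiplier of $A_6$ is $C_6$ (coprime to $5$), I would extract a section of the form $P_0.A_6$ with $P_0$ a nontrivial $5$-group. Then Lemma \ref{lem:extensions_of_A6} applies with $q = 9 > 5$ and $p = 5 \ne 3$, producing an element of order $3 \cdot 5 = 15$, contradicting the triangle. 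Hence $5 \nmid |N|$.

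For the prime $3$ the situation is harder, because Lemma \ref{lem:extensions_of_A6} explicitly excludes $p = 3$, and the $2$- and $3$-parts of the multiplier of $A_6$ obstruct the clean extraction of a $P.A_6$ section. Assuming $3 \mid |N|$, I would reduce (as above) to the conjugation action of $A_6$, the top composition factor, on a nontrivial elementary abelian $3$-chief-factor $V$. If $A_6$ acts trivially on $V$, then (passing to the relevant $3$-subgroup and invoking coprime action now that $5 \nmid |N|$) an order-$3$ element is centralized by a preimage of an order-$5$ element of $A_6$, which can be taken of order $5$ by the coprime Schur--Zassenhaus theorem, giving an element of order $15$. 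If $A_6$ acts nontrivially, I would appeal to the modular representation theory of $A_6$ in characteristic $3$, computed in GAP as elsewhere in the paper: some element of order $2$ or $5$ fixes a nonzero vector of every nontrivial irreducible $\F_3 A_6$-module, yielding an element of order $6$ or $15$. Either way the triangle is contradicted, so $3 \nmid |N|$, and therefore $(|N|, 15) = 1$.

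The main obstacle I anticipate is the reduction step shared by both primes: converting the abstract extension $N.A_6$ into a configuration to which Lemma \ref{lem:extensions_of_A6} or an explicit character computation applies. The delicate point is that a chief $p$-factor need not split off as a literal $P.A_6$ subgroup, since the relevant extensions can be non-split covers such as $3.A_6$; so I expect the argument must work with sections and the coinvariants of the solvable part, or argue directly on the $A_6$-module structure of the chief factor. It is exactly here that the coprimality of $5$ to $|M(A_6)| = 6$ makes the prime $5$ tractable through the cited lemma, whereas the prime $3$ forces the separate, representation-theoretic analysis above.
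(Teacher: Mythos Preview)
Your route is far more laborious than necessary, and the reduction you yourself flag as the main obstacle is genuinely incomplete as sketched. The paper bypasses all of it with one observation: $A_6$ contains the solvable subgroups $D_{10}$ and $S_4$, with $\pi(D_{10})=\{2,5\}$ and $\pi(S_4)=\{2,3\}$. If $3 \mid |N|$, then $K_1 := N.D_{10} \le K$ is solvable and $\pi(K_1)=\pi(N)\cup\{2,5\}=\pi(K)=\pi(G)$; hence $\graphcomp(G)$ is obtained from the triangle-free graph $\graphcomp(K_1)$ by deleting edges (Lemma~\ref{lem:solvablegroups}), contradicting the $\{2,3,5\}$ triangle. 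Symmetrically, if $5 \mid |N|$ one uses $K_2 := N.S_4$. That is the entire proof --- no sections, no Schur multipliers, no module theory.

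The gap in your argument lies exactly where you anticipate it. After factoring out $O_{5'}(N)$ you obtain $\bar K$ with a nontrivial normal $5$-subgroup $P$, but between $P$ and the top quotient $A_6$ there may sit a nontrivial solvable $5'$-group $W$, and $W.A_6$ need not contain $A_6$ as a subgroup (e.g.\ $3.A_6$ does not), so Lemma~\ref{lem:extensions_of_A6} is not directly applicable. The Frattini argument on $S\in\on{Syl}_5(\bar N)$ only yields $N_{\bar K}(S)=S.(5'\text{-group}).A_6$, the same shape again; and coprimality of $5$ to $|M(A_6)|$ governs central $5$-extensions of $A_6$, not whether $A_6$ lifts through an arbitrary solvable $5'$-kernel. (A different salvage does work here: a Sylow $3$-subgroup of $\bar K$ surjects onto the Sylow $3$-subgroup $C_3^2$ of $A_6$, hence is non-cyclic and so cannot act fixed-point-freely on $P$, forcing an element of order $15$ --- but that is not the argument you wrote.) For the prime $3$ the same reduction problem recurs, this time without the easy non-cyclic-Sylow salvage, and your appeal to $\F_3 A_6$-module data presupposes an action of $A_6$ itself on the $3$-chief-factor, whereas what actually acts is the full quotient $K/C_K(V)$, which need not be $A_6$. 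All of this is unnecessary under the paper's two-line approach.
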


\begin{proof}
By Corollary \ref{cor:subgroupcor}, $G$ has a subgroup $K = N.A_6$ where $N$ is solvable and $\pi(K) = \pi(G)$. 

If $3 \mid |N|$, then consider the subgroup $K_1 = N.D_{10}$ of $K$. This is solvable, and $\pi(K_1) = \pi(G)$. Therefore $\graphcomp(G)$ may be reached by removing edges from $\graphcomp(K_1)$. Since $K_1$ is solvable, then $\graphcomp(K_1)$ is $3$-colorable and triangle-free, and so $\graphcomp(G)$ is as well. This contradicts $\{2, 3, 5\}$ forming a triangle in $\graphcomp(G)$, and so $3 \nmid |N|$. 

Similarly if $5 \mid |N|$, consider the subgroup $K_2 = N.S_4$ of $K$. This is solvable, and $\pi(K_2) = \pi(G)$, so by the same argument as above, we reach a contradiction. So $5 \nmid |N|$.
\end{proof}



We will now apply Lemma \ref{lem:extensions_of_A6} with $A_6 = \PSL(2,9)$ and $p = 2$.

\begin{proposition}\label{prop:A6_5-p}
If $G$ is pseudo $A_6$-solvable and $\{2, 3, 5\}$ forms a triangle in $\graphcomp(G)$, then there cannot be an edge $\edge{5}{p}$ in $\graphcomp(G)$ for $p \notin \{2, 3, 5\}$. 
\end{proposition}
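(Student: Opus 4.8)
The plan is to argue by contradiction: assume $\{2,3,5\}$ is a triangle in $\graphcomp(G)$ and that $\edge{5}{p} \in \graphcomp(G)$ for some prime $p \notin \{2,3,5\}$ (so $G$ has no element of order $5p$), and then produce an element of order $5p$. First I would invoke Lemma \ref{lem:A6_subgrouplemma} to pass to a subgroup $K = N.A_6$ with $N$ solvable, $\pi(K) = \pi(G)$, and $(|N|,15) = 1$. Since $p \notin \pi(A_6) = \{2,3,5\}$ while $\pi(K) = \pi(G)$, we must have $p \mid |N|$. The whole argument then reduces to finding an element of order $5p$ inside $K$.

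The engine for producing that element is the representation-theoretic Lemma \ref{lem:rep_theory_biz}, which however requires the coprimality $(|N|,|A_6|)=1$. Since $(|N|,15)=1$ already yields $3,5 \nmid |N|$, the only thing to rule out is $2 \mid |N|$, so the crucial intermediate step is to prove $2 \nmid |N|$. Suppose instead $2 \mid |N|$. Because $3 \nmid |N|$, a Sylow $3$-subgroup $T$ of $K$ meets $N$ trivially and is a Sylow $3$-subgroup of $A_6$, hence $T \cong C_3 \times C_3$; moreover $NT$ is a solvable subgroup of the form $N.T$. Taking (via Lemma \ref{lem:Hallsubgroups} and Schur--Zassenhaus) a Hall $\{2,3\}$-subgroup of $NT$ produces a group $P \rtimes T$ with $P \neq 1$ a Sylow $2$-subgroup of $N$. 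A noncyclic $3$-group cannot act Frobeniusly (it is not a Frobenius complement), so some order-$3$ element of $T$ centralizes a nontrivial element of $P$, yielding an element of order $6$ in $G$; this order-$6$ obstruction is exactly the content of the $p=2$ instance of Lemma \ref{lem:extensions_of_A6} applied to a $2$-group base. An element of order $6$ contradicts $\{2,3,5\}$ being a triangle, so $2 \nmid |N|$.

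With $2 \nmid |N|$ in hand we have $(|N|,|A_6|) = (|N|, 2^3 3^2 5) = 1$, so $K = N.A_6$ is a coprime extension and Lemma \ref{lem:rep_theory_biz} becomes available with $T = A_6$. To invoke it I need an order-$5$ element of $A_6$ whose image under every irreducible complex representation has a nonzero fixed space. This is a finite check on the character table of $A_6$ (or a GAP computation): writing $d_\chi = \tfrac15\sum_{k=0}^4 \chi(t^k) = \tfrac15\big(\chi(1)+2\chi(5A)+2\chi(5B)\big)$ for the multiplicity of the trivial eigenvalue, one finds $d_\chi \ge 1$ for each of the seven irreducibles (degrees $1,5,5,8,8,9,10$), so every order-$5$ element of $A_6$ fixes a vector in each irreducible representation. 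Applying Lemma \ref{lem:rep_theory_biz} with the prime $5 \in \pi(A_6)$ in the role of its ``$p$'' and the prime $p \in \pi(N)$ in the role of its ``$q$'' then yields an element of order $5p$ in $K \le G$, contradicting $\edge{5}{p} \in \graphcomp(G)$ and finishing the proof.

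I expect the main obstacle to be the structural step $2 \nmid |N|$: the entire argument hinges on upgrading $(|N|,15)=1$ to full coprimality with $|A_6|$, since only then does the irreducible-representation machinery of Lemma \ref{lem:rep_theory_biz} apply, and this is precisely where the order-$6$ obstruction (the $p=2$ use of Lemma \ref{lem:extensions_of_A6}, equivalently the fact that the noncyclic Sylow $3$-subgroup $C_3 \times C_3$ of $A_6$ cannot act Frobeniusly) must be exploited. The character-table verification is routine but load-bearing: were some irreducible of $A_6$ to admit an order-$5$ element acting fixed-point-freely, Lemma \ref{lem:rep_theory_biz} could not be invoked and a different construction of the order-$5p$ element would be required.
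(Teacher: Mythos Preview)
Your proof is correct and reaches the same conclusion via the same two main tools (Lemma~\ref{lem:A6_subgrouplemma} and Lemma~\ref{lem:rep_theory_biz}), but the reduction to a coprime extension is organized differently from the paper's argument.

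The paper does \emph{not} prove $2 \nmid |N|$. Instead it passes to a Hall $\{2,3,5,p\}$-subgroup $H_\pi = B.A_6$ (with $B$ a $\{2,p\}$-group), takes a chief series of $B$, and shows that the \emph{bottom} chief factor $B_k$ must be a $p$-group: if $B_k$ were a nontrivial $2$-group, then Lemma~\ref{lem:extensions_of_A6} applied to $B_k.A_6 = B_k.\PSL(2,9)$ would force an element of order $6$. Lemma~\ref{lem:rep_theory_biz} is then applied only to the coprime quotient $B_k.A_6$. Your route instead upgrades $(|N|,15)=1$ to $(|N|,|A_6|)=1$ globally by the elementary observation that the Sylow $3$-subgroup $C_3 \times C_3$ of $A_6$ cannot be a Frobenius complement, so any nontrivial Sylow $2$-subgroup of $N$ would already yield an element of order~$6$; you then apply Lemma~\ref{lem:rep_theory_biz} directly to $K = N.A_6$. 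Your argument is slightly more elementary at this step (it uses only the structure of Frobenius complements rather than the full Lemma~\ref{lem:extensions_of_A6}) and avoids the chief-series bookkeeping; the paper's version, on the other hand, fits the template used elsewhere in the paper (e.g.\ for $\PSL(2,8)$ and $\PSL(2,17)$) where one cannot expect $2 \nmid |N|$ and must isolate a coprime quotient at the bottom of a chief series. Both approaches rely on the same character-table fact recorded in Table~\ref{table:2}, and your explicit verification via $d_\chi = \tfrac15(\chi(1)+2\chi(5A)+2\chi(5B))$ is a welcome addition.
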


\begin{proof}
By Lemma \ref{lem:A6_subgrouplemma}, $G$ has a subgroup $K = N.A_6$ where $N$ is solvable, $\pi(K) = \pi(G)$, and $(|N|, 15) = 1$. 

Set $\pi = \{2, 3, 5, p\}$ and note that $K$ is $\pi$-separable. Thus, we can take a Hall $\pi$-subgroup $H_\pi$ of $K$. By Lemma \ref{lem:Hallsubgroups}, $H_\pi = B.A_6$, where $B$ is a Hall $\{2, p\}$-subgroup of $N$. Consider the chief series of $B$, which takes the form $B = B_1.B_2. \cdots . B_{k-1}.B_k$. As a property of the chief series, each chief factor $B_i$ is either a $p$-group or a $2$-group. 

If $p \nmid |B|$, then $p \notin \pi(K) = \pi(G)$, so clearly $\edge{5}{p} \notin \graphcomp(G)$, as desired. Otherwise, if $p \mid |B|$, then $B$ is nontrivial, so $k \geq 1$. We claim $B_k$ is a $p$-group. If instead $B_k$ were a nontrivial $2$-group, then $B_k.A_6$ contains an element of order $6$ by Lemma \ref{lem:extensions_of_A6}. Since $B_k.A_6$ is a quotient of the subgroup $H_\pi$ of $G$, then $G$ contains an element of order $6$ as well. This contradicts the triangle $\{2, 3, 5\}$ in $\graphcomp(G)$. Therefore, $B_k$ is indeed a $p$-group. 

Consider $B_k.A_6$. As reported in Table \ref{table:2}, we determined computationally that every complex irreducible representation $\rho$ of $A_6$ has the property that for every $t \in A_6$ with $|t| = 5$, $\rho(t)$ has a fixed point. Since the extension $B_k.A_6$ is coprime, we may apply Lemma \ref{lem:rep_theory_biz} to see that $B_k.A_6$ has an element of order $5p$. Therefore, since $B_k.A_6$ is a quotient of a subgroup of $G$, $G$ has an element of order $5p$. Therefore, $\edge{5}{p} \notin \graphcomp(G)$, as desired.
\end{proof}

We are now ready for our main classification result for $A_6$.

\begin{theorem}\label{thm:A6-classification}
Let $\Gamma$ be a graph. Then the following are equivalent:
\begin{enumerate}
    \item $\Gamma$ is the prime graph of a pseudo $A_6$-solvable group $G$.
    \item $\graphcomp$ is $3$-colorable and has at most one triangle, which must be isolated if it exists.
\end{enumerate}
\end{theorem}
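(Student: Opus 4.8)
The plan is to prove the two directions of the equivalence separately, leveraging all the machinery already established for $A_6$. For the direction $(1) \Rightarrow (2)$, I would argue as follows. Suppose $\Gamma = \Gamma(G)$ for a pseudo $A_6$-solvable group $G$. By Corollary \ref{cor:A6-3colorable}, $\graphcomp(G)$ is already $3$-colorable, and by Corollary \ref{cor:A6-onetriangle}, it has at most one triangle, which must be $\{2,3,5\}$ if it exists. So the only thing left to verify is that this triangle is \emph{isolated}. If no triangle exists, we are done, so assume $\{2,3,5\}$ forms a triangle. I would show no vertex of the triangle connects to any prime $p \notin \{2,3,5\}$. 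For the vertex $3$, this is immediate from Lemma \ref{lem:A6_3-p}, which already rules out any edge $\edge{3}{p}$. For the vertex $5$, this is exactly the content of Proposition \ref{prop:A6_5-p}, which forbids an edge $\edge{5}{p}$ when the triangle is present. The remaining case is the vertex $2$: I need a lemma (analogous to Proposition \ref{prop:A6_5-p}) showing that when $\{2,3,5\}$ is a triangle, there is no edge $\edge{2}{p}$ for $p \notin \{2,3,5\}$. The most likely route is the same representation-theoretic argument used in Proposition \ref{prop:A6_5-p}: apply Lemma \ref{lem:A6_subgrouplemma} to get $K = N.A_6$ with $(|N|,15)=1$, pass to a suitable Hall subgroup, and use Lemma \ref{lem:rep_theory_biz} together with a computation (as in Table \ref{table:2}) verifying that every complex irreducible representation of $A_6$ sends order-$2$ elements to matrices with a fixed point. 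Combining these three facts, every neighbor of $2$, $3$, or $5$ lies inside $\{2,3,5\}$, so the triangle is isolated.

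For the direction $(2) \Rightarrow (1)$, I would construct a pseudo $A_6$-solvable group realizing a given $\Gamma$ satisfying condition (2). The argument splits on whether $\graphcomp$ contains the isolated $\{2,3,5\}$ triangle. If $\graphcomp$ is triangle-free and $3$-colorable, then by Lemma \ref{lem:solvablegroups} (equivalently \cite[Theorem 2.8]{solvable_groups}) there is a solvable group $G$ with $\Gamma \cong \Gamma(G)$, and $G$ is trivially pseudo $A_6$-solvable. If $\graphcomp$ has an isolated triangle $\{a,b,c\}$, then the complement $\graphcomp \setminus \{a,b,c\}$ is still $3$-colorable and triangle-free, so by the construction in \cite[Theorem 2.8]{solvable_groups} there is a solvable group $H$ with $\graphcomp(H) \cong \graphcomp \setminus \{a,b,c\}$ and whose order avoids the primes $2,3,5$. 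Then $G = A_6 \times H$ is pseudo $A_6$-solvable: since $\pi(A_6) = \{2,3,5\}$ is disjoint from $\pi(H)$ and $A_6$ has elements of every order $2,3,5$ but no element of order $6$, $10$, or $15$ (i.e. $\{2,3,5\}$ is a triangle in $\graphcomp(A_6)$ that is moreover isolated from $\pi(H)$ in the direct product), we obtain $\graphcomp(G) \cong \graphcomp$ as desired. This is considerably simpler than the $\PSL(2,7)$ construction in Theorem \ref{thm:PSL_classification}, precisely because the triangle is isolated and no vertex of it connects to the rest of the graph, so no delicate coloring-compatible semidirect product is needed.

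The main obstacle is the vertex-$2$ case in the forward direction, i.e. proving there is no edge $\edge{2}{p}$ for $p \notin \{2,3,5\}$ when the triangle is present. Unlike the vertex-$3$ case (handled purely group-theoretically via the non-existence of Frobenius complements with noncyclic $3$-quotients in Lemma \ref{lem:A6_3-p}), the prime $2$ does not enjoy such a clean structural obstruction, so I expect to need the representation-theoretic machinery of Lemma \ref{lem:rep_theory_biz}. The crux is verifying the fixed-point hypothesis: that every irreducible complex representation $\rho$ of $A_6$ satisfies $\rho(t)$ having a fixed point for every element $t$ of order $2$. This can be checked against the character table of $A_6$ (computationally, as recorded in Table \ref{table:2}), by confirming that each nontrivial irreducible character $\chi$ of $A_6$ has $\chi(t) > -\chi(1)$ on the involution class, equivalently that the eigenvalue $1$ appears in $\rho(t)$; equivalently one checks $\frac{1}{|t|}\sum_{i} \chi(t^i) \neq 0$, i.e. the multiplicity of the trivial constituent in the restriction to $\langle t\rangle$ is positive. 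Once this fixed-point property is in hand, Lemma \ref{lem:rep_theory_biz} immediately yields an element of order $2p$ in $G$, contradicting the edge $\edge{2}{p}$, and the proof closes exactly as in Proposition \ref{prop:A6_5-p}.
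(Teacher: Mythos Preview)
Your proposal is correct and matches the paper's proof in every respect except one: the treatment of the vertex $2$ in the forward direction. The paper does not rerun the representation-theoretic argument of Proposition~\ref{prop:A6_5-p} for order-$2$ elements. Instead, it exploits the containment $A_5 \leq A_6$: from $K = N.A_6$ it passes to the pseudo $A_5$-solvable subgroup $L = N.A_5$, observes that $\pi(L) = \pi(G)$ and that $\graphcomp(L)$ inherits the $\{2,3,5\}$ triangle, and then invokes \cite[Theorem~6.8]{A5_paper} directly to conclude $\edge{2}{p} \notin \graphcomp(L)$, hence $\edge{2}{p} \notin \graphcomp(G)$.

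Your route is equally valid---the fixed-point fact you need for involutions is indeed recorded in Table~\ref{table:2}, and the argument of Proposition~\ref{prop:A6_5-p} (including the chief-series reduction showing the bottom factor $B_k$ is a $p$-group via Lemma~\ref{lem:extensions_of_A6}) goes through verbatim with $2$ in place of $5$. The paper's approach is more economical because it recycles the already-established $A_5$ result rather than repeating the representation-theoretic machinery; yours is more self-contained and avoids the external citation. The backward direction in your proposal is essentially identical to the paper's.
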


\begin{proof}
Suppose condition (1) holds for $\Gamma$, i.e., $\graphcomp = \graphcomp(G)$ for some pseudo $A_6$-solvable group $G$. Then by Corollary \ref{cor:A6-3colorable}, $\graphcomp(G)$ is $3$-colorable, and by Corollary \ref{cor:A6-onetriangle}, $\graphcomp(G)$ has at most one triangle, which must be $\{2, 3, 5\}$ if it exists. If $\graphcomp(G)$ does not contain a triangle, then we are done. If $\graphcomp(G)$ does contain a triangle, it remains to show that the vertices $2$, $3$, and $5$ are not connected to anything outside $\{2, 3, 5\}$. 

Since $\{2, 3, 5\}$ is a triangle in $\graphcomp(G)$, then $G$ must be strictly pseudo $A_6$-solvable. By Lemma \ref{lem:A6_3-p}, the vertex $3$ is not connected to anything outside $\{2, 3, 5\}$. Likewise, by Proposition \ref{prop:A6_5-p}, the vertex $5$ is not connected to anything outside $\{2, 3, 5\}$. By Corollary \ref{cor:subgroupcor}, $G$ contains a subgroup $K = N.A_6$ where $N$ is solvable, and $\pi(K) = \pi(G)$. This group contains a further subgroup $L = N.A_5$, which is pseudo $A_5$-solvable. It also holds that $\pi(L) = \pi(G)$, and so $\graphcomp(G)$ can be realized by removing edges from $\graphcomp(L)$. Since $\graphcomp(G)$ has a $\{2, 3, 5\}$ triangle, then $\graphcomp(L)$ must have one as well. By \cite[Theorem 6.8]{A5_paper}, there is no edge $\edge{2}{p}$ in $\graphcomp(L)$ for $p \notin \{2, 3, 5\}$. Since we may arrive at $\graphcomp(G)$ by removing edges from $\graphcomp(L)$, then $\graphcomp(G)$ has no $\edge{2}{p}$ edges either. Thus, the $\{2, 3, 5\}$ triangle is isolated in $\graphcomp(G)$, and (2) holds. 

Now assume that (2) is true for $\Gamma$. If $\graphcomp$ contains no triangle, then by \cite[Theorem 2.8]{solvable_groups}, we may construct a solvable group $G$ such that $\graphcomp(G) = \graphcomp$. By construction, $G$ is also pseudo $A_6$-solvable, and so we are done in this case. 

Now suppose that $\graphcomp$ does have a triangle. Label the triangle $\{a, b, c\}$. Using the techniques in \cite[Theorem 2.8]{solvable_groups}, we can construct a solvable group $K$ such that $(|K|, |A_6|) = 1$ and $\graphcomp \setminus \{a, b, c\} \cong \graphcomp(K)$. Since the triangle $\{a, b, c\}$ is isolated, and $\graphcomp(A_6)$ is a triangle, we have $\graphcomp = \graphcomp(K \times A_6)$. Thus, condition (1) holds.
\end{proof}


We now turn our attention to the unitary group $U_4(2)$, a $K_3$-group which contains $A_6$. This containment allows us to impose restrictions on prime graphs of pseudo $U_4(2)$-solvable groups, and ultimately achieve a classification result. 

\begin{theorem}\label{thm:U42_classification}
A graph $\Gamma$ is isomorphic to the prime graph of a pseudo $U_4(2)$-solvable group exactly when $\graphcomp$ is $3$-colorable and triangle-free.
\end{theorem}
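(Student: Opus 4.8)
The plan is to follow the template of the proof of Theorem \ref{thm:U33_classification} for $U_3(3)$, with $A_6$ playing the role of $\PSL(2,7)$ and $U_4(2)$ the role of $U_3(3)$; the essential inputs are the containment $A_6 \leq U_4(2)$, the classification of pseudo $A_6$-solvable prime graphs (Theorem \ref{thm:A6-classification}), and the presence of an element of order $6$ in $U_4(2)$. I would prove the two implications separately.

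To see that the graph condition is sufficient, suppose $\graphcomp$ is $3$-colorable and triangle-free. Then by Lemma \ref{lem:solvablegroups} there is a solvable group $G$ with $\graphcomp(G) \cong \graphcomp$, and since every solvable group is vacuously pseudo $U_4(2)$-solvable, this $G$ already witnesses $\Gamma$ as the prime graph of a pseudo $U_4(2)$-solvable group. This direction is immediate.

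To see that the condition is necessary, suppose $\Gamma = \Gamma(G)$ for a pseudo $U_4(2)$-solvable group $G$. If $G$ is solvable, the conclusion follows at once from Lemma \ref{lem:solvablegroups}, so I would assume $G$ is strictly pseudo $U_4(2)$-solvable. Since $U_4(2)$ is a $K_3$-group with $\pi(\Aut(U_4(2))) = \pi(U_4(2))$, Corollary \ref{cor:subgroupcor} produces a subgroup $K_1 = N.U_4(2) \leq G$ with $N$ solvable and $\pi(K_1) = \pi(G)$. Invoking $A_6 \leq U_4(2)$, I pass to the subgroup $K_2 = N.A_6 \leq K_1$, which is pseudo $A_6$-solvable; because $\pi(A_6) = \pi(U_4(2)) = \{2,3,5\}$, we still have $\pi(K_2) = \pi(G)$, so $\graphcomp(G)$ arises from $\graphcomp(K_2)$ by deleting edges. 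Theorem \ref{thm:A6-classification} then guarantees that $\graphcomp(K_2)$ is $3$-colorable and has at most one triangle, which must be the $\{2,3,5\}$ triangle if it exists. Since both $3$-colorability and the ``at most one $\{2,3,5\}$ triangle'' property survive edge deletion, the same holds for $\graphcomp(G)$.

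The final and only substantive step is to rule out the $\{2,3,5\}$ triangle in $\graphcomp(G)$ itself, which I would do using that $U_4(2)$ contains an element of order $6$. Because $U_4(2)$ is the quotient $K_1/N$ of the subgroup $K_1 \leq G$, any order-$6$ element of $U_4(2)$ lifts to an element of $K_1$ whose order is divisible by $6$, so $G$ contains an element of order $6$. Hence $\edge{2}{3} \notin \graphcomp(G)$, destroying the only candidate triangle and leaving $\graphcomp(G)$ both $3$-colorable and triangle-free. I expect no genuine obstacle beyond this lifting observation, which mirrors exactly the order-$6$ argument for $U_3(3)$; the only point requiring a moment's care is that the order of a group element is always divisible by the order of its image under a quotient map, guaranteeing that the order-$6$ element of $U_4(2) \cong K_1/N$ does lift into $K_1$.
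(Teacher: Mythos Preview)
Your proposal is correct and follows essentially the same approach as the paper's own proof. The only minor imprecision is that Theorem~\ref{thm:A6-classification}, as stated, says the unique triangle (if any) must be \emph{isolated} rather than specifically $\{2,3,5\}$; to conclude that the candidate triangle is $\{2,3,5\}$ you should instead cite Corollary~\ref{cor:A6-onetriangle} (together with Corollary~\ref{cor:A6-3colorable} for $3$-colorability), which is exactly what the paper does.
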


\begin{proof}
Suppose $\graphcomp$ is $3$-colorable and triangle-free. By \cite[Theorem 2.8]{solvable_groups}, we can choose a solvable group $G$ such that $\graphcomp \cong \graphcomp(G)$. This group $G$ is trivially pseudo $U_4(2)$-solvable, so $\Gamma$ is isomorphic to the prime graph of a pseudo $U_4(2)$-solvable group. 

Now suppose $\Gamma = \Gamma(G)$ for some pseudo $U_4(2)$-solvable group $G$. If $G$ is solvable, then $\graphcomp$ is already $3$-colorable and triangle-free by Lemma \ref{lem:solvablegroups}, so assume $G$ is strictly pseudo $U_4(2)$-solvable. By Corollary \ref{cor:subgroupcor}, $G$ has a subgroup $K = N.U_4(2) \leq G$, where $\pi(K) = \pi(G)$ and $N$ is solvable. Taking the pseudo $A_6$-solvable subgroup $K_1 = N.A_6$ of $K$, we observe that $\pi(K_1) = \pi(K)$. By Corollary \ref{cor:A6-3colorable}, $\graphcomp(K_1)$ is $3$-colorable. By Corollary \ref{cor:A6-onetriangle}, $\graphcomp(K_1)$ has at most one triangle, which must be the $\{2,3,5\}$ triangle if it exists. Since $\pi(K_1) = \pi(G)$ and $K_1 \leq G$, $\graphcomp(G)$ is obtained by removing edges from $\graphcomp(K_1)$. Thus, $\graphcomp(G)$ is $3$-colorable and contains at most one triangle, which must be $\{2,3,5\}$. But $G$ is strictly pseudo $U_4(2)$-solvable and $U_4(2)$ contains an element of order $6$, so $\edge{2}{3} \notin \graphcomp(G)$. This eliminates the possibility of a $\{2,3,5\}$ triangle, so $\graphcomp(G)$ is triangle-free.
\end{proof}

\section{The Projective Special Linear Group $\PSL(2,8)$}\label{section:psl28}

We now turn to the projective special linear group $\PSL(2, 8)$, which we handle similarly to the case of $A_6$. We first consider $\edge{2}{p}$, $\edge{3}{p}$, and $\edge{7}{p}$ edges, for $p \notin \{2, 3, 7\}$, showing that if $\{2, 3, 7\}$ forms a triangle in $\graphcomp(G)$, then no such edges can exist. In handling the case of $\edge{7}{p}$ edges, we also show that prime graph complements of pseudo $\PSL(2, 8)$-solvable groups are $3$-colorable and have at most one triangle, which must be $\{2, 3, 7\}$ if it exists. This flows cleanly into a classification of prime graphs of pseudo $\PSL(2, 8)$-solvable groups.

We consider the $\edge{2}{p}$ and $\edge{3}{p}$ edges first, since the arguments involved are shorter.

\begin{lemma}\label{lem:psl28_no_3-7}
Let $G$ be a pseudo $\PSL(2,8)$-solvable group. Then $\graphcomp(G) \setminus \{\edge{3}{7}\}$ is $3$-colorable and triangle-free.
\end{lemma}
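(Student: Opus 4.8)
The plan is to recognize this statement as a direct instance of Lemma \ref{lem:remove_p-r_then_solvable}, which reduces the entire problem to a subgroup-existence check inside $\PSL(2,8)$. Since $|\PSL(2,8)| = 504 = 2^3 \cdot 3^2 \cdot 7$, we have $\pi(\PSL(2,8)) = \{2,3,7\}$, and $\PSL(2,8)$ is a $K_3$-group. To conclude that $\graphcomp(G) \setminus \{\edge{3}{7}\}$ is $3$-colorable and triangle-free, I would apply Lemma \ref{lem:remove_p-r_then_solvable} with the assignment $p = 3$, $q = 2$, $r = 7$, so that the removed edge $\edge{p}{r}$ is exactly $\edge{3}{7}$, and $q = 2$ plays the role of the common ``middle'' prime.

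The only hypothesis to verify is that $\PSL(2,8)$ contains a subgroup $S_{pq}$ with $\pi(S_{pq}) = \{3,2\}$ and a subgroup $S_{qr}$ with $\pi(S_{qr}) = \{2,7\}$. This is where all the actual content sits, and it follows from the well-known subgroup structure of $\PSL(2,q)$ for $q = 2^f$. Taking $q = 8$, the normalizer of a Sylow subgroup meeting the cyclic torus of order $q+1 = 9$ is a dihedral group $D_{18}$ of order $18 = 2 \cdot 3^2$, which serves as $S_{pq}$, while the normalizer of the cyclic torus of order $q-1 = 7$ is a dihedral group $D_{14}$ of order $14 = 2 \cdot 7$, which serves as $S_{qr}$. (Alternatively, one could take the Borel subgroup of order $q(q-1) = 56$, a Frobenius group of type $(2,7)$, in place of $D_{14}$.) Both $\{3,2\}$ and $\{2,7\}$ are then realized by genuine subgroups of $\PSL(2,8)$.

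With these two subgroups identified, the hypotheses of Lemma \ref{lem:remove_p-r_then_solvable} are met and the conclusion is immediate; the lemma internally handles the solvable reduction via Corollary \ref{cor:subgroupcor}, the construction of the Hall $\{p,q\}$- and $\{q,r\}$-subgroups, and the Frobenius-digraph argument ruling out directed $3$-paths. I do not expect a genuine obstacle here: the proof should be a one-line invocation of Lemma \ref{lem:remove_p-r_then_solvable}, exactly parallel to the proofs of Theorem \ref{thm:psuedopsl27primegraphs} and Lemma \ref{lem:A6_remove_3-5}. The only point requiring any care is confirming that $D_{18}$ and $D_{14}$ are indeed subgroups of $\PSL(2,8)$ with the claimed prime sets, which is standard; one should simply make sure not to accidentally swap the roles of the two ``outer'' primes $3$ and $7$, since the edge to be removed is $\edge{3}{7}$ and $2$ must be the shared prime appearing in both chosen subgroups.
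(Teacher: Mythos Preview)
Your proposal is correct and matches the paper's proof essentially verbatim: the paper also sets $p=3$, $q=2$, $r=7$ and invokes Lemma~\ref{lem:remove_p-r_then_solvable} using the dihedral subgroups $D_{18}$ and $D_{14}$ of $\PSL(2,8)$. There is nothing to add.
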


\begin{proof}
Let $p=3$, $q=2$, $r=7$. Apply Lemma \ref{lem:remove_p-r_then_solvable} with subgroups $D_{18}, D_{14} \leq G$.
\end{proof}

\begin{lemma}\label{lem:psl28_subgrouplemma}
Let $G$ be strictly pseudo $\PSL(2,8)$-solvable. Then $G$ contains a subgroup $K \cong N.\PSL(2,8)$ such that $\graphcomp(G)$ is $3$-colorable and triangle-free if $3$ or $7$ divides $|N|$. 
\end{lemma}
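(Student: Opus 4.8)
The plan is to reduce the lemma to a single divisibility statement about element orders and then attack that statement through the (modular) representation theory of $\PSL(2,8)$. By Corollary \ref{cor:subgroupcor} I fix the subgroup $K = N.\PSL(2,8) \le G$ with $N$ solvable and $\pi(K) = \pi(G)$, so $\graphcomp(G)$ is obtained from $\graphcomp(K)$ by deleting edges. By Lemma \ref{lem:psl28_no_3-7}, $\graphcomp(G) \setminus \{\edge{3}{7}\}$ is already $3$-colorable and triangle-free, so the whole content of the lemma is to show that the edge $\edge{3}{7}$ is \emph{absent} from $\graphcomp(G)$, i.e.\ that $K$ (hence $G$) contains an element of order $21$, whenever $3 \mid |N|$ or $7 \mid |N|$. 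I will treat the case $7 \mid |N|$; the case $3 \mid |N|$ is entirely parallel, interchanging the two primes and replacing the Sylow $3$-data of $\PSL(2,8)$ (the torus $C_9$, its normalizer $D_{18}$) by the Sylow $7$-data (the torus $C_7$, its normalizer $D_{14}$, and the Borel subgroup $C_2^3 \rtimes C_7$).

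Assume $7 \mid |N|$ and, for contradiction, that $K$ has no element of order $21$. Since a chief series of $K$ may be taken through $N$ and $N$ is solvable, there is a chief factor $V = R/S$ of $K$ with $V \le N$ that is an elementary abelian $7$-group, and $K$ acts on $V$. I examine $C_K(V)$. Because $K/N \cong \PSL(2,8)$ is simple, $N C_K(V)/N$ is either trivial or all of $K/N$, which gives a clean dichotomy. If $C_K(V) \not\le N$, then $N C_K(V) = K$, so $C_K(V)$ surjects onto $\PSL(2,8)$ and in particular $3 \mid |C_K(V)|$; a nontrivial $3$-element $y \in C_K(V)$ then centralizes $V \ne 1$, and a standard coprime-action argument (the fixed-point lifting used in Lemma \ref{lem:rep_theory_biz}, applied to $\langle y\rangle$ acting on a $y$-invariant Sylow $7$-subgroup of $N$) upgrades this to an actual order-$7$ element of $K$ commuting with $y$, producing an element of order $21$ --- a contradiction. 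Hence $C_K(V) \le N$, and $\hat K := K/C_K(V)$ acts faithfully and irreducibly on $V$ with the shape $\hat K = \ol{N}.\PSL(2,8)$, where $\ol{N} = N/C_K(V)$ is solvable.

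The remaining task is therefore purely representation-theoretic: to show that some order-$3$ element of $\hat K$ fixes a nonzero vector of $V$, whence (as above) $K$ has an element of order $21$. The key input is that $\PSL(2,8)$ has a fixed-point property in characteristic $7$: every element of order $3$ acts with a nonzero fixed space on every irreducible $\F_7\PSL(2,8)$-module. This is a finite check that can be carried out in GAP \cite{GAP}, in the spirit of the complex fixed-point data recorded in Tables \ref{table:2} and \ref{table:3}. To transfer this from $\PSL(2,8)$ to the $\hat K$-module $V$ in the presence of the solvable normal subgroup $\ol{N}$, I would use Clifford theory to pass from $V$ to its $\PSL(2,8)$-constituents and reduce to the faithful irreducible $\F_7\PSL(2,8)$-modules to which the fixed-point data applies.

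The main obstacle is precisely this last transfer. Since both $3$ and $7$ divide $|\PSL(2,8)| = 2^3\cdot 3^2\cdot 7$, the coprime hypothesis of Lemma \ref{lem:rep_theory_biz} fails for the pair (order-$3$ element, $7$-group): we are in modular characteristic, Maschke's theorem is unavailable over $\F_7$, and the clean complex-representation argument cannot be invoked directly on all of $\PSL(2,8)$. One must instead work with the modular irreducibles of $\PSL(2,8)$ and, more delicately, control the solvable normal part $\ol{N}$ sitting below $\PSL(2,8)$ in $\hat K$. This is the step where the heavier machinery is required --- the representation data of $\PSL(2,8)$, and, should the normal subgroup $\ol{N}$ resist reduction by Clifford theory alone, the results of Flavell \cite{Flavell}.
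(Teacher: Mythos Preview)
Your reduction is logically sound (absence of $\edge{3}{7}$ in $\graphcomp(G)$ certainly suffices, given Lemma~\ref{lem:psl28_no_3-7}), but you have turned a two-line lemma into an unfinished modular representation theory problem. The paper's argument bypasses the question of whether $K$ contains an element of order $21$ entirely. Recall from Table~\ref{table:1} that $\PSL(2,8)$ contains the solvable subgroups $D_{14}$ (with prime set $\{2,7\}$) and $D_{18}$ (with prime set $\{2,3\}$). If $3 \mid |N|$, then $K_1 = N.D_{14} \le K$ is solvable and $\pi(K_1) = \pi(N) \cup \{2,7\} = \pi(N) \cup \{2,3,7\} = \pi(K) = \pi(G)$; hence $\graphcomp(G)$ is obtained from $\graphcomp(K_1)$ by deleting edges, and Lemma~\ref{lem:solvablegroups} finishes. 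If $7 \mid |N|$, the same argument works with $K_2 = N.D_{18}$. That is the whole proof.

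The gap in your approach is real, not cosmetic: you need a $3$-element of $\hat K = \ol{N}.\PSL(2,8)$ to fix a nonzero vector in an irreducible faithful $\F_7\hat K$-module $V$, but since $7 \mid |\PSL(2,8)|$ you are in defining (or at least non-coprime) characteristic, Maschke fails, and the Clifford transfer through the solvable layer $\ol N$ is exactly the delicate step you were unable to complete. Invoking Flavell here is both unjustified (his hypotheses are not set up) and disproportionate. The moral is the same one used repeatedly in this paper: when a prime you need already lives in $N$, look for a solvable subgroup of $T$ that covers the \emph{remaining} primes of $T$, and reduce to Lemma~\ref{lem:solvablegroups} rather than chasing a specific element order.
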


\begin{proof}
It is immediate from Corollary \ref{cor:subgroupcor} that $G$ has a subgroup $K \cong N.\PSL(2,8)$ with $\pi(G)=\pi(K)$. If $3 \mid |N|$, then $K$ has a solvable subgroup isomorphic to $N.D_{14}$ whose order has the same prime divisors as $G$. This implies that $\graphcomp(G)$ is $3$-colorable and triangle-free. Likewise if $7 \mid |N|$ then $N.D_{18} \leq K$ with the same prime divisors so $\graphcomp(G)$ is $3$-colorable and triangle-free.
\end{proof}

\begin{lemma}\label{lem:psl28_no_3p}
Let $G$ be pseudo $\PSL(2, 8)$-solvable and $p \notin \{2, 3, 7\}$. If a $\{2, 3, 7\}$ triangle exists in $\graphcomp(G)$, then $\edge{3}{p}$ is not an edge in $\graphcomp(G)$. 
\end{lemma}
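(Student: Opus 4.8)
The plan is to argue by contradiction, following the template of Proposition \ref{prop:no3pedges} (the $\PSL(2,7)$ case) but replacing the appeal to Lemma \ref{lem:extensions_of_A6} — which is unavailable here since $\PSL(2,8)=\PSL(2,q)$ has \emph{even} $q$ — by a nilpotency argument together with the representation theory of $\PSL(2,8)$. So suppose a $\{2,3,7\}$ triangle exists in $\graphcomp(G)$ and, for contradiction, that $\edge{3}{p}$ is also an edge; thus $G$ has no element of order $6$ and no element of order $3p$. The triangle forces $G$ to be strictly pseudo $\PSL(2,8)$-solvable, so by Lemma \ref{lem:psl28_subgrouplemma} (equivalently Corollary \ref{cor:subgroupcor}) $G$ has a subgroup $K\cong N.\PSL(2,8)$ with $N$ solvable and $\pi(K)=\pi(G)$; moreover, since the existence of the triangle contradicts the conclusion of Lemma \ref{lem:psl28_subgrouplemma}, we must have $3\nmid|N|$ and $7\nmid|N|$. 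Setting $\pi=\{2,3,7,p\}$, the group $K$ is $\pi$-separable, so I would take a Hall $\pi$-subgroup $H_\pi$, which by Lemma \ref{lem:Hallsubgroups} has the form $H_\pi=B.\PSL(2,8)$ where $B$ is a Hall $\{2,p\}$-subgroup of $N$ (the $3$- and $7$-parts of $N$ being trivial). We may assume $p\mid|B|$, since otherwise $p\notin\pi(G)$ and there is no vertex $p$ at all.

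Next I would extract an order-$3$ element and show it acts without fixed points on $B$. Because $3\nmid|B|$, a Sylow $3$-subgroup of $H_\pi$ maps isomorphically onto one of $\PSL(2,8)$, so I can choose $t\in H_\pi$ of order exactly $3$ mapping to an order-$3$ element of $\PSL(2,8)$. Consider $C_B(t)$: a nontrivial fixed element of $B$ has a power of order $2$ or of order $p$ (as $\pi(B)\subseteq\{2,p\}$), and since $t$ commutes with it this would produce an element of order $6$ or of order $3p$ in $G$ — both excluded by our assumptions. Hence $C_B(t)=1$, so $t$ induces a fixed-point-free automorphism of $B$ of prime order $3$. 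By Thompson's theorem on fixed-point-free automorphisms of prime order, $B$ is nilpotent, so $B=B_2\times B_p$ with $B_p\neq 1$ characteristic in $B$ and therefore normal in $H_\pi$.

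To finish, I would pass to the quotient $H_\pi/B_2\cong B_p.\PSL(2,8)$. Since $\gcd(|B_p|,|\PSL(2,8)|)=1$ (as $p\neq 2,3,7$), Schur–Zassenhaus splits this extension, giving an action of $\PSL(2,8)$ on $B_p$ to which Lemma \ref{lem:rep_theory_biz} applies. The required input is the character-theoretic fact that every complex irreducible representation $\rho$ of $\PSL(2,8)$ sends every order-$3$ element $t$ to a matrix with a fixed point, i.e.\ $\dim\ker(\rho(t)-I)=\tfrac13\bigl(\rho(1)+\chi_\rho(t)+\chi_\rho(t^2)\bigr)>0$ for all $\rho$; this is verified by a direct computation with the character table of $\PSL(2,8)$ (whose irreducible degrees are $1,7,8,9$), exactly as was done for $A_6$ in Table \ref{table:2}. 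Lemma \ref{lem:rep_theory_biz} then yields an element of order $3p$ in $B_p.\PSL(2,8)=H_\pi/B_2$, which lifts to an element of order divisible by $3p$, and hence to one of order exactly $3p$, in $H_\pi\leq G$. This contradicts the assumption that $\edge{3}{p}\in\graphcomp(G)$, completing the proof.

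The main obstacle, and the one genuine departure from the $\PSL(2,7)$ and $A_6$ arguments, is that the even-characteristic group $\PSL(2,8)$ lies outside the hypotheses of Lemma \ref{lem:extensions_of_A6}, so I cannot rule out a $2$-group appearing as the relevant chief factor by producing an element of order $6$ in the style of Proposition \ref{prop:A6_5-p}. The nilpotency of $B$ obtained from the fixed-point-free action is precisely what lets me strip away the $2$-part and isolate a $\PSL(2,8)$-invariant, coprime $p$-section $B_p$ on which the representation-theoretic Lemma \ref{lem:rep_theory_biz} can be deployed. I would double-check two small points in the write-up: that $t$ can be chosen of order exactly $3$ (which uses $3\nmid|B|$), and that the fixed-point property must be confirmed for \emph{all} order-$3$ elements and all irreducibles, not merely one representative.
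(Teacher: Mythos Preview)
Your argument is correct. The one point to firm up is the verification that every order-$3$ element of $\PSL(2,8)$ has a fixed point in every complex irreducible; this is not recorded in the paper's Table~\ref{table:2}, so you should either include the short character computation you outline (the irreducible degrees are $1,7,7,7,7,8,9,9,9$, the order-$3$ elements form a single conjugacy class inverted in $D_{18}$, and one checks $\chi(t)\in\{1,-1,0,1,-2\}$ accordingly) or cite a GAP check.

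Your route, however, differs from the paper's. You use the assumed $\edge{2}{3}$ and $\edge{3}{p}$ edges to force $C_B(t)=1$, invoke Thompson's fixed-point-free automorphism theorem to make $B$ nilpotent, peel off the Sylow $2$-part $B_2$, and then apply Lemma~\ref{lem:rep_theory_biz} to the coprime extension $B_p.\PSL(2,8)$. The paper instead observes that since $p\mid|A|$ the group $A$ is not a $2$-group and appeals directly to \cite[Proposition~4.2]{3-centralizers}, a companion result to Lemma~\ref{lem:extensions_of_A6} from the same source, which guarantees that every order-$3$ element already has a nontrivial fixed point on $A$; the $\edge{2}{3}$ edge then forces that fixed point to have order $p$, and the contradiction is immediate. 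The paper's argument is shorter and avoids both Thompson's theorem and the character-table check, at the cost of importing an external proposition; your argument is more self-contained within the machinery the paper has already set up (Lemma~\ref{lem:rep_theory_biz}) and mirrors the pattern of Proposition~\ref{prop:A6_5-p}, which may be pedagogically preferable.
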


\begin{proof}
Since $\graphcomp(G)$ contains a triangle, $G$ is not solvable. Then by Lemma \ref{lem:psl28_subgrouplemma}, $G$ has a subgroup $K = N.\PSL(2, 8)$ such that $N$ is solvable and $\pi(G) = \pi(K)$. Note that since we add edges to $\graphcomp(G)$ to get to $\graphcomp(K)$, then $\{2, 3, 7\}$ forms a triangle in $\graphcomp(K)$. Lemma \ref{lem:psl28_subgrouplemma} further states that if $3 \mid |N|$ or $7 \mid |N|$, then $\graphcomp(G)$ is triangle-free, a contradiction. So $3, 7 \nmid |N|$. 

Let $\pi = \{2,3,7,p\}$. Since $K$ is $\pi$-separable, we can take a Hall $\pi$-subgroup $H_{\pi}$. By Lemma \ref{lem:Hallsubgroups}, we can write $H_{\pi} = A.\PSL(2,8)$, where $A$ is a Hall $\{2, p\}$-subgroup of $N$. Since $p \mid |A|$, $A$ is not a $2$-group. Then by \cite[Proposition 4.2]{3-centralizers}, every element of order $3$ in $K$ has a fixed point acting on $A$. Choose $t \in \PSL(2,8)$ of order $3$. Since $A$ is a $\{2, p\}$-group, then $t$ must fix an element of order $2$ or an element of order $p$. Since $\edge{2}{3} \in \graphcomp(K)$, $t$ must fix an element of order $p$, eliminating the edge $\edge{3}{p}$ from $\graphcomp(K)$. Therefore, $\edge{3}{p} \notin \graphcomp(G)$.
\end{proof}

\begin{lemma}\label{lem:psl28_no_2-p}
Let $G$ be a strictly pseudo $\PSL(2,8)$-solvable group and $p \notin \{2,3,7\}$. Then $\edge{2}{p} \notin \graphcomp(G)$. 
\end{lemma}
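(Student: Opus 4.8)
The plan is to prove the contrapositive form directly: I will exhibit an element of order $2p$ in $G$, which is exactly the statement $\edge{2}{p} \notin \graphcomp(G)$. By Corollary \ref{cor:subgroupcor} I may pass to a subgroup $K \cong N.\PSL(2,8)$ with $N$ solvable and $\pi(K) = \pi(G)$; since $p \notin \{2,3,7\} = \pi(\PSL(2,8))$, necessarily $p \mid |N|$, and it suffices to find an element of order $2p$ in $K \le G$. The engine of the argument is a character-theoretic fact about $\PSL(2,8)$: it has a single class of involutions $t$, and reading off the character table (the degrees are $1$, $7$ (four times), $8$, and $9$ (three times), with values $1$, $-1$, $0$, $1$ on the involution class respectively), none of these values equals the negative of the corresponding degree. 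Hence $\rho(t)$ has eigenvalue $1$ for every irreducible complex representation $\rho$, so $t$ satisfies the hypothesis of Lemma \ref{lem:rep_theory_biz}. Consequently, whenever $\PSL(2,8)$ acts coprimely on a nontrivial $p$-group $V$, the semidirect product $V \rtimes \PSL(2,8)$ contains an element of order $2p$.

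I would then split into two cases according to whether $2 \mid |N|$. First suppose $2 \nmid |N|$. Then a Sylow $2$-subgroup of $K$ equals a Sylow $2$-subgroup $S \cong C_2^{3}$ of $\PSL(2,8)$ (elementary abelian of order $8$), and the preimage $N.S$ of $S$ in $K$ is solvable, hence has a Hall $\{2,p\}$-subgroup $H$. By Lemma \ref{lem:Hallsubgroups}, $H = P \rtimes S$ where $P$ is a nontrivial Sylow $p$-subgroup of $N$. If $H$ contained no element of order $2p$, then no nonidentity element of $S$ could centralize a nontrivial element of $P$, i.e. $S$ would act fixed-point-freely on $P$ and thus be a Frobenius complement; but Frobenius complements have cyclic or generalized quaternion Sylow $2$-subgroups, and $C_2^{3}$ is neither. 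This contradiction yields an element of order $2p$, so $\edge{2}{p} \notin \graphcomp(G)$ in this case.

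The substantive case is $2 \mid |N|$, which I expect to be the main obstacle. Here the primes $2$, $3$, $7$ of $\PSL(2,8)$ are entangled with $N$, so one cannot isolate a solvable Hall $\{2,p\}$-subgroup as above, and the extension $N.\PSL(2,8)$ is no longer coprime, blocking a direct appeal to Lemma \ref{lem:rep_theory_biz}. My plan is to reduce to a coprime action: replacing $K$ by its Hall $\{2,3,7,p\}$-subgroup $A.\PSL(2,8)$ (with $A$ a solvable $\{2,3,7,p\}$-group satisfying $p \mid |A|$), and quotienting by $D = O_{p'}(A)$, the Hall--Higman structure of the resulting $p$-separable group produces a nontrivial normal $p$-subgroup on which $\PSL(2,8)$ acts coprimely. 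The delicate point is that this extension need not split off a clean complement $\PSL(2,8)$ acting on the $p$-group, so securing the required fixed point of an involution of $\PSL(2,8)$ is exactly where the deep theorem of Flavell \cite{Flavell} must be invoked. Once a nontrivial coprime fixed point is obtained, the character-theoretic property above (through Lemma \ref{lem:rep_theory_biz}) furnishes an element of order $2p$, completing the proof. The crux of the whole argument is therefore the non-coprime case, and the principal difficulty is marshaling Flavell's result to bridge from the non-coprime extension to the coprime fixed-point statement.
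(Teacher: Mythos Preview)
Your proposal is far more elaborate than necessary, and the case $2 \mid |N|$ is not actually proved---you only sketch a plan invoking Flavell, and that plan is both incomplete and misdirected. Flavell's theorem is used in the paper for the $\edge{7}{p}$ edge (Proposition~\ref{prop:psl28_no_7-p_or_3-col}), where the relevant prime $7$ is not Fermat; it plays no role whatsoever for the $\edge{2}{p}$ edge, and your outline does not explain how a statement about $[Q,R]=1$ would yield a fixed point for an involution.

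The real point is that your case split is unnecessary: the Frobenius-complement argument you give in case~1 already works uniformly. Inside $K = N.\PSL(2,8)$ take the solvable subgroup $N.C_2^{3}$ and its Hall $\{2,p\}$-subgroup $H_{2p}$. By Lemma~\ref{lem:Hallsubgroups}, $H_{2p} = A.C_2^{3}$ where $A$ is a Hall $\{2,p\}$-subgroup of $N$ (possibly with $2 \mid |A|$). If $\edge{2}{p} \in \graphcomp(G)$ then $H_{2p}$ has no element of order $2p$, hence is Frobenius or $2$-Frobenius; because the top quotient is a $2$-group, it must be Frobenius of type $(2,p)$ or $2$-Frobenius of type $(2,p,2)$. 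Either way the upper Frobenius complement is a $2$-group having $C_2^{3}$ as a quotient. But $2$-group Frobenius complements are cyclic or generalized quaternion, and neither has an elementary abelian quotient of rank~$3$ (the abelianization of a generalized quaternion group is $V_4$). That is the whole proof. Your character-theoretic observation about involutions in $\PSL(2,8)$ is correct but superfluous here, since Lemma~\ref{lem:rep_theory_biz} requires a coprime extension and you never succeed in reducing to one.
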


\begin{proof}
By Corollary \ref{cor:subgroupcor}, $G$ has a subgroup $K \cong N.\PSL(2,8)$ where $N$ is solvable and $\pi(K)=\pi(G)$. Suppose by way of contradiction that $\edge{2}{p} \in \graphcomp(G)$. Now, $K$ contains the subgroup $N.C_2^3$ which is solvable and thus has a Hall $\{2,p\}$-subgroup $H_{2p}$. By Lemma \ref{lem:Hallsubgroups}, we can write $H_{2p} = A.C_2^3$, where $A$ is a Hall $\{2,p\}$-subgroup of $N$. Since $\edge{2}{p} \in \graphcomp(G)$, then $\edge{2}{p} \in \graphcomp(H_{2p})$, so $H_{2p}$ is Frobenius or $2$-Frobenius. Moreover, since $H_{2p} = A.C_2^3$, $H_{2p}$ is Frobenius of type $(2,p)$ or $2$-Frobenius of type $(2,p,2)$. Either way, there exists a $2$-group which is a Frobenius complement with quotient $C_2^3$. 

The only $2$-groups which can act Frobeniusly are cyclic or generalized quaternion. But no cyclic group has a noncyclic quotient, and the largest abelian quotient group of a generalized quaternion group is $V_4$. This contradiction implies $\edge{2}{p} \notin \graphcomp(G)$. 
\end{proof}

We have now shown that if $G$ is pseudo $\PSL(2, 8)$-solvable and if $\graphcomp(G)$ contains a $\{2, 3, 7\}$-triangle, then $2$ and $3$ cannot have any neighbors outside the triangle. The same remains to be shown for $7$. Although the argument is longer, it is also more fruitful, providing as corollaries a description of the type and number of triangles in $\graphcomp(G)$, and the fact that $\graphcomp(G)$ is always $3$-colorable. 

\begin{proposition}\label{prop:psl28_no_7-p_or_3-col}
Let $G$ be a pseudo $\PSL(2, 8)$-solvable group. Then at least one of the following holds:
\begin{enumerate}
    \item $\graphcomp(G)$ is $3$-colorable and triangle-free
    \item $\edge{7}{p}$ is not an edge in $\graphcomp(G)$ for any $p \notin \{2, 3, 7\}$.
\end{enumerate}
\end{proposition}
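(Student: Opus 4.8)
The plan is to prove the disjunction by assuming that condition (2) fails --- that is, that $\edge{7}{p} \in \graphcomp(G)$ for some $p \notin \{2,3,7\}$ --- and deriving condition (1). If $G$ is solvable, then $\graphcomp(G)$ is $3$-colorable and triangle-free by Lemma \ref{lem:solvablegroups}, so I would immediately reduce to the case where $G$ is strictly pseudo $\PSL(2,8)$-solvable. Applying Corollary \ref{cor:subgroupcor} produces a subgroup $K \cong N.\PSL(2,8)$ with $N$ solvable and $\pi(K) = \pi(G)$, and it suffices to work inside $K$, since an element of order $7p$ in $K$ rules out the edge $\edge{7}{p}$ in $\graphcomp(G)$.

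The first reduction uses Lemma \ref{lem:psl28_subgrouplemma}: if $3 \mid |N|$ or $7 \mid |N|$, then $\graphcomp(G)$ is already $3$-colorable and triangle-free, so condition (1) holds and we are done. Thus the entire content of the proposition lies in the case $3, 7 \nmid |N|$, where $N$ shares \emph{only} the prime $2$ with $\PSL(2,8)$. Here I would aim to show that no $\edge{7}{p}$ edge can occur at all, i.e. that $K$ has an element of order $7p$ for every $p \in \pi(N) \setminus \{2\}$, which gives condition (2). Since $7 \nmid |N|$, a Sylow $7$-subgroup of $K$ is cyclic of order $7$, and a generator $t$ projects to an order-$7$ element of $\PSL(2,8)$ and acts coprimely on $N$; the goal becomes showing $p \mid |C_N(t)|$.

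The engine for this is the representation-theoretic fact, verified from the character table of $\PSL(2,8)$, that \emph{every} irreducible complex representation of $\PSL(2,8)$ fixes a nonzero vector under each element of order $7$: the trivial, Steinberg, principal series, and cuspidal irreducibles all restrict to $\langle t\rangle$ with the trivial character appearing. When $2 \nmid |N|$, the extension $K = N.\PSL(2,8)$ is coprime, and Lemma \ref{lem:rep_theory_biz} applies directly to yield an element of order $7p$, completing this subcase.

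The main obstacle is the subcase $2 \mid |N|$. Here coprimality fails, so the relevant elementary abelian $p$-section $V$ is no longer a module for $\PSL(2,8)$ itself but only for an extension of the form $(2\text{-group}).\PSL(2,8)$, and Lemma \ref{lem:rep_theory_biz} no longer applies; the noncyclic $2$-local structure of $\PSL(2,8)$, whose Sylow $2$-subgroup is $C_2^3$, makes the fixed-point question for $t$ on $V$ genuinely delicate. This is exactly where I would invoke the deep theorem of Flavell \cite{Flavell} to control the interaction of the $2$-part of $N$ with $\PSL(2,8)$ and still force $t$ to fix a nontrivial $p$-element, recovering an element of order $7p$ and hence condition (2). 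I expect this $2 \mid |N|$ analysis to be the crux of the whole argument and the only place where a tool as heavy as Flavell's result is required.
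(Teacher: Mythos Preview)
Your proposal is correct and follows essentially the same route as the paper: reduce via Lemma~\ref{lem:psl28_subgrouplemma} to the case $3,7 \nmid |N|$, pass to a section $V.B.\PSL(2,8)$ with $V$ a nontrivial elementary abelian $p$-group and $B$ a (possibly trivial) $2$-group, and then use Flavell's theorem---applied inside the subgroup $B.(C_2^3 \rtimes C_7)$, exploiting that $7$ is not a Fermat prime---to show that a Sylow $7$-subgroup cannot act fixed-point-freely on $V$. The paper does not split off the coprime case $2 \nmid |N|$ separately; your use of Lemma~\ref{lem:rep_theory_biz} there is valid (the character-table check that order-$7$ elements fix a vector in every irreducible of $\PSL(2,8)$ is correct) but unnecessary, since the Flavell argument already covers it uniformly by allowing $B = 1$.
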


\begin{proof}
If $G$ is solvable, then by Lemma \ref{lem:solvablegroups} we are done. So assume $G$ is strictly pseudo $\PSL(2,8)$-solvable. By Lemma \ref{lem:psl28_subgrouplemma}, $G$ has a subgroup $K = N.\PSL(2, 8)$ such that $N$ is solvable and $\pi(G) = \pi(K)$. Lemma \ref{lem:psl28_subgrouplemma} further states that if $3 \mid |N|$ or $7 \mid |N|$, then $\graphcomp(G)$ is $3$-colorable and triangle-free, and so we are done in either of those cases. Therefore, we may assume $3, 7 \nmid |N|$.

Let $p \notin \{2, 3, 7\}$. Setting $\pi = \{2, 3, 7, p\}$, we may take a Hall $\pi$-subgroup $H_\pi \leq K$, which by Lemma \ref{lem:Hallsubgroups} takes the form $H_\pi = H_{2p}.\PSL(2, 8)$, where $H_{2p}$ is a Hall $\{2, p\}$-subgroup of $N$. By considering the chief series of $H_\pi$, we get that $H_\pi$ has a quotient $A = V.B.\PSL(2, 8)$, where $V$ is a nontrivial elementary abelian $p$-group and $B$ is a possibly trivial $2$-group. 

If we set $L = B.\PSL(2, 8)$, then $V$ is a faithful, irreducible $L$-module over $\F_p$. Recall that $\PSL(2, 8)$ has a Hall $\{2, 7\}$-subgroup $S \cong C_2^3 \rtimes C_7$, where $C_7$ acts on $C_2^3$ Frobeniusly. Then $L$ has a subgroup $H = B.S = B.C_2^3.C_7$. Then, $V$ is a faithful $H$-module over $\F_p$. Let $Q \in \on{Syl}_2(H)$ and let $R \in \on{Syl}_7(H)$. Then $Q = B.C_2^3$ and $H = Q.R = QR$, the last equality coming from Schur-Zassenhaus.

Suppose for contradiction that $R$ acts Frobeniusly on $V$. Then $C_V(R)$ is trivial. By Maschke's Theorem, since $H$ acts coprimely on $V$, then $V$ is completely reducible, so $V = V_1 \oplus V_2 \oplus \cdots \oplus V_\ell$, where each $V_i$ is an irreducible $H$-module, and $\ell$ is a positive integer.

Let $K_i \unlhd H$ be the kernel of the action of $H$ on $V_i$, for $i = 1, \dots, \ell$. Then $H/K_i$ acts faithfully and irreducibly on $V_i$. Let $K$ be the kernel of the action of $H$ on $V$. Since $V$ is a faithful $H$-module, then $\bigcap_{i=1}^\ell K_i \leq K = 1$. Now,
\[
    H \cong H / \bigcap_{i=1}^\ell K_i \lesssim H/K_1 \times H/K_2 \times \cdots \times H/K_\ell.
\]
Note that since $R$ acts Frobeniusly on $V$, then $R \cap K_i = 1$ for each $i = 1, \dots, \ell$. Assume for contradiction that $R \cong RK_i / K_i$ acts trivially on $Q_i \in \on{Syl}_2(H/K_i)$ for every $i = 1, \dots, \ell$. Then $H / K_i \cong R \times Q_i$, and so, 
\[
    H \lesssim \bigtimes_{i=1}^\ell H/K_i \cong \bigtimes_{i=1}^\ell (R \times Q_i) \cong R^\ell \times \left(\bigtimes_{i=1}^\ell Q_i\right). 
\]
Since $H = RQ \lesssim R^\ell \times \bigtimes_{i=1}^\ell Q_i$, 
every element of $R$ commutes with every element of $Q$, so $RQ = R \times Q$. Thus, $R$ acts trivially on $Q$, which is a contradiction, since $R$ acts nontrivially on $C_2^3 \leq \PSL(2, 8)$, and $Q.R = B.C_2^3.R$. Thus, $R$ must act faithfully on some $Q_i$, say $Q_1$. 

Since $R$ acts nontrivially on $Q_1$, then $[R, Q_1] \neq 1$. However, $RQ_1 = H / K_1$ acts faithfully and irreducibly on $V_1$, and $C_{V_1}(R) \leq C_V(R) = 0$. Then by \cite[Theorem A]{Flavell}, $[Q_1, R] = 1$, since $7$ is not a Fermat prime. This is a contradiction. Thus, $R$ does not act Frobeniusly on $V$. This gives us an element of order $7p$ in $V.H$ which implies there exists an element of order $7p$ in $G$. Thus, $\edge{7}{p} \notin \graphcomp(G)$.
\end{proof}

\begin{corollary}\label{cor:psl28_only_2_3_7}
Let $G$ be pseudo $\PSL(2, 8)$-solvable. Then $\{2, 3, 7\}$ is the only possible triangle in $\graphcomp(G)$. 
\end{corollary}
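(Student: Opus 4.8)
The plan is to combine the structural results already established for $\PSL(2,8)$-solvable groups. By Lemma~\ref{lem:psl28_no_3-7}, the graph $\graphcomp(G) \setminus \{\edge{3}{7}\}$ is $3$-colorable and triangle-free, so every triangle in $\graphcomp(G)$ must contain the edge $\edge{3}{7}$. This immediately restricts the possible triangles to those of the form $\{3, 7, x\}$ for some prime $x \in \pi(G)$. The remaining task is to show that the only admissible value of $x$ is $x = 2$, which would force the triangle to be $\{2, 3, 7\}$.

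**Next I would** split into cases according to whether $x \in \{2, 3, 7\}$ or $x \notin \{2, 3, 7\}$. The case $x \in \{2, 3, 7\}$ trivially gives the triangle $\{2, 3, 7\}$ (since the three vertices of a triangle are distinct). So suppose $x = p$ for some prime $p \notin \{2, 3, 7\}$, and assume toward a contradiction that $\{3, 7, p\}$ is a triangle in $\graphcomp(G)$. The key observation is that such a triangle contains the edge $\edge{7}{p}$. By Proposition~\ref{prop:psl28_no_7-p_or_3-col}, one of two things must hold: either $\graphcomp(G)$ is $3$-colorable and triangle-free, or there is no $\edge{7}{p}$ edge for any $p \notin \{2, 3, 7\}$. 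The first alternative directly contradicts the existence of a triangle, and the second directly contradicts the presence of the edge $\edge{7}{p}$ in our supposed triangle. Either way we reach a contradiction, so no $\{3, 7, p\}$ triangle can exist.

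**The main point to verify** is simply that any triangle in $\graphcomp(G)$ must, on one hand, use the edge $\edge{3}{7}$ (forcing the shape $\{3,7,x\}$), and on the other hand, if it involves an outside prime, must use the edge $\edge{7}{p}$ which Proposition~\ref{prop:psl28_no_7-p_or_3-col} forbids. There is no real obstacle here; this is a bookkeeping corollary that assembles Lemma~\ref{lem:psl28_no_3-7} and Proposition~\ref{prop:psl28_no_7-p_or_3-col}. I do not even need to invoke Lemmas~\ref{lem:psl28_no_3p} and~\ref{lem:psl28_no_2-p} at this stage, since those govern the $\edge{3}{p}$ and $\edge{2}{p}$ edges and will be used later to show the triangle is \emph{isolated} rather than merely to show it is $\{2,3,7\}$. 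The whole argument is a short case analysis with no computation.
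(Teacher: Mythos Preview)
Your proposal is correct and follows essentially the same approach as the paper: the paper's proof also first uses Lemma~\ref{lem:psl28_no_3-7} to restrict any triangle to the form $\{3,7,x\}$, then invokes Proposition~\ref{prop:psl28_no_7-p_or_3-col} to rule out $\{3,7,p\}$ for $p \notin \{2,3,7\}$. Your observation that Lemmas~\ref{lem:psl28_no_3p} and~\ref{lem:psl28_no_2-p} are not needed here and are reserved for the isolation argument is also exactly how the paper proceeds.
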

\begin{proof}
By Lemma \ref{lem:psl28_no_3-7}, the only triangles possible in $\graphcomp(G)$ are $\{2, 3, 7\}$ and $\{3, 7, p\}$ for $p \notin \{2, 3, 7\}$. However, a $\{3, 7, p\}$ triangle contradicts Proposition \ref{prop:psl28_no_7-p_or_3-col}, so the only possible triangle in $\graphcomp(G)$ is $\{2, 3, 7\}$.
\end{proof}

\begin{corollary}\label{cor:psl28_no_7-p}
Let $G$ be pseudo $\PSL(2, 8)$-solvable. If a $\{2, 3, 7\}$ triangle exists in $\graphcomp(G)$, then $\edge{7}{p}$ is not an edge in $\graphcomp(G)$.
\end{corollary}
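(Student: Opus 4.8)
The plan is to read this off directly from the dichotomy already established in Proposition~\ref{prop:psl28_no_7-p_or_3-col}. That proposition asserts that for any pseudo $\PSL(2,8)$-solvable group $G$, at least one of two conditions must hold: either $\graphcomp(G)$ is triangle-free (and $3$-colorable), or else $\edge{7}{p}$ fails to be an edge for every $p \notin \{2,3,7\}$. The corollary is then just the extraction of the second branch under an additional hypothesis.

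First I would observe that the hypothesis of the corollary---namely, that a $\{2,3,7\}$ triangle occurs in $\graphcomp(G)$---is flatly incompatible with the first alternative of the proposition, since the presence of any triangle means $\graphcomp(G)$ is not triangle-free. Hence the first condition of Proposition~\ref{prop:psl28_no_7-p_or_3-col} cannot hold, and we are forced into its second condition: $\edge{7}{p} \notin \graphcomp(G)$ for all $p \notin \{2,3,7\}$. This is precisely the desired conclusion, with the convention that $p \notin \{2,3,7\}$ is understood (for $p \in \{2,3\}$ the edge $\edge{7}{p}$ is part of the triangle itself and so cannot be excluded).

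There is no genuine obstacle at this stage; all the substantive work---in particular the coprime-action analysis and the appeal to Flavell's result to rule out a Frobenius action of the Sylow $7$-subgroup on the relevant module---was already carried out in the proof of Proposition~\ref{prop:psl28_no_7-p_or_3-col}. The present corollary merely specializes that dichotomy to the case where a triangle is known to exist, so the proof reduces to a one-line case analysis identifying which branch of the proposition is realized.
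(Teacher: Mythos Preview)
Your proof is correct and matches the paper's approach exactly: the paper states this corollary without proof, treating it as an immediate consequence of Proposition~\ref{prop:psl28_no_7-p_or_3-col}, which is precisely the one-line case analysis you give.
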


\begin{corollary}\label{cor:psl28_3-col}
Let $G$ be pseudo $\PSL(2, 8)$-solvable. Then $\graphcomp(G)$ is $3$-colorable.
\end{corollary}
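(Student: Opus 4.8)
The plan is to mirror the argument used for $A_6$ in Corollary \ref{cor:A6-3colorable}, exploiting the fact that the heavy lifting has already been done in Proposition \ref{prop:psl28_no_7-p_or_3-col}. First I would dispense with the solvable case: if $G$ is solvable, then $\graphcomp(G)$ is $3$-colorable directly by Lemma \ref{lem:solvablegroups}. So I may assume $G$ is strictly pseudo $\PSL(2,8)$-solvable and apply Proposition \ref{prop:psl28_no_7-p_or_3-col}, which splits into two cases. In the first case, $\graphcomp(G)$ is $3$-colorable and triangle-free, and we are immediately done.

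The substantive case is the second alternative of Proposition \ref{prop:psl28_no_7-p_or_3-col}, namely that $\edge{7}{p} \notin \graphcomp(G)$ for every $p \notin \{2,3,7\}$. Under this hypothesis the only possible neighbors of the vertex $7$ in $\graphcomp(G)$ are $2$ and $3$, so the vertex $7$ has degree at most $2$. Consequently, to conclude that $\graphcomp(G)$ is $3$-colorable it suffices to show that $\graphcomp(G) \setminus \{7\}$ is $3$-colorable, since a vertex of degree at most $2$ can always be assigned one of three colors distinct from those of its (at most two) neighbors when reinserted.

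To see that $\graphcomp(G) \setminus \{7\}$ is $3$-colorable, I would observe that deleting the vertex $7$ deletes in particular the edge $\edge{3}{7}$, so $\graphcomp(G) \setminus \{7\}$ is a subgraph of $\graphcomp(G) \setminus \{\edge{3}{7}\}$. By Lemma \ref{lem:psl28_no_3-7}, the latter graph is $3$-colorable, and $3$-colorability is inherited by subgraphs, so $\graphcomp(G) \setminus \{7\}$ is $3$-colorable. Reinserting the vertex $7$ with a color avoiding those of $2$ and $3$ then yields a proper $3$-coloring of $\graphcomp(G)$, completing the proof.

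I do not anticipate any real obstacle here: the difficulty of this case has been absorbed entirely into Proposition \ref{prop:psl28_no_7-p_or_3-col} (which in turn relied on Flavell's theorem) and Lemma \ref{lem:psl28_no_3-7}. The only point requiring a small amount of care is the bookkeeping that removing the vertex $7$ produces a subgraph of the already-colored $\graphcomp(G)\setminus\{\edge{3}{7}\}$, so that the color used for $7$ upon reinsertion does not conflict with its neighbors.
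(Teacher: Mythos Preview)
Your proof is correct and follows essentially the same approach as the paper's: invoke Proposition \ref{prop:psl28_no_7-p_or_3-col}, and in the second alternative use that $7$ has degree at most $2$ together with the $3$-colorability of $\graphcomp(G)\setminus\{\edge{3}{7}\}$ from Lemma \ref{lem:psl28_no_3-7}. Your separate handling of the solvable case is harmless but unnecessary, since Proposition \ref{prop:psl28_no_7-p_or_3-col} already applies to all pseudo $\PSL(2,8)$-solvable groups.
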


\begin{proof}
By Proposition \ref{prop:psl28_no_7-p_or_3-col}, $\graphcomp(G)$ is either $3$-colorable and triangle-free or it has no $\edge{7}{p}$ edge for each $p \notin \{2, 3, 7\}$. If $\graphcomp(G)$ is $3$-colorable and triangle-free, we are done. Otherwise, the only vertices $7$ can be connected to are $3$ and $2$. Thus, the vertex $7$ has degree at most $2$. So in order to show that $\graphcomp(G)$ is $3$-colorable, it suffices to show that $\graphcomp(G) \setminus \{7\}$ is $3$-colorable. However, $\graphcomp(G) \setminus \{7\}$ is a subgraph of $\graphcomp(G) \setminus \{\edge{3}{7}\}$, which is $3$-colorable by Lemma \ref{lem:psl28_no_3-7}. A subgraph of a $3$-colorable graph is $3$-colorable, so we are done.
\end{proof}

If $G$ is pseudo $\PSL(2, 8)$-solvable, we now know that $\graphcomp(G)$ must contain at most one triangle, and applying Lemmas \ref{lem:psl28_no_3p} and \ref{lem:psl28_no_2-p}, as well as Corollary \ref{cor:psl28_no_7-p}, we know that such a triangle must be isolated. Along with the fact that $\graphcomp(G)$ is always $3$-colorable, we now obtain a classification.

\begin{theorem}
Let $\Gamma$ be a graph. Then the following are equivalent:
\begin{enumerate}
    \item $\Gamma$ is the prime graph of a pseudo $\PSL(2, 8)$-solvable group $G$.
    \item $\graphcomp$ is $3$-colorable and has at most one triangle, which must be isolated if it exists.
\end{enumerate}
\end{theorem}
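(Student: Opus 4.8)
The plan is to assemble this classification almost entirely from the technical results already established for $\PSL(2,8)$, in close parallel with Theorem \ref{thm:A6-classification}. For the forward direction $(1) \Rightarrow (2)$, suppose $\graphcomp = \graphcomp(G)$ for a pseudo $\PSL(2,8)$-solvable group $G$. Corollary \ref{cor:psl28_3-col} gives that $\graphcomp(G)$ is $3$-colorable, and Corollary \ref{cor:psl28_only_2_3_7} gives that the only possible triangle is $\{2,3,7\}$. If there is no triangle we are done; otherwise the presence of a $\{2,3,7\}$ triangle forces $G$ to be nonsolvable, hence strictly pseudo $\PSL(2,8)$-solvable, by Lemma \ref{lem:solvablegroups}. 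I would then rule out every edge leaving the triangle: Lemma \ref{lem:psl28_no_2-p} kills each $\edge{2}{p}$, Lemma \ref{lem:psl28_no_3p} kills each $\edge{3}{p}$, and Corollary \ref{cor:psl28_no_7-p} kills each $\edge{7}{p}$, for every $p \notin \{2,3,7\}$. Thus the triangle is isolated, establishing $(2)$.

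For the backward direction $(2) \Rightarrow (1)$, I would realize each admissible graph by an explicit construction. If $\graphcomp$ is triangle-free, then by \cite[Theorem 2.8]{solvable_groups} there is a solvable group $G$ with $\graphcomp(G) = \graphcomp$, and such a $G$ is vacuously pseudo $\PSL(2,8)$-solvable. If $\graphcomp$ contains an isolated triangle $\{a,b,c\}$, I would delete it and invoke \cite[Theorem 2.8]{solvable_groups} to build a solvable group $K$ with $\graphcomp(K) \cong \graphcomp \setminus \{a,b,c\}$ and, crucially, with $(|K|, |\PSL(2,8)|) = 1$; this coprimality is arrangeable because the three primes of the deleted triangle are to be identified with $2,3,7$ and need not appear in $K$. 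Setting $G = K \times \PSL(2,8)$ then recovers $\graphcomp$: the primes $2,3,7$ span a triangle in the complement because $\PSL(2,8)$ has no element of order $6$, $14$, or $21$, while coprimality of the two direct factors guarantees a (commuting-product) element of order $pq$ for every cross pair $p \in \{2,3,7\}$, $q \in \pi(K)$, so no edge of $\graphcomp$ joins the triangle to the rest of the graph.

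I do not expect a genuine obstacle, since the hard analytic work --- the representation-theoretic and Flavell-based arguments behind Proposition \ref{prop:psl28_no_7-p_or_3-col} --- is already complete, and the theorem is essentially a bookkeeping assembly. The one point requiring care is the backward direction's verification that $\graphcomp(\PSL(2,8))$ really is a triangle: I would confirm this from the element-order structure of $\PSL(2,8) = \PSL(2,2^3)$, whose nontrivial element orders are exactly $2$, $3$, $7$, and $9$ (arising from the characteristic $2$ together with the cyclic tori of orders $q-1 = 7$ and $q+1 = 9$), so that orders $6$, $14$, and $21$ are all absent. The remaining subtlety is merely ensuring the coprimality of $K$ with $\PSL(2,8)$ in the construction, which is handled exactly as in the isolated-triangle case of Theorem \ref{thm:A6-classification}.
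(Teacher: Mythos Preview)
Your proposal is correct and follows essentially the same route as the paper's proof: the forward direction assembles Corollaries \ref{cor:psl28_3-col}, \ref{cor:psl28_only_2_3_7}, \ref{cor:psl28_no_7-p} with Lemmas \ref{lem:psl28_no_3p} and \ref{lem:psl28_no_2-p}, and the backward direction uses the identical $K \times \PSL(2,8)$ construction. Your added remarks---that the triangle forces strict pseudo $\PSL(2,8)$-solvability (needed for Lemma \ref{lem:psl28_no_2-p}) and the explicit check of element orders in $\PSL(2,8)$---are details the paper leaves implicit or defers to Table~\ref{table:1}.
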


\begin{proof}
Suppose (1) holds. So $\Gamma \cong \Gamma(G)$ for some pseudo $\PSL(2, 8)$-solvable group $G$. By Corollary \ref{cor:psl28_3-col}, $\graphcomp(G)$ is already $3$-colorable, so it suffices to show that $\graphcomp(G)$ has at most one triangle, which must be isolated if it exists.

If $\graphcomp(G)$ is triangle-free, we are done, so we may assume it has a triangle. By Corollary \ref{cor:psl28_only_2_3_7}, $\graphcomp(G)$ must have at most one triangle which must be $\{2, 3, 7\}$. For any $p \notin \{2, 3, 7\}$, Lemmas \ref{lem:psl28_no_3p} and \ref{lem:psl28_no_2-p} and Corollary \ref{cor:psl28_no_7-p} show that there are no $\edge{2}{p}$, $\edge{3}{p}$, or $\edge{7}{p}$ edges. Since this holds for any $p \notin \{2, 3, 7\}$, the $\{2, 3, 7\}$ triangle must be isolated, and (2) holds. 

Now, assume (2) holds to begin with. If $\graphcomp$ contains no triangle, then by \cite[Theorem 2.8]{solvable_groups}, we may construct a solvable group $G$ such that $\graphcomp(G) = \graphcomp$. By construction, $G$ is also pseudo $\PSL(2, 8)$-solvable, and so we are done in this case. 

Now suppose that $\graphcomp$ does have a triangle. Label the triangle $\{a, b, c\}$. Using the techniques in \cite[Theorem 2.8]{solvable_groups}, we can construct a solvable group $K$ such that $(|K|, |\PSL(2, 8)|) = 1$ and $\graphcomp \setminus \{a, b, c\} \cong \graphcomp(K)$. Since the triangle $\{a, b, c\}$ is isolated, and $\graphcomp(\PSL(2, 8))$ is a triangle, we have $\graphcomp = \graphcomp(K \times \PSL(2, 8))$. Thus, condition (1) holds.
\end{proof}

\section{The Projective Special Linear Group $\PSL(3,3)$}\label{section:psl33}

Next, we turn to the projective special linear group $\PSL(3,3)$. Referring to Table \ref{table:1}, we see that its Sylow $2$-subgroup is the semidihedral group $SD_{16}$ which cannot act Frobeniusly. Also the prime graph of $\PSL(3,3)$ does not contain a triangle. These facts make the classification for pseudo $\PSL(3,3)$-solvable groups significantly easier than in previous sections.

After we classify the prime graphs, we conclude the section with a remark about the distinction between classifying the prime graphs of pseudo $T$-solvable groups and classifying the prime graphs of strictly pseudo $T$-solvable groups.

\begin{lemma}\label{lem:PSL(3,3) no 2-p}
Let $G$ be strictly pseudo $\PSL(3,3)$-solvable. If the edge $\edge{2}{p}$ is in $\graphcomp(G)$ for some $p \notin \{2,3,13\}$, then $\graphcomp(G)$ is $3$-colorable and triangle-free. 
\end{lemma}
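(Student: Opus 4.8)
The plan is to mirror the argument for $\PSL(2,7)$ in Proposition \ref{prop:2-p}, exploiting the two facts that $\PSL(3,3)$ has non-Frobenius Sylow $2$-subgroup $SD_{16}$ and that it contains a solvable $\{3,13\}$-subgroup. First I would invoke Corollary \ref{cor:subgroupcor} to produce a subgroup $K = N.\PSL(3,3) \leq G$ with $N$ solvable and $\pi(K) = \pi(G)$; this is legitimate since $\pi(\Aut(\PSL(3,3))) = \pi(\PSL(3,3))$. Because $\edge{2}{p}$ lies in $\graphcomp(G)$, the prime $p$ is a vertex of $\graphcomp(G)$, hence $p \in \pi(G) = \pi(K)$, and as $p \notin \{2,3,13\} = \pi(\PSL(3,3))$ this forces $p \mid |N|$. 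The crux of the proof is the claim that $2 \mid |N|$ as well.

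To establish $2 \mid |N|$, I would argue by contradiction and suppose $2 \nmid |N|$. Let $SD_{16}$ denote a Sylow $2$-subgroup of $\PSL(3,3)$ and consider its preimage $N.SD_{16}$ in $K$, which is solvable and therefore has a Hall $\{2,p\}$-subgroup $H_{2p}$. Lemma \ref{lem:Hallsubgroups} together with Schur--Zassenhaus gives $H_{2p} = P \rtimes SD_{16}$, where $P$ is the Sylow $p$-subgroup of $N$ (nontrivial, since $p \mid |N|$); here the full $SD_{16}$ survives because $2 \nmid |N|$. If $H_{2p}$ had no element of order $2p$, then no nonidentity element of $SD_{16}$ could fix a nontrivial element of $P$ (otherwise a suitable power would yield a commuting involution and order-$p$ element), so $SD_{16}$ would act fixed-point-freely on $P$, making it a Frobenius complement --- impossible, since $SD_{16}$ is neither cyclic nor generalized quaternion. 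Hence $H_{2p}$, and therefore $G$, contains an element of order $2p$, contradicting $\edge{2}{p} \in \graphcomp(G)$. This contradiction yields $2 \mid |N|$.

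Finally, knowing $2 \mid |N|$, I would take the Frobenius subgroup $F_{39} \cong C_{13} \rtimes C_3$ of $\PSL(3,3)$ (the normalizer of a Sylow $13$-subgroup), which has $\pi(F_{39}) = \{3,13\}$, and form the solvable subgroup $K_1 = N.F_{39} \leq G$. By Lemma \ref{lem:solvablegroups}, $\graphcomp(K_1)$ is $3$-colorable and triangle-free. Since $2 \mid |N|$ we get $\pi(K_1) = \pi(N) \cup \{3,13\} = \pi(K) = \pi(G)$, so every edge of $\graphcomp(G)$ is an edge of $\graphcomp(K_1)$; that is, $\graphcomp(G)$ is a subgraph of $\graphcomp(K_1)$ and thus inherits $3$-colorability and triangle-freeness. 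I expect the only nonroutine point to be pinning down the relevant subgroups of $\PSL(3,3)$ --- the non-Frobenius Sylow $2$-subgroup $SD_{16}$ and the $\{3,13\}$-Frobenius subgroup $F_{39}$ --- both of which are recorded in Table \ref{table:1} and standard for $\PSL(3,3)$; everything else is a direct transcription of the $\PSL(2,7)$ argument.
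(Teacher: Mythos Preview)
Your proposal is correct and follows essentially the same approach as the paper's proof: invoke Corollary \ref{cor:subgroupcor} to get $K = N.\PSL(3,3)$, show $2 \mid |N|$ by arguing that otherwise a Hall $\{2,p\}$-subgroup $P.SD_{16}$ would force $SD_{16}$ to be a Frobenius complement, and then pass to the solvable subgroup $N.(C_{13}\rtimes C_3)$ with the full prime set. Your write-up is in fact slightly more explicit in justifying why $P$ is nontrivial and why the absence of an order-$2p$ element forces a fixed-point-free action, but the argument is identical in substance.
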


\begin{proof}
By Corollary \ref{cor:subgroupcor}, $G$ has a subgroup $K \cong N.\PSL(3,3)$, where $N$ is solvable and $\pi(G) = \pi(K)$. Suppose $2 \nmid |N|$. Then by Lemma \ref{lem:Hallsubgroups}, $P.SD_{16}$ is a Hall $\{2,p\}$-subgroup of $N.SD_{16} \leq K$, where $P$ is a Sylow $p$-subgroup of $N$, and $SD_{16}$ denotes the semidihedral group of order $16$ (the Sylow $2$-subgroup of $\PSL(3,3)$). Since $SD_{16}$ cannot act Frobeniusly, there is an order $2p$ element in $P.SD_{16} \leq K \leq G$, contradicting $\edge{2}{p} \in \graphcomp(G)$. 
Thus $2$ must divide $|N|$. Therefore, $K_1 \cong N.(C_{13} \rtimes C_3)$ is a solvable subgroup of $K$ with the same set of prime divisors. This implies that $\graphcomp(K_1)$, and therefore $\graphcomp(G)$, is triangle-free and $3$-colorable.
\end{proof}

\begin{theorem}\label{thm:psl33-classification}
A simple graph $\Gamma$ is isomorphic to the prime graph of a pseudo $\PSL(3,3)$-solvable group if and only if $\graphcomp$ is $3$-colorable and triangle-free.
\end{theorem}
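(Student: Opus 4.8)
The plan is to establish both implications, with sufficiency essentially free and necessity carrying the content. For sufficiency, suppose $\graphcomp$ is $3$-colorable and triangle-free; then Lemma \ref{lem:solvablegroups} yields a solvable group $G$ with $\graphcomp(G) \cong \graphcomp$, and since the composition factors of a solvable group are all cyclic, $G$ is pseudo $\PSL(3,3)$-solvable. Hence $\Gamma \cong \Gamma(G)$ realizes $\Gamma$ as the prime graph of a pseudo $\PSL(3,3)$-solvable group.

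For necessity, let $G$ be pseudo $\PSL(3,3)$-solvable. If $G$ is solvable we are done by Lemma \ref{lem:solvablegroups}, so I would assume $G$ is strictly pseudo $\PSL(3,3)$-solvable. The first step is to apply Lemma \ref{lem:remove_p-r_then_solvable} with $(p,q,r) = (2,3,13)$: since $\PSL(3,3)$ contains a $\{2,3\}$-subgroup (for instance a point stabilizer of order $2^4 \cdot 3^3$ in the action on $PG(2,3)$) and the Frobenius group $C_{13} \rtimes C_3$ of order $39$ as a $\{3,13\}$-subgroup, the lemma gives that $\graphcomp(G) \setminus \{\edge{2}{13}\}$ is $3$-colorable and triangle-free. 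This reduces everything to understanding the single edge $\edge{2}{13}$ and, in effect, the neighborhood of the vertex $2$.

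The crux is to bound the degree of the vertex $2$. If $\edge{2}{p} \in \graphcomp(G)$ for some prime $p \notin \{2,3,13\}$, then Lemma \ref{lem:PSL(3,3) no 2-p} already delivers the conclusion outright. In the complementary case, $2$ can only be adjacent to $3$ or $13$; here I would use Corollary \ref{cor:subgroupcor} to extract a subgroup $K \cong N.\PSL(3,3) \le G$ with $N$ solvable. Because $K$ surjects onto $\PSL(3,3)$ and $\PSL(3,3)$ has an element of order $6$ (Table \ref{table:1}), any preimage of such an element has order divisible by $6$, so $G$ contains an element of order $6$, forcing $\edge{2}{3} \notin \graphcomp(G)$. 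Thus in this case the vertex $2$ has degree at most $1$, with $13$ its only possible neighbor.

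It then remains to reinstate the edge $\edge{2}{13}$. In $\graphcomp(G) \setminus \{\edge{2}{13}\}$ the vertex $2$ is isolated, so no triangle of $\graphcomp(G)$ can pass through $\edge{2}{13}$ (such a triangle would need a common neighbor of $2$ and $13$), and triangle-freeness is preserved; for colorability I would take a proper $3$-coloring of $\graphcomp(G) \setminus \{\edge{2}{13}\}$ and recolor the isolated vertex $2$ to avoid the color of $13$. I expect this last step --- putting the $\edge{2}{13}$ edge back --- to be the only genuine obstacle, since adjoining an edge does not preserve $3$-colorability in general; the argument works precisely because the preceding steps force $\deg(2) \le 1$, which is what Lemma \ref{lem:PSL(3,3) no 2-p} together with the order-$6$ element guarantees.
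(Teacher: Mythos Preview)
Your proof is correct and follows essentially the same approach as the paper: reduce via a solvable subgroup of $K=N.\PSL(3,3)$ to make the graph minus something involving the vertex $2$ triangle-free and $3$-colorable, then control the neighborhood of $2$ using Lemma~\ref{lem:PSL(3,3) no 2-p} and the existence of an order-$6$ element in $\PSL(3,3)$. The only cosmetic difference is that you invoke Lemma~\ref{lem:remove_p-r_then_solvable} to delete the edge $\edge{2}{13}$, whereas the paper deletes the vertex $2$ directly via the subgroup $N.(C_{13}\rtimes C_3)$; the endgame (degree of $2$ at most $1$, so reinsertion is harmless) is identical.
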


\begin{proof}
If $\graphcomp$ is $3$-colorable and triangle-free, then by \cite[Theorem 2.8]{solvable_groups} a suitable solvable group can be constructed which is trivially pseudo $\PSL(3,3)$-solvable. Likewise, if $G$ is solvable, $\graphcomp(G)$ will be $3$-colorable and triangle-free.

If $G$ is a strictly pseudo $\PSL(3,3)$-solvable group, then by Corollary \ref{cor:subgroupcor}, $G$ has a subgroup $K \cong N.\PSL(3,3)$ with $\pi(K)=\pi(G)$ and $N$ solvable. Note that $K$ has a solvable subgroup $K_1 \cong N.(C_{13} \rtimes C_3)$, and $\pi(K_1) \cup \{2\}=\pi(G)$. Thus $\graphcomp(G) \setminus \{2\}$ is obtained by strictly removing edges from $\graphcomp(K_1)$ and so it is $3$-colorable and triangle-free. Now take $\graphcomp(G)$ in its entirety. If the vertex $2$ is connected to a prime $p \neq 3,13$, then $\graphcomp(G)$ is 3-colorable and triangle-free by Lemma \ref{lem:PSL(3,3) no 2-p}. Also there cannot be an edge between $2$ and $3$ because $\PSL(3,3)$ and thus $G$ contains an element of order $6$. So we may assume $2$ is connected to no primes other than $13$ in $\graphcomp(G)$. Then $2$ has degree at most one, so it cannot be in a triangle, and it can not increase the chromatic number. Therefore the entirety of $\graphcomp(G)$ is $3$-colorable and triangle-free.
\end{proof}

\begin{remark}\label{rmk:Tim's-remark}
In this paper, we classify the prime graphs of pseudo $T$-solvable groups, where $T$ is a $K_3$-group. It is important to note that we do not classify the prime graphs of \textit{strictly} pseudo $T$-solvable groups. This is a necessary distinction, and with the following lemma, we will be able to give an example of an infinite family of $3$-colorable triangle-free graphs which are not isomorphic to the complement of the prime graph of any strictly pseudo $\PSL(3,3)$-solvable group. This more specific classification of strictly pseudo $T$-solvable groups remains an open problem for each $K_3$-group $T$.
\end{remark}

\begin{lemma}\label{lem:Tim'sLittleLemma}
Let $G$ be a strictly $\PSL(3,3)$-solvable group. There are no edges $\edge{3}{p}$ with $p \notin \{2,3,13\}$ in $\graphcomp(G)$.
\end{lemma}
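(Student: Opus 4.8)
The plan is to reduce, exactly as in the proofs of Proposition \ref{prop:A6_5-p} and Lemma \ref{lem:psl28_no_3p}, to an application of the representation-theoretic Lemma \ref{lem:rep_theory_biz}. By Corollary \ref{cor:subgroupcor}, $G$ contains a subgroup $K \cong N.\PSL(3,3)$ with $N$ solvable and $\pi(K) = \pi(G)$; since $p \notin \pi(\PSL(3,3)) = \{2,3,13\}$, the prime $p$ must divide $|N|$. As $\graphcomp(G)$ is obtained from $\graphcomp(K)$ by deleting edges, it suffices to produce an element of order $3p$ in $K$, and for this it is enough to find an element $t \in \PSL(3,3)$ of order $3$ which centralizes some nontrivial $p$-element of $N$.

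The first step is to exhibit the order-$3$ element demanded by Lemma \ref{lem:rep_theory_biz}. A direct computation with the character table of $\PSL(3,3)$ in GAP \cite{GAP}, carried out exactly as for $A_6$ in Table \ref{table:2}, shows that there is an element $t \in \PSL(3,3)$ of order $3$ such that $\rho(t)$ has a nonzero fixed vector for every irreducible complex representation $\rho$ of $\PSL(3,3)$. Granting a genuinely coprime extension $M.\PSL(3,3)$ with $M$ a nontrivial $p$-group, Lemma \ref{lem:rep_theory_biz} then immediately yields an element of order $3p$.

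The second step is to manufacture such a coprime $p$-section inside $K$. Replacing $K$ by $K/O_{p'}(N)$ (an element of order $3p$ in the quotient lifts to one in $K$, since if a preimage has order $3pk$ then its $k$-th power has order exactly $3p$), we may assume $O_{p'}(N) = 1$, so that $\Fit(N) = O_p(N)$ is a nontrivial self-centralizing $p$-group. Choosing a chief factor $V$ of $K$ lying inside this $p$-group, the quotient $\PSL(3,3) \cong K/N$ together with $N$ acts on the elementary abelian $p$-group $V$; passing to the topmost $p$-chief factor and applying Schur--Zassenhaus realizes the relevant section as $V \rtimes Y$, where $Y$ is a $p'$-group mapping onto $\PSL(3,3)$ and acting coprimely on $V$.

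The main obstacle is precisely that $N$ need not be coprime to $\PSL(3,3)$: the two can share the primes $2$, $3$, and $13$, so unlike in Proposition \ref{prop:A6_5-p} (where the triangle hypothesis forced $(|N|,15)=1$) these primes cannot be stripped away at the outset, and the solvable ``in-between'' part of $Y$ blocks a direct application of Lemma \ref{lem:rep_theory_biz}. I expect to resolve this by inducting on the order of that solvable part: for a minimal normal $q$-subgroup $L$ ($q \in \{2,3,13\}$) of $Y$, coprime action gives $V = C_V(L) \oplus [V,L]$, and if $C_V(L) \neq 0$ one recurses on the smaller module $C_V(L)$ for the quotient $Y/L$, until $Y$ has been reduced to $\PSL(3,3)$ acting coprimely on a nonzero $p$-module, where the first step applies. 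The residual case, in which every such $L$ (and hence conceivably all of $Y$) acts fixed-point-freely on $V$, is excluded because it would exhibit $\PSL(3,3)$ as a section of a Frobenius complement, contradicting the well-known fact that $A_5$ is the only nonabelian simple group with this property. This f.p.f.\ corner case, and the bookkeeping needed to keep the order-$3$ element genuinely coming from $\PSL(3,3)$ throughout the reduction, are where I anticipate the real work lies.
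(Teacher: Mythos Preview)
Your plan is substantially more elaborate than what the paper does, and the ``residual case'' has a real gap. The paper's argument is modeled not on Proposition~\ref{prop:A6_5-p} or Lemma~\ref{lem:psl28_no_3p} but on Lemma~\ref{lem:A6_3-p}, and it needs no representation theory at all. Inside $K = N.\PSL(3,3)$ one passes to the solvable subgroup $N.(C_3^2 \rtimes C_3)$, where $C_3^2 \rtimes C_3$ is the Sylow $3$-subgroup of $\PSL(3,3)$, and extracts a Hall $\{3,p\}$-subgroup $H \cong A.(C_3^2 \rtimes C_3)$. If there were no element of order $3p$ then $H$ would be Frobenius or $2$-Frobenius; since $H$ has a nontrivial $3$-group quotient, the type must be $(3,p)$ or $(3,p,3)$, and in either case a $3$-group Frobenius complement surjects onto the non-cyclic group $C_3^2 \rtimes C_3$, contradicting the fact that $3$-group Frobenius complements are cyclic. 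That is the entire proof---no GAP, no Lemma~\ref{lem:rep_theory_biz}, no reduction to coprime action.

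As for your route: in the ``residual case'' you conflate two different conditions. The Fitting decomposition $V = C_V(L) \oplus [V,L]$ yields $C_V(L) = 0$, which only says that no nonzero vector is fixed by \emph{all} of $L$; it does not say that $L$ acts fixed-point-freely. And even if some (or every) minimal normal $L \unlhd Y$ did act fixed-point-freely on $V$, that would make $L$, not $Y$, a Frobenius complement---so $\PSL(3,3)$, which is a section of $Y$ and not of $L$, is not exhibited inside any Frobenius complement by this reasoning. (Already a dihedral group $D_{2q}$ acting on $\F_p^2$ by $\operatorname{diag}(a,a^{-1})$ together with the coordinate swap shows a normal subgroup acting fixed-point-freely while the ambient group has fixed points.) Your induction therefore has no fallback when $C_V(L) = 0$, and the promised reduction to a coprime $\PSL(3,3)$-action on a $p$-module does not go through as stated.
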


\begin{proof}
By Corollary \ref{cor:subgroupcor}, $G$ has a subgroup $K \cong N.\PSL(3,3)$ where $N$ is solvable and $\pi(G) = \pi(K)$.
Then $K$ contains a subgroup $N.(C_3^2 \rtimes C_3)$. This subgroup is solvable so by Lemma \ref{lem:Hallsubgroups} it contains a Hall $\{3,p\}$-subgroup $H \cong A.(C_3^2 \rtimes C_3)$, where $A$ is a Hall $\{3,p\}$-subgroup of $N$. If $H$ is Frobenius or $2$-Frobenius, then this implies the existence of a Frobenius complement which is a $3$-group with $C_3^2 \rtimes C_3$ as a quotient. Since all Frobenius complements that are $3$-groups must be cyclic, this is a contradiction. Therefore, $H$ is not Frobenius or $2$-Frobenius, and so it contains an element of order $3p$. Thus, $\edge{3}{p} \notin \graphcomp(H)$, so $\edge{3}{p} \notin \graphcomp(G)$.
\end{proof}

Lemma \ref{lem:Tim'sLittleLemma} implies in particular that the prime graph of any strictly pseudo $\PSL(3, 3)$-solvable group $G$ must have a vertex with degree at most $2$. If, then, one were to construct a $3$-colorable and triangle-free graph where every vertex has degree greater than $2$, such a graph could not be the prime graph complement of a strictly pseudo $\PSL(3, 3)$-solvable group. We provide such a construction in the following example and show that it has the desired properties in the following proposition.

\begin{figure}
    \centering
    \begin{tikzpicture}
\node (c1) at (0, 0) {$\bullet$};
\node (c2) at (1.5, 0) {$\bullet$};
\node (c3) at (3, 0) {$\bullet$};
\node (c4) at (4.5, 0) {$\bullet$};

\node (d1) at (0, 1.5) {$\bullet$};
\node (d2) at (1.5, 1.5) {$\bullet$};
\node (d3) at (3, 1.5) {$\bullet$};
\node (d4) at (4.5, 1.5) {$\bullet$};

\node (e1) at (0.75, 4) {$\bullet$};
\node (e2) at (3.75, 4) {$\bullet$};

\draw (c1) -- (c4);
\draw (0, 0) arc(-180:0:2.25);

\draw (c1) -- (d1);
\draw (c2) -- (d2);
\draw (c3) -- (d3);
\draw (c4) -- (d4);

\draw (d1) -- (e1);
\draw (d2) -- (e1);
\draw (d3) -- (e1);
\draw (d4) -- (e1);

\draw (d1) -- (e2);
\draw (d2) -- (e2);
\draw (d3) -- (e2);
\draw (d4) -- (e2);

\draw[fill=red!20!white] (c1) circle (0.3);
\draw[fill=blue!20!white] (c2) circle (0.3);
\draw[fill=red!20!white] (c3) circle (0.3);
\draw[fill=blue!20!white] (c4) circle (0.3);

\draw[fill=blue!20!white] (d1) circle (0.3);
\draw[fill=red!20!white] (d2) circle (0.3);
\draw[fill=blue!20!white] (d3) circle (0.3);
\draw[fill=red!20!white] (d4) circle (0.3);

\draw[fill=green!20!white] (e1) circle (0.3);
\draw[fill=green!20!white] (e2) circle (0.3);
\end{tikzpicture}
    \caption{This graph is $3$-colorable and triangle free. However, every vertex has degree $3$ or greater, and so this graph cannot be the prime graph complement of a strictly pseudo $\PSL(3, 3)$-solvable group.}
    \label{fig:Tim's Graph}
\end{figure}
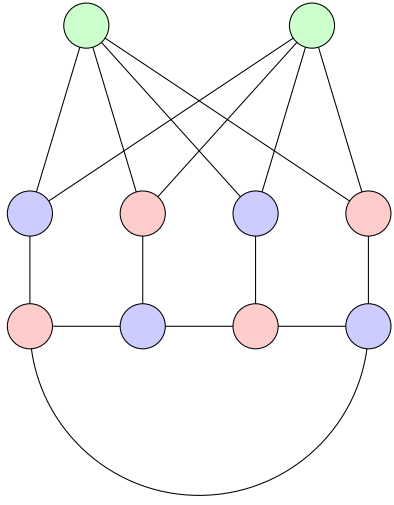

\begin{example}\label{ex:TimsExample}
Construct a graph $\Gamma$ by starting with a cycle of length $n \geq 4$. Call this cycle $C_n$. Add whiskers to the cycle. (That is, add  $n$ isolated vertices. Call this collection of vertices $V$. Connect each vertex in $V$ to a single vertex in $C_n$ such that no two vertices in $V$ connect to the same vertex in $C_n$.) Add $2$ more vertices. Call this set of vertices $K$. Connect both vertices in $K$ to all of the whiskers (i.e. draw an edge between each vertex in $V$ and each vertex in $K$). Figure \ref{fig:Tim's Graph} shows the graph resulting from this construction for $n=4$.
\end{example}

\begin{proposition}
For any value of $n$, the graph $\Gamma$, constructed in Example \ref{ex:TimsExample}, is $3$-colorable and triangle-free, but it is not isomorphic to the complement of the prime graph of any strictly pseudo $\PSL(3,3)$-solvable group $G$.
\end{proposition}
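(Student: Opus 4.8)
The plan is to establish the three claimed properties of $\Gamma$ in turn, treating the two graph-theoretic assertions by direct inspection of the construction and deriving the non-realizability from Lemma \ref{lem:Tim'sLittleLemma}. First I would fix notation: write $c_1, \dots, c_n$ for the cycle vertices, $v_1, \dots, v_n$ for the whiskers with each $v_i$ adjacent to $c_i$, and $k_1, k_2$ for the two vertices of $K$, each adjacent to every whisker. Then the full edge set consists of the cycle edges $\edge{c_i}{c_{i+1}}$ (indices mod $n$), the whisker edges $\edge{c_i}{v_i}$, and the edges $\edge{v_i}{k_1}$ and $\edge{v_i}{k_2}$. In particular the whiskers form an independent set, $k_1$ and $k_2$ are nonadjacent, and no $K$-vertex meets any cycle vertex.

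For triangle-freeness I would enumerate the possible vertex-type combinations in a putative triangle and rule each one out: three cycle vertices would require a $3$-cycle inside $C_n$, impossible for $n \geq 4$; a whisker $v_i$ is adjacent to exactly one cycle vertex, so it cannot sit in a triangle with two cycle vertices; and since the whiskers are independent and $k_1, k_2$ are nonadjacent, no triangle can contain two whiskers or both $K$-vertices. For $3$-colorability I would exhibit an explicit coloring with colors $\{1,2,3\}$: give both $k_1$ and $k_2$ the color $3$, choose any proper $3$-coloring of the cycle $C_n$ (every cycle is $3$-colorable), and then color each whisker $v_i$ by a color of $\{1,2\}$ different from that of $c_i$. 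This last step never fails, since $v_i$ need only avoid the color of $c_i$ and the color $3$ of its $K$-neighbors, always leaving an admissible color in $\{1,2\}$.

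For the non-realizability I would compute the degree of every vertex: each $c_i$ has degree $3$ (its two cycle-neighbors together with $v_i$), each $v_i$ has degree $3$ (namely $c_i, k_1, k_2$), and each $k_j$ has degree $n \geq 4$. Hence $\Gamma$ has minimum degree at least $3$. By Lemma \ref{lem:Tim'sLittleLemma}, for any strictly pseudo $\PSL(3,3)$-solvable group $G$ the vertex $3$ of $\graphcomp(G)$ is adjacent only to primes in $\{2,13\}$ and so has degree at most $2$. Since a graph isomorphism preserves the degree sequence, and $\Gamma$ has no vertex of degree at most $2$, it cannot be isomorphic to $\graphcomp(G)$ for any such $G$.

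I do not anticipate a genuine obstacle here: all three parts are routine once the adjacency structure is written out, and the entire conceptual weight rests on Lemma \ref{lem:Tim'sLittleLemma}. The only points deserving care are the whisker-coloring step, where one must check that forbidding two of the three colors always leaves a free choice, and the bookkeeping in the triangle enumeration, so that no vertex-type combination is overlooked.
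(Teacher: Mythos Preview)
Your proof is correct and follows essentially the same approach as the paper: both arguments exhibit an explicit $3$-coloring (you use colors $\{1,2,3\}$ with the $K$-vertices receiving color $3$, the paper uses red/green/blue with the $K$-vertices receiving green), dispose of triangles by case analysis on the vertex types involved, and then invoke Lemma~\ref{lem:Tim'sLittleLemma} together with the observation that every vertex of $\Gamma$ has degree at least $3$ to rule out realizability. The only cosmetic difference is in how the triangle enumeration is organized, but all the relevant adjacency facts (whiskers independent, $k_1,k_2$ nonadjacent, no $K$--cycle edges) appear in both arguments.
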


\begin{proof}
First we give a valid $3$-coloring for $\Gamma$. Start by coloring $C_n$ with red, green, and blue. All cycles can be $3$-colored. Next color the whiskers, $V$ with blue or red. This is possible because none of the whiskers are connected to each other, and each whisker connects to only one vertex on the cycle. Finally, color the two vertices in $K$ green. This is possible because they are only connected to whiskers, and none of the whiskers are green. This defines a valid $3$-coloring for any value of $n$. This coloring is demonstrated for $n=4$ in Figure \ref{fig:Tim's Graph}.

Note that for any $n \geq 4$, $C_n$ contains no triangles. Likewise the vertices in $V$ are only connected to a single vertex in $C_n$, so the induced subgraph of $\Gamma$ onto $C_n \cup V$ is still triangle free. The vertices in $K$ are disjoint, and they only connect to vertices in $V$. The vertices in $V$ are also disjoint, so the vertices in $K$ cannot be part of a triangle. Thus the entire graph is triangle-free for all $n \geq 4$.

For all values of $n$, each vertex in $C_n$ is also connected to a whisker so it has degree 3. Each vertex in $V$ has degree 3 because it is connected to one vertex in $C_n$ and the two vertices in $K$. Both vertices in $K$ are connected to the $n$ vertices in $V$. Thus for any $n \geq 4$, The vertices in $K$ have degree greater than $3$. But by Lemma \ref{lem:Tim'sLittleLemma}, in the complement of the prime graph of a pseudo $\PSL(3,3)$-solvable group $G$, the vertex $3$ can have degree at most $2$. Therefore $\Gamma$ is not the complement of the prime graph of any pseudo $\PSL(3,3)$-solvable group.
\end{proof}

\section{The Projective Special Linear Group $\PSL(2,17)$}\label{section:psl217}

The classification of prime graphs of pseudo $\PSL(2, 17)$-solvable groups is considerably more difficult than any of the preceding groups. Although we achieve the same classification as $\PSL(2, 7)$, we use different techniques and higher technology in order to arrive at this result. We begin with a familiar lemma:

\begin{lemma}\label{lem:psl217_no_3-17}
Let $G$ be a pseudo $\PSL(2,17)$-solvable group. Then $\graphcomp(G) \setminus \{\edge{3}{17}\}$ is $3$-colorable and triangle-free.
\end{lemma}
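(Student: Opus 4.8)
The plan is to invoke the general machinery of Lemma \ref{lem:remove_p-r_then_solvable} directly, exactly as was done for the analogous statements about $\PSL(2,7)$ (Theorem \ref{thm:psuedopsl27primegraphs}), $A_6$ (Lemma \ref{lem:A6_remove_3-5}), and $\PSL(2,8)$ (Lemma \ref{lem:psl28_no_3-7}). First I would record that $|\PSL(2,17)| = \tfrac{17(17^2-1)}{2} = 2^4 \cdot 3^2 \cdot 17$, so that $\pi(\PSL(2,17)) = \{2,3,17\}$, and that $\PSL(2,17)$ is a $K_3$-group as required by the hypotheses of Lemma \ref{lem:remove_p-r_then_solvable}. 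Since the edge to be deleted is $\edge{3}{17}$, the two endpoints $3$ and $17$ should play the roles of the ``outer'' primes $p$ and $r$, and $2$ should be the ``middle'' prime $q$; concretely I would set $p = 3$, $q = 2$, $r = 17$. It then remains only to exhibit inside $\PSL(2,17)$ a subgroup $S_{pq}$ with $\pi(S_{pq}) = \{2,3\}$ and a subgroup $S_{qr}$ with $\pi(S_{qr}) = \{2,17\}$, after which the conclusion is immediate from Lemma \ref{lem:remove_p-r_then_solvable}.

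For the first subgroup I would take the dihedral group $D_{18}$ of order $q+1 = 18 = 2 \cdot 3^2$ (the normalizer of a torus of order $9$), which plainly satisfies $\pi(D_{18}) = \{2,3\}$; alternatively $A_4 \le \PSL(2,17)$ serves equally well. The only point that deserves a sentence of justification is the existence of the $\{2,17\}$-subgroup $S_{qr}$. Here I would use $D_{34} \cong C_{17} \rtimes C_2$. Its existence follows from the structure of the Borel subgroup: the normalizer of a Sylow $17$-subgroup $C_{17}$ is the Frobenius group $C_{17} \rtimes C_8$, in which $C_8$ acts faithfully on $C_{17}$, hence as an order-$8$ subgroup of $\Aut(C_{17}) \cong C_{16}$. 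That subgroup contains the unique involution of $\Aut(C_{17})$, namely inversion, so the involution of $C_8$ inverts $C_{17}$ and generates, together with $C_{17}$, a copy of $D_{34}$ with $\pi(D_{34}) = \{2,17\}$.

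With $S_{pq} = D_{18}$ and $S_{qr} = D_{34}$ both sitting inside $\PSL(2,17)$, all hypotheses of Lemma \ref{lem:remove_p-r_then_solvable} are met with $(p,q,r) = (3,2,17)$, and it yields that $\graphcomp(G) \setminus \{\edge{3}{17}\}$ is $3$-colorable and triangle-free. I do not anticipate any genuine obstacle: the substantive work has already been carried out once and for all in Lemma \ref{lem:remove_p-r_then_solvable}, and the proof here reduces to the bookkeeping of naming the two subgroups. The mildest point of care is simply confirming that the required $\{2,17\}$-subgroup is realized inside $\PSL(2,17)$, which as noted above follows cleanly from the Frobenius structure of the point stabilizer (and could equally be cited from Dickson's classification of the subgroups of $\PSL(2,q)$).
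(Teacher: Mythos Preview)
Your proposal is correct and matches the paper's proof essentially verbatim: the paper also sets $(p,q,r)=(3,2,17)$ and applies Lemma~\ref{lem:remove_p-r_then_solvable} with the subgroups $D_{18}, D_{34} \leq \PSL(2,17)$. Your added justification for the existence of $D_{34}$ inside $\PSL(2,17)$ is sound but more detail than the paper supplies.
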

\begin{proof}
Let $p = 3, q = 2, r = 17$. Apply Lemma \ref{lem:remove_p-r_then_solvable} with the subgroups $D_{18}, D_{34} \leq \PSL(2, 17)$.
\end{proof}

From this proof we can define the Frobenius digraph of a pseudo $\PSL(2,17)$-solvable group. 

\begin{definition}
If $G$ is a pseudo $\PSL(2,17)$-solvable group, we say its \textit{Frobenius digraph} $\digraph(G)$ is the orientation of $\graphcomp(G)$ given by the orientation of $\digraph(K)$ as defined in the proof of Lemma \ref{lem:psl217_no_3-17} (which is provided in the proof of Lemma \ref{lem:remove_p-r_then_solvable}). In addition, if there is an edge $\edge{3}{17}$ in $\graphcomp(G)$, we direct it by $17 \rightarrow 3$. 
\end{definition}

By the proof of Lemma \ref{lem:remove_p-r_then_solvable}, if the edges $\edge{2}{3}$ and/or $\edge{2}{17}$ are in $\graphcomp(G)$, the orientations must be $3 \leftarrow 2 \rightarrow 17$. The choice of $17 \rightarrow 3$ over $3 \rightarrow 17$ is somewhat arbitrary, but it helps for consistency in the proof of Theorem \ref{thm:PSL217_classification}.

If $G$ is a pseudo $\PSL(2, 17)$-solvable group, then Lemma \ref{lem:psl217_no_3-17} shows that $\{2, 3, 17\}$ and $\{3, 17, p\}$ (for some $p \notin \{2, 3, 17\}$) are the only possible triangles that can exist in $\graphcomp(G)$. With $\PSL(2, 7)$ we were able to rule out $\{2, 7, p\}$ triangles and with $A_6$, we were able to rule out $\{3, 5, p\}$ triangles. Likewise, for $\PSL(2, 17)$, we will be able to rule out $\{3, 17, p\}$ triangles, but it will take considerably more work than in the previous cases. 


Our strategy will be to reduce to the cases of either $G = N.\PSL(2,17)$ or $G = N.\SL(2,17)$, where $2, 3, 17 \nmid |N|$. This would allow us to apply techniques which require a coprime action.
We spend the next few lemmas reducing to these cases. First, we prove an elementary lemma about generalized quaternion groups, which we apply to $Q_{32}$, the Sylow $2$-subgroup of $\SL(2, 17)$. We then generalize this lemma to all of $\SL(2, 17)$. 

\begin{lemma}\label{lem:Q32}
Suppose we have a group $G$ with two distinct normal subgroups $N_1, N_2 \unlhd G$ such that $N_1 \cong N_2 \cong C_2$, $N_1N_2 \cong V_4$, and $G / N_1 \cong G / N_2 \cong Q_{2^k}$ where $k \geq 3$. Then $G \cong N_1 \times Q_{2^k}$.
\end{lemma}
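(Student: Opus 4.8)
The plan is to exhibit $G$ as $N_1 \times H$ for a suitable $H \cong Q_{2^k}$, so that the whole problem reduces to finding a complement to $N_1$. I would first record two structural facts. A normal subgroup of order $2$ is central, since conjugation induces an automorphism of $C_2$, which is trivial; hence $N_1, N_2 \leq Z(G)$ and $V := N_1 N_2 \cong V_4$ is central. Moreover, since $N_1 \leq Z(G)$ we have $Z(G)/N_1 \hookrightarrow Z(G/N_1) = Z(Q_{2^k})$, which has order $2$ because $k \geq 3$; thus $|Z(G)| \leq 4$ and in fact $Z(G) = V$. Writing $q_i \colon G \to G/N_i$ for the two quotient maps, the key reduction is that, $N_1$ being central, any subgroup $H$ with $H \cap N_1 = 1$ and $HN_1 = G$ forces $G = N_1 \times H$ with $H \cong G/N_1 \cong Q_{2^k}$; so it suffices to produce one such complement.

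Next I would introduce coordinates adapted to $G/N_1$. Fix standard generators $\bar a, \bar b$ of $G/N_1 \cong Q_{2^k}$, with $|\bar a| = 2^{k-1}$, $\bar b^2 = \bar a^{2^{k-2}}$, and $\bar b \bar a \bar b^{-1} = \bar a^{-1}$, and note that the unique involution $\bar a^{2^{k-2}}$ of $G/N_1$ equals $q_1(n_2)$, where $N_2 = \langle n_2 \rangle$. Lifting to $a, b \in G$ and writing $N_1 = \langle n_1 \rangle$, the extension data necessarily take the shape
\[
a^{2^{k-2}} = n_2 n_1^{\delta}, \qquad b^2 = n_2 n_1^{\delta'}, \qquad b a b^{-1} = a^{-1} n_1^{\varepsilon}
\]
for some $\delta, \delta', \varepsilon \in \{0,1\}$, with $G = \langle a, b, n_1 \rangle$ and $a$ of order $2^{k-1}$. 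The candidate complement is $H = \langle a, b \rangle$; the only obstruction is that $n_1$ might lie in $H$, in which case $H = G$. So the task becomes showing the parameters can be driven to their quaternion values $\delta = \delta' = 1$, $\varepsilon = 0$: then $a^{2^{k-2}} = b^2 = n_1 n_2$ and $b a b^{-1} = a^{-1}$, so $H \cong Q_{2^k}$ with unique involution $n_1 n_2 \neq n_1$, whence $H \cap N_1 = 1$ and we are done.

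The crux, and the step I expect to be hardest, is to use the second hypothesis $G/N_2 \cong Q_{2^k}$ to pin down these parameters, for which I would exploit two rigidity features of generalized quaternion groups. First, $Q_{2^k}$ has a unique involution, so all its abelian subgroups are cyclic: applying this to the image $q_2(M)$ of the abelian index-$2$ subgroup $M := q_1^{-1}(\langle \bar a \rangle) = \langle a \rangle \times \langle n_1 \rangle \cong C_{2^{k-1}} \times C_2$ (which contains $N_2$) shows $M/N_2$ is cyclic of order $2^{k-1}$, and checking which order-$2$ subgroups of $C_{2^{k-1}} \times C_2$ have cyclic quotient forces $N_2 = \langle a^{2^{k-2}} n_1 \rangle$, i.e. $\delta = 1$. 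Second, once $\delta = 1$ the image $q_2(a)$ generates the maximal cyclic subgroup of $G/N_2 \cong Q_{2^k}$ while $q_2(b)$ lies outside it; since in $Q_{2^k}$ every element outside the maximal cyclic subgroup squares to the central involution and inverts the cyclic part, this yields $\delta' = 1$ and $\varepsilon = 0$. The difficulty is concentrated entirely here: everything reduces to correctly invoking the uniqueness and centrality of the involution of $Q_{2^k}$ — which kills $\delta = 0$ and $\delta' = 0$ — together with its inversion action, which separates $Q_{2^k}$ from the semidihedral group and kills $\varepsilon = 1$. With the parameters fixed, $H = \langle a, b \rangle$ is the desired quaternion complement, giving $G \cong C_2 \times Q_{2^k}$.
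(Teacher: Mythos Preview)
Your proof is correct and follows essentially the same approach as the paper: both arguments find generators $a,b$ and use the quaternion structure of the two quotients $G/N_1$ and $G/N_2$ to force $a^{2^{k-2}}=b^2=n_1n_2$ and $bab^{-1}=a^{-1}$, then conclude that $\langle a,b\rangle\cong Q_{2^k}$ is a complement to $N_1$. The only difference is packaging---you work asymmetrically from $G/N_1$ with extension parameters $(\delta,\delta',\varepsilon)$, while the paper works symmetrically from $G/(N_1N_2)\cong D_{2^{k-1}}$---but the substance is the same.
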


\begin{proof}
Let $n_1,n_2$ be the generators of $N_1,N_2$ respectively. Let $N=N_1N_2 \cong V_4$. Then $G/N \cong (G/N_1)/(N/N_1) \cong Q_{2^k}/C_2 \cong D_{2^{k-1}}$. Select a subgroup $M$ where $N \unlhd M \leq G$ and $M/N$ is isomorphic to a cyclic subgroup of order $2^{k-2}$ in $G/N$. (This subgroup is unique for $k \geq 4$.) Choose an element $a \in G$ such that $M/N= \langle aN \rangle$.
Note that $M/N_1 \leq G/N_1 \cong Q_{2^k}$. Thus $M/N_1$ is cyclic or generalized quaternion. Suppose that $M/N_1$ is generalized quaternion. Then $M/N \cong (M/N_1)/(N/N_1)$ and $N/N_1 \cong C_2$. The quotient of any quaternion group by the cyclic subgroup of order $2$ is dihedral, but $M/N$ is cyclic and thus not dihedral. Thus we reach a contradiction, so $M/N_1$ must be the cyclic subgroup of order $2^{k-1}$.

Therefore, there exists an element $m \in N$ such that $M/N_1= \langle (am)N_1 \rangle$. The element order of $(am)N_1$ in $G/N_1$ is $2^{k-1}$. But $mN_1 \in G/N_1$ is a central involution, so this implies the element order of $aN_1$ in $G/N_1$ is $2^{k-1}$. Recall that $a^{2^{k-2}} \in N$ because $aN$ generates $M/N$, but $a^{2^{k-2}} \notin N_1$. By an analogous argument, $a^{2^{k-2}} \in N \setminus N_2$. Thus $a^{2^{k-2}}=n_1n_2$.


Now choose $b \in G$ such that $bN \in (G/N) \setminus (M/N)$. Since $G/N \cong D_{2^{k-1}}$, $b^2 \in N$. Also note that $bN_1 \in (G/N_1) \setminus (M/N_1)$, so the order of $bN_1$ is $4$ because all elements outside the index $2$ cyclic subgroup of a quaternion group are of order $4$. Thus $b^2 \notin N_1$, so $b^2 \in N \setminus N_1$. By an analogous argument, $b^2 \in N \setminus N_2$. Thus $b^2=n_1n_2=a^{2^{k-2}}$, which implies that $|a|=2^{k-1}$ and $|b|=4$.

Furthermore, since $bN_1 \in (G/N_1) \setminus (M/N_1)$, it is true that $(aN_1)^{(bN_1)}=(aN_1)^{-1}$ and therefore $a(a^b) \in N_1$. By the same argument, $a(a^b) \in N_2$. Therefore $a(a^b)$ is trivial and $a^b=a^{-1}$. Thus the subgroup $U= \langle a,b \rangle \leq G$ is isomorphic to $Q_{2^k}$. Also $n_1n_2 \in U$ is the unique involution, so $n_1 \notin U$. Therefore $G=U \times N_1 \cong Q_{2^k} \times N_1$.
\end{proof}

\begin{lemma}\label{lem:SL(2, 17)_two_legs}
Suppose we have a group $G$ with two distinct normal subgroups $N_1, N_2 \unlhd G$ such that $N_1 \cong N_2 \cong C_2$, $N_1N_2 \cong V_4$, and $G / N_1 \cong G / N_2 \cong \SL(2, 17)$. Then $G \cong C_2 \times \SL(2, 17)$. 
\end{lemma}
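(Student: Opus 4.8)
The plan is to determine the Sylow $2$-subgroup of $G$ with the help of Lemma~\ref{lem:Q32} and then split off $N_1$ using Gasch\"utz's splitting theorem. First I would note that a normal subgroup of order $2$ is necessarily central, since conjugation induces a homomorphism $G \to \Aut(C_2) = 1$; hence $N_1, N_2 \leq Z(G)$, and $N = N_1 N_2 \cong V_4$ is central. From $G/N_1 \cong \SL(2,17)$ and $|N_1| = 2$ we get $|G| = 2\,|\SL(2,17)|$, and since the $2$-part of $|\SL(2,17)| = 17 \cdot 16 \cdot 18$ equals $2^5$, a Sylow $2$-subgroup $P$ of $G$ has order $2^6$.

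Next I would analyze $P$ itself. As central $2$-subgroups, $N_1$ and $N_2$ are contained in every Sylow $2$-subgroup, so $N_1, N_2 \leq P$ with $N_1 N_2 \cong V_4$. Furthermore $P/N_i$ is a Sylow $2$-subgroup of $G/N_i \cong \SL(2,17)$, and the Sylow $2$-subgroups of $\SL(2,q)$ with $q$ odd are generalized quaternion, so $P/N_1 \cong P/N_2 \cong Q_{32} = Q_{2^5}$. These are exactly the hypotheses of Lemma~\ref{lem:Q32} with $k = 5$ (applied to $P$ in place of $G$), which gives $P \cong N_1 \times Q_{32}$. In particular, $N_1$ possesses a complement inside the Sylow $2$-subgroup $P$.

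To finish, I would appeal to Gasch\"utz's theorem: a normal abelian $p$-subgroup has a complement in $G$ if and only if it has one in a Sylow $p$-subgroup containing it. Here $N_1 \cong C_2$ is a central (hence normal abelian) $2$-subgroup lying in $P$, and the previous step produced a complement to $N_1$ in $P$. Therefore $N_1$ has a complement $H$ in $G$, so that $G = N_1 H$ and $N_1 \cap H = 1$. Then $H \cong G/N_1 \cong \SL(2,17)$, and since $N_1$ is central we obtain the internal direct product $G = N_1 \times H \cong C_2 \times \SL(2,17)$.

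The main obstacle is the Sylow computation feeding into Lemma~\ref{lem:Q32}: the conclusion genuinely requires \emph{both} quotients $P/N_1$ and $P/N_2$ to be quaternion, and this is exactly where the second normal subgroup $N_2$ is indispensable—having only $N_1$ would merely exhibit $P$ as some central extension of $Q_{32}$ by $C_2$, which need not split. Once $N_1$ is seen to split off in $P$, the transition to a global complement is handled uniformly by Gasch\"utz's theorem, and the remaining verifications (centrality, orders, and recognizing $H$ as $\SL(2,17)$) are routine.
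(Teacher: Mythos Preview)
Your argument is correct. Both your proof and the paper's hinge on applying Lemma~\ref{lem:Q32} to the Sylow $2$-subgroup $P$ of $G$ to conclude $P \cong N_1 \times Q_{32}$; this is the essential step where the hypothesis on $N_2$ is used. The approaches diverge in how the local splitting is promoted to a global one. The paper proceeds by hand: it also splits off $N_1$ inside the preimages of Sylow $3$- and $17$-subgroups (trivially, via Schur--Zassenhaus), and then argues by order-counting that the subgroup generated by the three local complements $T_2, T_3, T_{17}$ must be a full complement to $N_1$ in $G$. Your appeal to Gasch\"utz's theorem replaces that entire order-counting argument with a single citation and is cleaner; the paper's route, on the other hand, is more self-contained, avoiding an external black box beyond the preceding lemma.
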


\begin{proof}
Abbreviate $\SL(2, 17)$ by $S$. Let $\phi: G \to S$ be the projection from $G$ onto $S$ whose kernel is $N_1$. Then $G = \phi^{-1}(S)$. The group $S$ has a Sylow $2$-subgroup $P_2$ isomorphic to $Q_{32}$, a Sylow $3$-subgroup $P_3$ isomorphic to $C_9$, and a Sylow $17$-subgroup $P_{17}$ isomorphic to $C_{17}$. Now we can consider the subgroups $\phi^{-1}(P_2), \phi^{-1}(P_3), \phi^{-1}(P_{17}) \leq G$. Each of these has $N_1$ as a normal subgroup, and each is an extension of $N_1$. That is, $\phi^{-1}(P_i) \cong N_1. P_i$ for $i \in \{2, 3, 17\}$. For $i = 3$, the extension is coprime and so it splits by Schur-Zassenhaus. Every group must act trivially on $N_1 \cong C_2$, so $\phi^{-1}(P_3) = N_1 \times T_3$, where $T_3 \leq \phi^{-1}(P_3)$ is isomorphic to $P_3$, and the direct product is internal. Similarly, $\phi^{-1}(P_{17}) = N_1 \times T_{17}$, where $T_{17} \leq \phi^{-1}(P_{17})$ is isomorphic to $P_{17}$. 

Now we handle the $i=2$ case. Note that $\SL(2, 17)$ has a unique involution $\alpha$. Since $N_2$ is not the kernel of $\phi$, the nontrivial element of $N_2$ must be mapped to $\alpha$. Obviously, $\alpha \in P_2$, and so $N_2 \leq \phi^{-1}(P_2)$. Note that $\phi^{-1}(P_2)$ has the form $N_1.Q_{32}$, and so it must be a Sylow $2$-subgroup of $G$ by considering its order. Since Sylow subgroups are mapped to Sylow subgroups under surjective homomorphisms, then $\psi(\phi^{-1}(P_2))$ is a Sylow $2$-subgroup of $S$, where $\psi$ is the projection of $G$ onto $S$ whose kernel is $N_2$. So $\psi(\phi^{-1}(P_2)) \cong P_2$. Thus, $\phi^{-1}(P_2) / N_1 \cong P_2 \cong \phi^{-1}(P_2) / N_2 \cong Q_{32}$, so we may apply Lemma \ref{lem:Q32} to get that $\phi^{-1}(P_2) = N_1 \times T_2$, where $T_2 \leq \phi^{-1}(P_2)$, $T_2 \cong P_2$, and the direct product is internal. 

Now, we have $\phi^{-1}(P_i) = N_1 \times T_i$, where $T_i \leq G$ and $T_i \cong P_i$, for all $i \in \{2, 3, 17\}$. So consider the subgroup $T = N_1 \times \langle T_2, T_3, T_{17} \rangle \leq G$, where the direct product is internal. The group $\langle T_2, T_3, T_{17} \rangle$ has subgroups $T_2$, $T_3$, and $T_{17}$, and so has order at least $2^5 \cdot 3^2 \cdot 17$. Then $|T| \geq 2^6 \cdot 3^2 \cdot 17$. However, $|T| \leq |G| = 2^6 \cdot 3^2 \cdot 17$, and so $|T| = |G|$. Thus, $T = G$. 

If we let $H = \langle T_2, T_3, T_{17} \rangle$, then $H$ is a complement to $N_1$ in $G$, since $G = N_1 \times H$. Since $N_1 \unlhd G$, then $G / N_1 \cong H$. However, by assumption, $G / N_1 \cong \SL(2, 17)$. Therefore, $G = N_1 \times H \cong C_2 \times \SL(2, 17)$ as desired.
\end{proof}

\begin{lemma}\label{lem:psl217_or_sl217_subgroup}
Let $G = P.\PSL(2, 17)$ where $P$ is a $2$-group. Then either $G$ contains a subgroup isomorphic to $\PSL(2, 17)$ or it contains a subgroup isomorphic to $\SL(2, 17)$. 
\end{lemma}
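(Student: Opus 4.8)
The plan is to prove, by induction on $|P|$, the slightly stronger statement that any extension $G = P.S$ of a $2$-group $P$ by a group $S \in \{\PSL(2,17), \SL(2,17)\}$ contains a subgroup isomorphic to $\PSL(2,17)$ or to $\SL(2,17)$; the lemma is the special case $S = \PSL(2,17)$. Admitting $\SL(2,17)$ at the top is what lets the induction close up, since lifting a subgroup through a quotient can produce either of the two groups. The facts from the theory of covering groups that drive the argument are that the Schur multiplier of $\PSL(2,17)$ is $C_2$, realized by its universal cover $\SL(2,17)$, whereas the Schur multiplier of $\SL(2,17)$ is trivial; equivalently $H^2(\SL(2,17),\F_2)=0$ while $H^2(\PSL(2,17),\F_2)\cong C_2$.

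For the inductive step I would assume $P \neq 1$ and choose a minimal normal subgroup $M$ of $G$ with $M \le P$; since $P$ is a normal $2$-group, $M$ is elementary abelian. Passing to $\ol{G} = G/M = (P/M).S$ and applying the inductive hypothesis yields a subgroup $\ol{H} \le \ol{G}$ with $\ol{H} \cong \PSL(2,17)$ or $\SL(2,17)$. Let $H$ be its full preimage, so $H = M.\ol{H}$ is again an extension of a $2$-group by one of the two groups. If $M \subsetneq P$, then the $2$-group $M$ is strictly smaller than $P$, so the inductive hypothesis applies to $H$ and produces a copy of $\PSL(2,17)$ or $\SL(2,17)$ inside $H \le G$. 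Hence one is reduced to the case $M = P$, i.e.\ $P$ is itself a minimal normal subgroup of $G$: either $P$ is central of order $2$, or $P$ is a nontrivial irreducible $\F_2 S$-module.

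In the central case $P \cong C_2 \le Z(G)$, the group $G$ is a central extension of $S$ by $C_2$, classified by $H^2(S,\F_2)$. When $S = \SL(2,17)$ this group vanishes, so $G \cong C_2 \times \SL(2,17)$ and we are done; when $S = \PSL(2,17)$ the only two cohomology classes give $G \cong C_2 \times \PSL(2,17)$ or the stem cover $G \cong \SL(2,17)$, each of which plainly contains one of the target groups. It is precisely here, when central involutions accumulate across the branches of the induction, that Lemmas~\ref{lem:Q32} and~\ref{lem:SL(2, 17)_two_legs} earn their place: if a central $V_4$ ever arises with two distinct quotients isomorphic to $\SL(2,17)$, the two-legs lemma forces $G \cong C_2 \times \SL(2,17)$ and again exhibits an $\SL(2,17)$ subgroup, which keeps the kernel from growing and lets the bookkeeping go through.

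The main obstacle is the remaining possibility, where $P$ is a nontrivial irreducible $\F_2 S$-module and I must show the extension $G = P.S$ splits, so that a complement $\cong S$ appears. This amounts to verifying that $H^2(S,P)=0$ for every nontrivial irreducible $\F_2$-module $P$ of $S = \PSL(2,17)$ and of $S = \SL(2,17)$. Since $H^2(S,P)$ is then a $2$-group, restriction to a Sylow $2$-subgroup is injective, so it suffices to control the cohomology over $D_{16}$ (for $\PSL(2,17)$) and $Q_{32}$ (for $\SL(2,17)$); the rigid structure of these particular $2$-groups, already isolated in Lemma~\ref{lem:Q32}, is exactly the leverage one wants, and where a clean theoretical argument is awkward the finitely many irreducible $\F_2$-modules can be enumerated and their second cohomology checked directly in GAP, as is done elsewhere in the paper. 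Once splitting is established in this last case, every branch of the induction terminates in a copy of $\PSL(2,17)$ or $\SL(2,17)$, completing the proof.
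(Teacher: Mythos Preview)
Your inductive scaffold—reduce to the case where $P$ is a single irreducible $\F_2 S$-module for $S \in \{\PSL(2,17),\SL(2,17)\}$, then split into central versus faithful action—is exactly the paper's structure (the paper inducts on the number of chief factors, which amounts to the same thing), and your treatment of the central case via Schur multipliers is correct.

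The gap is your nontrivial-irreducible case. You assert $H^2(S,P)=0$ and offer restriction to a Sylow $2$-subgroup or a GAP check; neither is carried out, and restriction to $D_{16}$ or $Q_{32}$ is not obviously decisive, since the module decomposes further there and those $2$-groups have plenty of nonvanishing $H^2$. The paper closes this hole by citing a theorem of Burichenko \cite{Burichenko-Odd-q}: a nonsplit extension of $\PSL(2,q)$, $q$ odd, by an irreducible $\F_2$-module must be $\SL(2,q)$. That settles $S=\PSL(2,17)$ outright. For $S=\SL(2,17)$ the paper does \emph{not} assert the analogous vanishing; instead it rewrites $P.\SL(2,17)=P.Z.\PSL(2,17)$, swaps $P$ and $Z$ via a lemma of Huppert, applies Burichenko to $P.\PSL(2,17)$, and in the nonsplit subcase obtains two distinct normal $C_2$'s each with $\SL(2,17)$ quotient—precisely the configuration of Lemma~\ref{lem:SL(2, 17)_two_legs}, which then forces $C_2\times\SL(2,17)$. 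So your paragraph invoking the two-legs lemma is misplaced: in your own framework, once $P$ is a single irreducible module, a central $V_4$ never arises (the trivial-module case gives $P\cong C_2$, not $V_4$), and if your direct $H^2(\SL(2,17),P)=0$ claim were established the lemma would be unnecessary on your route. Either carry out that cohomology verification explicitly, or follow the paper's Burichenko-plus-two-legs reduction and place Lemma~\ref{lem:SL(2, 17)_two_legs} where it actually does the work.
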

\begin{proof}
Take the chief series $G = P_1.P_2.\cdots.P_k.\PSL(2, 17)$. Then each chief factor $P_i$ is an elementary abelian $2$-group. We induct on the number $k$ of abelian chief factors of $G$. For the $k=0$ case, $G = \PSL(2, 17)$, so we are done. 

Next, assume the proposition holds for $k \geq 0$, and assume $G$ has $k+1$ abelian chief factors. So $G = P_1.P_2.\cdots.P_k.P_{k+1}.\PSL(2, 17)$. Then $G$ has a quotient $K = P_2.\cdots.P_k.P_{k+1}.\PSL(2, 17)$, which has $k$ abelian chief factors. By the inductive hypothesis, $K$ contains a subgroup isomorphic to either $\PSL(2, 17)$ or $\SL(2, 17)$. 

Suppose first that $\PSL(2, 17) \leq K$. Then $G = P_1.K$ and so $G$ has a subgroup $P_1.\PSL(2, 17)$. If the extension splits, $\PSL(2, 17) \leq P_1.\PSL(2, 17) \leq G$ and we are done. If the extension does not split, then $\PSL(2, 17)$ must act irreducibly on $P_1$ since $P_1$ is a chief factor. By \cite[Theorem 3]{Burichenko-Odd-q}, this extension must be equal to $\SL(2, 17)$ and we are done.

Suppose next that $\SL(2, 17) \leq K$. Then $G = P_1.K$ and so $G$ has a subgroup $L = P_1.\SL(2, 17)$. Let $Z = Z(\SL(2, 17)) \cong C_2$. Then $L = P_1.Z.\PSL(2, 17)$. Now, viewing $P_1$ as an $\mathbb{F}_2 Z$-module, $P_1$ is irreducible. Then by \cite[V,, Satz 5.17]{EndlicheGruppenI}, the factors $Z$ and $P_1$ commute, so $L = Z.P_1.\PSL(2, 17)$. If the extension $P_1.\PSL(2, 17)$ splits, then $L$ has a subgroup $Z . \PSL(2, 17) = SL(2, 17)$ and we are done. If not, then we again cite \cite[Theorem 3]{Burichenko-Odd-q} to get that $P_1.\PSL(2, 17) = \SL(2, 17)$. So $P_1 \cong C_2 \cong Z$, and $L / P_1 \cong L / Z \cong \SL(2, 17)$. Then by Lemma \ref{lem:SL(2, 17)_two_legs}, $L \cong C_2 \times \SL(2, 17)$. Thus, $\SL(2, 17) \leq L \leq G$ and we are done.
\end{proof}

Lemma \ref{lem:psl217_or_sl217_subgroup} allows us to reduce extensions of $\PSL(2, 17)$ by a $2$-group to either $\PSL(2, 17)$ or $\SL(2, 17)$. Next, we use this to show that if $\{2,3,17\}$ forms a triangle in $\graphcomp(G)$, then there can be no other edges of the form $\edge{2}{p}$ or $\edge{3}{p}$. At the same time, we rule out the possibility of a $\{3, 17, p\}$ triangle.

\begin{lemma}\label{lem:psl217-no2p}
If $G$ is pseudo $\PSL(2, 17)$-solvable and $\{2, 3, 17\}$ forms a triangle in $\graphcomp(G)$, then there is no edge $\edge{2}{p}$, for $p \notin \{2, 3, 17\}$. 
\end{lemma}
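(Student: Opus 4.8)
The plan is to argue by contradiction: assume the $\{2,3,17\}$ triangle is present in $\graphcomp(G)$ and that some edge $\edge{2}{p}$ with $p\notin\{2,3,17\}$ is also present, and then manufacture an element of order $2p$ in $G$. Since the triangle forces $G$ to be nonsolvable, $G$ is strictly pseudo $\PSL(2,17)$-solvable, so by Corollary \ref{cor:subgroupcor} I would pass to a subgroup $K=N.\PSL(2,17)$ with $N$ solvable and $\pi(K)=\pi(G)$; in particular $p\mid |N|$, as $p$ is not a prime divisor of $\PSL(2,17)$. First I would clear the primes $3$ and $17$ out of $N$: if $3\mid|N|$, then the solvable subgroup $N.D_{34}$ (using $D_{34}\leq\PSL(2,17)$) satisfies $\pi(N.D_{34})=\pi(G)$, so $\graphcomp(G)$ would be obtained by deleting edges from a triangle-free graph by Lemma \ref{lem:solvablegroups}, destroying the triangle; the case $17\mid|N|$ is ruled out identically using $D_{18}$. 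Hence $3,17\nmid|N|$.

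The main obstacle is that the extension $N.\PSL(2,17)$ is \emph{not} coprime, since both factors are even, which blocks a direct fixed-point representation argument. This is exactly what the preparatory lemmas of the section are designed to overcome. Because $3,17\nmid|N|$, the Hall $\{2,3,17\}$-subgroup of $K$ has the form $K'=S.\PSL(2,17)$ with $S$ a Sylow $2$-subgroup of $N$, i.e.\ a $2$-group, by Lemma \ref{lem:Hallsubgroups}. Applying Lemma \ref{lem:psl217_or_sl217_subgroup} to $K'$, I obtain a subgroup $T_0\leq K'$ isomorphic to $\PSL(2,17)$ or $\SL(2,17)$. The $\SL(2,17)$ alternative is impossible: $\SL(2,17)$ contains a cyclic subgroup of order $18$, hence an element of order $6$, which would be an honest element of $G$ and contradict the $\{2,3,17\}$ triangle. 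So $T_0\cong\PSL(2,17)$, and since $T_0$ is simple while $N$ is solvable, $T_0\cap N=1$. This is the heart of the proof: the non-coprime extension has been traded for an honest copy of $\PSL(2,17)$ meeting $N$ trivially.

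To finish, I would choose a chief factor $V=X/Y$ of $K$ with $Y\leq X\leq N$ that is an elementary abelian $p$-group; such a factor exists because $p\mid|N|$ while $\PSL(2,17)$ contributes no factor of $p$. Conjugation makes $V$ an $\F_p T_0$-module, and since $T_0\cap Y\leq T_0\cap N=1$, the section $(XT_0)/Y$ is exactly the coprime semidirect product $V\rtimes T_0$ (here $(|V|,|T_0|)=1$ as $p\notin\{2,3,17\}$). Now $T_0\cong\PSL(2,17)$ has an involution $t$ whose image has a nonzero fixed point in every irreducible complex representation: for the trivial representation this is immediate, and for a nontrivial irreducible $\rho$ (necessarily faithful, since $\PSL(2,17)$ is simple) the matrix $\rho(t)$ has eigenvalues $\pm1$ and cannot equal $-I$, because that central scalar would force $t\in Z(\PSL(2,17))=1$. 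Thus the hypothesis of Lemma \ref{lem:rep_theory_biz} holds, and applying it to $V\rtimes T_0$ (with the prime of $T$ taken to be $2$ and the prime of $N$ taken to be $p$) produces an element of order $2p$ in this section. A preimage of it in $XT_0\leq K\leq G$ has order divisible by $2p$, so a suitable power gives an element of order exactly $2p$ in $G$, contradicting $\edge{2}{p}\in\graphcomp(G)$.

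I expect the delicate point to be the elimination of the $\SL(2,17)$ case, which is where both the triangle hypothesis and the earlier structural lemmas (Lemmas \ref{lem:Q32} and \ref{lem:SL(2, 17)_two_legs}, folded into Lemma \ref{lem:psl217_or_sl217_subgroup}) are essential; once an honest split $\PSL(2,17)$ is secured the representation-theoretic conclusion is routine, since every involution of the simple group $\PSL(2,17)$ automatically has the required fixed-point property.
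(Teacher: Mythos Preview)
Your proof is correct and follows the same overall strategy as the paper: pass to $K=N.\PSL(2,17)$ with $3,17\nmid|N|$, invoke Lemma~\ref{lem:psl217_or_sl217_subgroup} to locate a copy of $\PSL(2,17)$ or $\SL(2,17)$, discard $\SL(2,17)$ via its element of order~$6$, and then produce an element of order $2p$ from a coprime action of $\PSL(2,17)$ on a $p$-group.

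The execution differs in two minor respects. The paper first passes to a Hall $\{2,3,17,p\}$-subgroup and then to a quotient $P.S.\PSL(2,17)$ with $P$ a nontrivial $p$-group at the bottom, applying Lemma~\ref{lem:psl217_or_sl217_subgroup} to the top part $S.\PSL(2,17)$; you instead apply the lemma inside the Hall $\{2,3,17\}$-subgroup of $K$ to obtain an honest complement $T_0\cong\PSL(2,17)$ with $T_0\cap N=1$, and then act on any $p$-chief factor of $K$ lying in $N$. For the final fixed-point step, the paper observes that $D_{16}$ (the Sylow $2$-subgroup of $\PSL(2,17)$) cannot act Frobeniusly, while you appeal to Lemma~\ref{lem:rep_theory_biz} together with the observation that a faithful irreducible image of an involution cannot equal $-I$ in a centerless group. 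Both routes are equally valid; yours has the pleasant feature that the involution argument is self-contained and does not require identifying the Sylow $2$-subgroup.
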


\begin{proof}
Suppose for contradiction that $\edge{2}{p}$ is an edge in $\graphcomp(G)$. Since $\graphcomp(G)$ contains a triangle, then $G$ must be strictly pseudo $\PSL(2, 17)$-solvable. By Corollary \ref{cor:subgroupcor}, $G$ has a subgroup $K = N.\PSL(2, 17)$ where $N$ is solvable and $\pi(K) = \pi(G)$. Thus, there is a $\{2, 3, 17\}$ triangle in $\graphcomp(K)$, and $\edge{2}{p} \in \graphcomp(K)$.

If $3 \mid N$, then the subgroup $K_1 = N.D_{34} \leq K$ is solvable, and $\pi(K_1) = \pi(K)$. Then $\graphcomp(K_1)$ is triangle-free, and since $\pi(K_1) = \pi(K)$, so is $\graphcomp(K)$. This contradicts $\graphcomp(K)$ having a triangle, so $3 \nmid |N|$. Similarly, if $17 \mid |N|$, then considering the subgroup $K_2 = N.D_{18} \leq K$, we arrive at a contradiction. Therefore, $17 \nmid |N|$.

Let $\pi := \{2, 3, 17, p\}$. Since $K$ is $\pi$-separable, we may consider a Hall $\pi$-subgroup $H_\pi$ of $K$, which by Lemma \ref{lem:Hallsubgroups}, takes the form $H_\pi = H_{2p}.\PSL(2, 17)$, where $H_{2p}$ is a Hall $\{2, p\}$-subgroup of $N$. Consider a chief series of $H_\pi = H_{2p}.\PSL(2, 17)$, so that the chief factors of $H_{2p}$ are elementary abelian $2$-groups or elementary abelian $p$-groups. From this, we observe that $H_\pi$ must have a quotient group $H$ of the form $H = P.S.\PSL(2,17)$, where $P$ is a nontrivial $p$-group, and $S$ is a (possibly trivial) $2$-group. 
Apply Lemma \ref{lem:psl217_or_sl217_subgroup} to get that $H$ has a subgroup $L$ which takes the form $L = P.\PSL(2, 17)$ or $L = P.\SL(2, 17)$. Since $\pi(L) = \pi$ and $L$ is a section of $K$, $\graphcomp(L)$ contains the edge $\edge{2}{p}$ and the triangle $\{2, 3, 17\}$.

If $L = P.\SL(2,17)$, we arrive at a contradiction since $\SL(2, 17)$ has an element of order $6$, which would contradict the $\{2, 3, 7\}$ triangle in $\graphcomp(L)$. Thus, $L = P.\PSL(2,17)$. So consider the subgroup $P.D_{16} \leq L$, which must have an element of order $2p$ since $D_{16}$ cannot act Frobeniusly and $P$ is a $p$-group. Therefore, $L$ has an element of order $2p$, which contradicts the $\edge{2}{p}$ edge in $\graphcomp(L)$. Hence, there is no edge $\edge{2}{p}$ in $\graphcomp(G)$.
\end{proof}

\begin{lemma}\label{lem:SL(2,17)reptheory}
There exists an element $t \in \SL(2,17)$ of order $3$ such that for every irreducible complex representation $\rho$ of $\SL(2,17)$, $\rho(t)$ has a fixed point.
\end{lemma}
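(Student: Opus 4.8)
The plan is to translate the fixed-point condition into a purely numerical statement about the character table of $\SL(2,17)$ and then verify it family by family. For an element $t$ of order $3$ and an irreducible representation with character $\chi$ acting on a space $V$, the dimension of the fixed space is
\[
\dim V^{\langle t\rangle} = \frac{1}{3}\bigl(\chi(1) + \chi(t) + \chi(t^2)\bigr),
\]
so $\rho(t)$ has a nonzero fixed vector if and only if $\chi(1) + \chi(t) + \chi(t^2) > 0$. Thus it suffices to exhibit one order-$3$ element $t$ for which this inequality holds for every irreducible $\chi$.

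First I would pin down the order-$3$ elements. Since $3 \mid q+1 = 18$ but $3 \nmid q-1 = 16$, every element of order $3$ lies in the (cyclic) non-split torus $C_{18}$, and the Weyl element of that torus inverts it, so $t$ is conjugate to $t^{-1} = t^2$. Hence $\chi(t) = \chi(t^2)$ is real and the inequality reduces to $\chi(1) + 2\chi(t) > 0$, i.e. $\chi(t) > -\chi(1)/2$. In particular the choice of $t$ is essentially forced: there is a single conjugacy class of order-$3$ elements, and $t$ is a regular semisimple (non-central) element.

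Next I would run through the standard character table of $\SL(2,q)$ with $q = 17$, whose irreducible degrees are $1$, $q = 17$ (Steinberg), $q+1 = 18$ (principal series), $q-1 = 16$ (cuspidal series), and two exceptional pairs of degrees $(q+1)/2 = 9$ and $(q-1)/2 = 8$. Because $t$ is regular semisimple and non-split, the values $\chi(t)$ are small and easy to bound: the trivial character gives $1$, the Steinberg character gives $-1$, every principal series character vanishes on $t$, and each cuspidal character takes the value $-(\beta(y) + \beta(y)^{-1})$ for a cube root of unity $\beta(y)$, hence lies in $\{-2,1\}$. For the exceptional characters one uses that on a regular non-split semisimple class the two constituents of a reducible induced character coincide and equal half of its value; since the relevant induced values at $t$ are $0$ (split, degree-$9$ case) and $-2$ (non-split, degree-$8$ case), the degree-$9$ characters take value $0$ and the degree-$8$ characters take value $-1$ at $t$. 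Substituting into $\chi(1) + 2\chi(t)$ gives $3$, $15$, $18$, $\ge 12$, $9$, and $6$ respectively, all strictly positive.

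The only genuinely delicate point is the behavior of the four exceptional (half-dimensional) characters on the order-$3$ class, since their individual values are not recorded by the generic principal/cuspidal formulas; I would resolve this via the ``equal halves on regular semisimple classes'' principle above, and in any case the crude bound $\chi(t) \ge -1$ on these classes already beats $-\chi(1)/2 \in \{-4,\, -\tfrac{9}{2}\}$ with room to spare. As a cross-check, all of these values (and hence the inequality for every $\chi$) are confirmed directly by a GAP \cite{GAP} computation of the character table of $\SL(2,17)$, which is the route actually taken for the analogous tables elsewhere in the paper.
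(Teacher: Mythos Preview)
Your argument is correct and takes a genuinely different route from the paper. The paper does \emph{not} compute character values directly: it first disposes of the non-faithful irreducibles by factoring through $\PSL(2,17)$ and invoking Lemma~\ref{lem:extensions_of_A6}, then handles the faithful discrete series by observing that the Sylow $2$- and $17$-subgroups already act fixed-point freely, so if order-$3$ elements did too then $\SL(2,17)$ would be a perfect Frobenius complement (contradicting that $\SL(2,5)$ is the only such group), and finally treats the faithful principal series by noting they are induced from a $\{2,17\}$-subgroup, whence order-$3$ elements automatically fix a vector. Your approach instead reduces everything to the single inequality $\chi(1)+2\chi(t)>0$ and reads it off the generic $\SL(2,q)$ character table family by family. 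Your method is more elementary and self-contained (no Frobenius-complement classification, no appeal to Lemma~\ref{lem:extensions_of_A6}); the paper's method is more structural and explains \emph{why} the fixed points exist in each regime.

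One caveat: your fallback remark that ``the crude bound $\chi(t)\ge -1$'' holds on the exceptional characters is not justified and is not a priori true---for a degree-$8$ character the eigenvalue count alone permits $\chi(t)=-4$, which is exactly the no-fixed-point case you are trying to exclude. Fortunately you do not need it: your primary ``equal halves on regular semisimple classes'' observation is correct (the two half-characters of each exceptional pair differ only on the unipotent-type classes that split when passing from $\GL$ to $\SL$, not on the regular semisimple class of $t$), and together with the GAP confirmation this closes the argument. I would simply delete the crude-bound sentence.
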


\begin{proof}
Let $\rho : \SL(2,17) \rightarrow \GL(n, \mathbb{C})$ be an irreducible complex representation. Note that in $\SL(2,17)$, all elements of order $3$ are conjugate. Thus, it will suffice to find any element $t$ of order $3$ for which $\rho(t)$ has a fixed point.

If $\rho$ is non-faithful, then the kernel of $\rho$ contains $Z(\SL(2,17))$, so $\rho$ comes from a representation $\sigma$ of $\SL(2,17) / Z(\SL(2,17)) \cong \PSL(2,17)$. Now suppose for contradiction that no element of order $3$ in $\PSL(2,17)$ has a fixed point under $\sigma$. Let $r$ be a prime such that $r \nmid |\PSL(2,17)|$. Then applying \cite[Lemma 3.4]{A5classification}, we find a nontrivial module $R$ of $\PSL(2,17)$ over a finite field of characteristic $r$ such that no element of order $3$ in $\PSL(2,17)$ has a fixed point. Consider the group $G = R \rtimes \PSL(2,17)$, where the semidirect product comes from the module action. By Lemma \ref{lem:extensions_of_A6}, we must have $R = 1$, a contradiction since $R$ is nontrivial. Therefore, there must be an element $s \in \PSL(2,17)$ of order $3$ for which $\sigma(s)$ has a fixed point. Then there is a corresponding element $t \in \SL(2,17)$ of order $3$ such that $\rho(t)$ has a fixed point as well. This completes the non-faithful case.

Now suppose that $\rho$ is a faithful representation of $\SL(2,17)$. These representations are well studied, and are usually divided into the \textit{discrete series} and the \textit{principal series} as in \cite{SilbergerSL217}. First take the discrete series. The Sylow $17$-subgroup always acts Frobeniusly in these representations, as noted in \cite[Section 2]{SilbergerSL217}. Also note that the Sylow $2$-subgroup is $Q_{32}$, which must act Frobeniusly in any faithful representation. 
If the Sylow $3$-subgroup also acts without fixed points, then the whole group, $\SL(2,17)$, would act Frobeniusly. This is a contradiction, because $\SL(2,17)$ is perfect and $\SL(2,5)$ is the only perfect Frobenius complement, as stated in \cite[Section 6A]{Isaacs2008FiniteGT}. Therefore the Sylow $3$-subgroup does not act Frobeniusly and the elements of order $3$ act with fixed points.

Now consider the principal series. Table 5 of \cite{HumprheysSL217} lists the character table for $\SL(2,17)$ with the top five rows comprising the principal series. Note that a representation is faithful if and only if the central involution, $z$, of $\SL(2,17)$ is not in the kernel, and $z$ is in the kernel of a representation exactly when $\chi(z)=\chi(1)$ where $\chi$ is the character corresponding to $\rho$. In \cite[Table 5]{HumprheysSL217}, it is easy to see that in the principal series, $\chi(z)=\chi(1)$ except in row three when $i=1,3,5,7$. That is, the principal series contains only four faithful representations, all of degree $18$. However, as explained in \cite[Section 7]{HumprheysSL217}, all degree $18$ representations are induced from representations of a subgroup of order $16 \cdot 17$. It is clear that all elements of order $3$ have a fixed point in the induced module. Thus the elements of order $3$ act with fixed points in both the principal and discrete series.

Therefore there exists an element of order $3$ which acts with fixed points in every irreducible complex representation.
\end{proof}

The authors note that we also verified the above lemma computationally using $\text{GAP}$.

\begin{lemma}\label{lem:psl217_no_3-p_or_3-col}
Let $G$ be a strictly pseudo $\PSL(2, 17)$-solvable group and $p \notin \{2, 3, 17\}$ a prime divisor of $|G|$. Then at least one of the following holds:
\begin{enumerate}
    \item $\graphcomp(G)$ is $3$-colorable and triangle-free.
    \item $\edge{3}{p}$ is not an edge in $\graphcomp(G)$. 
\end{enumerate} 
\end{lemma}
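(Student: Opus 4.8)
The plan is to split on whether $3$ or $17$ divides $|N|$, where $K = N.\PSL(2,17) \leq G$ is the subgroup furnished by Corollary \ref{cor:subgroupcor}, with $N$ solvable and $\pi(K) = \pi(G)$. If $3 \mid |N|$, then taking the dihedral subgroup $D_{34} \leq \PSL(2,17)$ of order $2\cdot 17$, the subgroup $K_1 = N.D_{34}$ is solvable and satisfies $\pi(K_1) = \pi(G)$ (precisely because $3 \in \pi(N)$), so $\graphcomp(G)$ is a subgraph of the $3$-colorable, triangle-free graph $\graphcomp(K_1)$, giving conclusion (1). Symmetrically, if $17 \mid |N|$, then $K_2 = N.D_{18}$ (using $D_{18} \leq \PSL(2,17)$ of order $2\cdot 3^2$) yields conclusion (1). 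Hence I may assume $3, 17 \nmid |N|$, and I will instead exhibit an element of order $3p$ in $G$, which is exactly conclusion (2). Note that $p \mid |N|$, since $p \in \pi(G) \setminus \{2,3,17\}$.

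Next I would pass to a Hall $\pi$-subgroup $H_\pi \leq K$ for $\pi = \{2,3,17,p\}$; by Lemma \ref{lem:Hallsubgroups} it has the form $H_\pi = H_{2p}.\PSL(2,17)$, where $H_{2p}$ is a solvable Hall $\{2,p\}$-subgroup of $N$. Refining a chief series of $H_\pi$ through $H_{2p}$, every chief factor inside $H_{2p}$ is an elementary abelian $2$- or $p$-group. Choosing the \emph{highest} $p$-group chief factor $V$ (so that every factor above it within $H_{2p}$ is a $2$-group) and quotienting out everything strictly below it, I obtain a quotient $H = V.B.\PSL(2,17)$ in which $V$ is a nontrivial elementary abelian $p$-group and $B$ is a possibly trivial $2$-group. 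The crucial structural step is then to apply Lemma \ref{lem:psl217_or_sl217_subgroup} to the $2$-group extension $H/V \cong B.\PSL(2,17)$, obtaining a subgroup isomorphic to $\PSL(2,17)$ or $\SL(2,17)$; pulling this back through $H \to H/V$ produces a subgroup $L \leq H$ of the form $L = V.T$ with $T \in \{\PSL(2,17), \SL(2,17)\}$. Since $p \notin \{2,3,17\} = \pi(T)$, this extension is coprime.

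Finally I would feed this into the representation-theoretic machinery. Lemma \ref{lem:SL(2,17)reptheory} supplies an order-$3$ element of $\SL(2,17)$ acting with a fixed point in every irreducible complex representation, and the same property passes to $\PSL(2,17)$: every irreducible representation of $\PSL(2,17)$ inflates to one of $\SL(2,17)$, and the chosen order-$3$ element descends to an order-$3$ element of $\PSL(2,17)$. Thus in both cases $T$ meets the hypothesis of Lemma \ref{lem:rep_theory_biz} (taking the normal factor to be $V$, the relevant prime of $T$ to be $3$, and the relevant prime of $V$ to be $p$), so $L$ contains an element of order $3p$. As $L$ sits inside the quotient $H$ of $H_\pi \leq K \leq G$, any preimage in $H_\pi$ has order divisible by $3p$; since $\pi(H_\pi) = \{2,3,17,p\}$, a suitable power has order exactly $3p$, so $G$ has an element of order $3p$ and hence $\edge{3}{p} \notin \graphcomp(G)$.

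Most of the genuine difficulty is already absorbed into the two inputs invoked above, so within this proof the delicate points are organizational: first, extracting the quotient in the clean form $V.B.\PSL(2,17)$ with $B$ a \emph{pure} $2$-group, which is precisely what makes Lemma \ref{lem:psl217_or_sl217_subgroup} applicable and lets me replace the unwieldy middle layer by a bona fide copy of $\PSL(2,17)$ or $\SL(2,17)$; and second, confirming that the fixed-point hypothesis of Lemma \ref{lem:rep_theory_biz} holds uniformly across both possibilities for $T$. Unlike Lemma \ref{lem:psl217-no2p}, no contradiction arises from the appearance of $\SL(2,17)$ here, since we are constructing an element of order $3p$ rather than excluding one, so both outcomes of Lemma \ref{lem:psl217_or_sl217_subgroup} are equally acceptable.
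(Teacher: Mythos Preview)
Your proposal is correct and follows essentially the same approach as the paper: reduce via Corollary~\ref{cor:subgroupcor}, dispose of the cases $3\mid |N|$ and $17\mid |N|$ by passing to a solvable subgroup with full prime set, then in the remaining case extract a quotient $V.B.\PSL(2,17)$ and invoke Lemma~\ref{lem:psl217_or_sl217_subgroup} to reach $V.T$ with $T\in\{\PSL(2,17),\SL(2,17)\}$. The only cosmetic differences are that the paper uses $A_4$ rather than $D_{18}$ when $17\mid |N|$ (either works), and that for $T=\PSL(2,17)$ the paper quotes Lemma~\ref{lem:extensions_of_A6} directly whereas you derive the fixed-point property for $\PSL(2,17)$ by descent from Lemma~\ref{lem:SL(2,17)reptheory} and then apply Lemma~\ref{lem:rep_theory_biz} uniformly in both cases; your uniform treatment is a pleasant minor streamlining.
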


\begin{proof}
By Corollary \ref{cor:subgroupcor}, $G$ has a subgroup $K = N.\PSL(2, 17)$, where $N$ is solvable and $\pi(G) = \pi(K)$. If $3 \mid |N|$, consider $K_1 = N.D_{34} \leq K \leq G$. Since $K_1$ is solvable, $\graphcomp(K_1)$ is $3$-colorable and triangle-free, and since $\pi(K_1) = \pi(G)$, then $\graphcomp(G)$ is $3$-colorable and triangle-free. Then we are done, and so we may assume $3 \nmid |N|$. Similarly, if $17 \mid |N|$, consider $K_2 = N.A_4 \leq K \leq G$. Since $K_2$ is solvable, $\graphcomp(K_2)$ is $3$-colorable and triangle-free, so as before, then $\graphcomp(G)$ is $3$-colorable and triangle-free. Then we are done, and so we may assume $17 \nmid |N|$.

So $3, 17 \nmid |N|$. Let $\pi = \{2, 3, 17, p\}$, so $K$ is $\pi$-separable and contains a Hall $\pi$-subgroup $H_\pi$. By Lemma \ref{lem:Hallsubgroups}, $H_\pi$ takes the form $H_\pi = H_{2p}.\PSL(2, 17)$, where $H_{2p}$ is the Hall $\{2, p\}$-subgroup of $N$. By considering a chief series of $H_\pi$, we have that $H_\pi$ has a quotient $L = P.Q.\PSL(2, 17)$, where $P$ is a nontrivial $p$-group and $Q$ is a possibly trivial $2$-group. By Lemma \ref{lem:psl217_or_sl217_subgroup}, $L$ has a subgroup $M$ which either takes the form $M = P.\PSL(2, 17)$ or $M = P.\SL(2, 17)$. If the former holds, apply Lemma \ref{lem:extensions_of_A6} to retrieve an element of $3p$ in $M$. If the latter holds, apply Lemma \ref{lem:rep_theory_biz} with Lemma \ref{lem:SL(2,17)reptheory} to retrieve an element of $3p$ in $M$. In either case, $M \leq L$ has an element of order $3p$. Since $L$ is a quotient of $H_\pi \leq G$, then $G$ has an element of order $3p$ as well, and so $\edge{3}{p} \notin \graphcomp(G)$. 
\end{proof}

\begin{corollary}\label{cor:psl217-only1triangle}
Let $G$ be pseudo $\PSL(2, 17)$-solvable. If a triangle exists in $\graphcomp(G)$, then it is the triangle $\{2, 3, 17\}$.
\end{corollary}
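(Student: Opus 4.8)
The plan is to derive this corollary directly by combining the two preceding lemmas, which between them localize every possible triangle and then eliminate the only dangerous configuration. Lemma \ref{lem:psl217_no_3-17} tells us that deleting the single edge $\edge{3}{17}$ from $\graphcomp(G)$ already produces a triangle-free graph; consequently every triangle of $\graphcomp(G)$ must use that edge, hence must be of the form $\{3,17,x\}$ for some third vertex $x$ adjacent to both $3$ and $17$. Lemma \ref{lem:psl217_no_3-p_or_3-col} then handles the case where this third vertex is an ``outside'' prime. So the entire argument is a short case analysis on $x$, and the real content has been pushed into those two lemmas (and ultimately into Lemmas \ref{lem:psl217_or_sl217_subgroup}, \ref{lem:SL(2,17)reptheory}, and \ref{lem:rep_theory_biz}).

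First I would dispose of the solvable case: if $G$ is solvable, Lemma \ref{lem:solvablegroups} gives that $\graphcomp(G)$ is $3$-colorable and triangle-free, so no triangle exists and the statement holds vacuously. Thus I may assume $G$ is strictly pseudo $\PSL(2,17)$-solvable and that some triangle does exist. By the observation above, that triangle is $\{3,17,x\}$. Since $x$ is a vertex of $\graphcomp(G)$, it is a prime divisor of $|G|$ distinct from $3$ and $17$, so either $x=2$ or $x=p$ for some prime $p\notin\{2,3,17\}$. In the first case the triangle is exactly $\{2,3,17\}$, which is the desired conclusion. In the second case the edge $\edge{3}{p}$ is present in $\graphcomp(G)$, and since $p\notin\{2,3,17\}$ divides $|G|$, Lemma \ref{lem:psl217_no_3-p_or_3-col} applies: it forces either that $\graphcomp(G)$ is triangle-free, contradicting the existence of our triangle, or that $\edge{3}{p}\notin\graphcomp(G)$, contradicting $x=p$ being adjacent to $3$. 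Both alternatives are contradictory, so $x\neq p$, and the only surviving triangle is $\{2,3,17\}$.

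I do not expect any genuine obstacle inside the corollary itself, since it is essentially a bookkeeping combination of the two lemmas; the one point requiring mild care is confirming that Lemma \ref{lem:psl217_no_3-p_or_3-col} is applicable, which just needs $G$ strictly pseudo $\PSL(2,17)$-solvable and $p$ a prime divisor of $|G|$ outside $\{2,3,17\}$ — both guaranteed by the reduction above and by $x$ being a vertex of $\graphcomp(G)$. The substantive difficulty lives entirely in the proof of Lemma \ref{lem:psl217_no_3-p_or_3-col}, where ruling out the $\edge{3}{p}$ edge required the reduction to $\PSL(2,17)$ or $\SL(2,17)$ via Lemma \ref{lem:psl217_or_sl217_subgroup} together with the fixed-point representation-theoretic input; the corollary simply harvests that work.
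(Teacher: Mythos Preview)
Your proof is correct and follows essentially the same approach as the paper: use Lemma \ref{lem:psl217_no_3-17} to force any triangle to contain the edge $\edge{3}{17}$, then invoke Lemma \ref{lem:psl217_no_3-p_or_3-col} to rule out the third vertex being any prime $p\notin\{2,3,17\}$. The paper's version is just a two-sentence compression of your argument.
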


\begin{proof}
By Lemma \ref{lem:psl217_no_3-17}, the only possible triangles in $\graphcomp(G)$ are $\{2, 3, 17\}$ and $\{3, 17, p\}$, for $p \notin \{2, 3, 17\}$. But by Lemma \ref{lem:psl217_no_3-p_or_3-col}, there are no $\{3, 17, p\}$ triangles.
\end{proof}

\begin{corollary}\label{cor:psl217-no3p}
Let $G$ be pseudo $\PSL(2, 17)$-solvable. If $\{2, 3, 17\}$ forms a triangle in $\graphcomp(G)$, then $\edge{3}{p}$ is not an edge in $\graphcomp(G)$. 
\end{corollary}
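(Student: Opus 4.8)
The plan is to deduce this directly from Lemma \ref{lem:psl217_no_3-p_or_3-col}, whose dichotomy does essentially all of the work; the only thing to verify carefully is that the hypotheses of that lemma are actually met. First I would observe that the assumption---that $\{2,3,17\}$ forms a triangle in $\graphcomp(G)$---forces $G$ to be non-solvable. Indeed, by Lemma \ref{lem:solvablegroups} the complement of the prime graph of any solvable group is triangle-free, so a group whose prime graph complement contains a triangle cannot be solvable. Since $G$ is pseudo $\PSL(2,17)$-solvable but not solvable, it must possess a nonabelian composition factor, which can only be $\PSL(2,17)$; hence $G$ is in fact \emph{strictly} pseudo $\PSL(2,17)$-solvable, and Lemma \ref{lem:psl217_no_3-p_or_3-col} is applicable.

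Next I would fix an arbitrary prime $p \notin \{2,3,17\}$ dividing $|G|$ and invoke Lemma \ref{lem:psl217_no_3-p_or_3-col}. That lemma asserts that at least one of two alternatives holds: either $\graphcomp(G)$ is $3$-colorable and triangle-free, or $\edge{3}{p}$ is not an edge in $\graphcomp(G)$. The first alternative is directly contradicted by the presence of the $\{2,3,17\}$ triangle, so it must fail, leaving the second. Therefore $\edge{3}{p} \notin \graphcomp(G)$, and since $p$ was arbitrary this holds for every prime $p \notin \{2,3,17\}$, which is exactly the assertion of the corollary.

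I do not expect any genuine obstacle here: all of the substantive content---the reduction of $\PSL(2,17)$-extensions by a $2$-group via Lemma \ref{lem:psl217_or_sl217_subgroup}, and the coprime-action argument combining Lemma \ref{lem:rep_theory_biz} with the fixed-point statement of Lemma \ref{lem:SL(2,17)reptheory}---has already been discharged inside the proof of Lemma \ref{lem:psl217_no_3-p_or_3-col}. This corollary is merely the specialization of that lemma to the situation where the triangle is known to exist, in complete parallel with how Corollary \ref{cor:psl217-only1triangle} extracts its conclusion from the same lemma. The single point requiring a line of justification is the passage from ``pseudo'' to ``strictly pseudo'' in the first paragraph, which the triangle hypothesis supplies for free.
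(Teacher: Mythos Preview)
Your proposal is correct and follows essentially the same approach as the paper, which in fact gives no explicit proof for this corollary at all, treating it as an immediate consequence of Lemma~\ref{lem:psl217_no_3-p_or_3-col}. Your added justification that the triangle hypothesis forces $G$ to be \emph{strictly} pseudo $\PSL(2,17)$-solvable is a useful detail the paper leaves implicit.
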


\begin{corollary}\label{cor:psl217-3col}
Let $G$ be pseudo $\PSL(2, 17)$-solvable. Then $\graphcomp(G)$ is $3$-colorable.
\end{corollary}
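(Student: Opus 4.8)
The plan is to mirror the proofs of Corollaries \ref{cor:A6-3colorable} and \ref{cor:psl28_3-col}, exploiting the dichotomy already established in Lemma \ref{lem:psl217_no_3-p_or_3-col} together with the single-edge-removal result of Lemma \ref{lem:psl217_no_3-17}. First I would dispatch the solvable case immediately: if $G$ is solvable, then $\graphcomp(G)$ is $3$-colorable by Lemma \ref{lem:solvablegroups}. So we may assume $G$ is strictly pseudo $\PSL(2,17)$-solvable, which is precisely the hypothesis under which Lemma \ref{lem:psl217_no_3-p_or_3-col} applies.

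Next I would invoke that lemma: for a prime $p \notin \{2,3,17\}$, either $\graphcomp(G)$ is already $3$-colorable and triangle-free, in which case we are done, or there is no edge $\edge{3}{p}$ in $\graphcomp(G)$. Ranging over all such $p$, in the remaining case the only possible neighbors of the vertex $3$ are $2$ and $17$, so $\deg(3) \le 2$ in $\graphcomp(G)$.

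The key observation is then that a vertex of degree at most $2$ cannot obstruct $3$-colorability: it suffices to produce a $3$-coloring of $\graphcomp(G) \setminus \{3\}$ and then color the vertex $3$ with any color avoided by its (at most two) neighbors. Now $\graphcomp(G) \setminus \{3\}$ is an induced subgraph of $\graphcomp(G) \setminus \{\edge{3}{17}\}$ --- deleting the vertex $3$ simply also removes the harmless edge $\edge{2}{3}$ --- and the latter graph is $3$-colorable by Lemma \ref{lem:psl217_no_3-17}. Since $3$-colorability is inherited by subgraphs, $\graphcomp(G) \setminus \{3\}$ is $3$-colorable, and hence so is $\graphcomp(G)$.

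I do not expect any genuine obstacle in this corollary, since all of the heavy machinery (the reduction to $\PSL(2,17)$ or $\SL(2,17)$ via Lemma \ref{lem:psl217_or_sl217_subgroup}, the representation-theoretic input of Lemma \ref{lem:SL(2,17)reptheory}, and the coprime-action argument) is already packaged inside Lemma \ref{lem:psl217_no_3-p_or_3-col}. The only points requiring a light touch of care are the bookkeeping that $\graphcomp(G)\setminus\{3\}$ really is a subgraph of $\graphcomp(G)\setminus\{\edge{3}{17}\}$, and the standard fact that a vertex of degree at most $2$ never forces the chromatic number above $3$.
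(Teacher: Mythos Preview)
Your proposal is correct and follows essentially the same route as the paper: invoke Lemma \ref{lem:psl217_no_3-p_or_3-col} to reduce to the case where the vertex $3$ has degree at most $2$, then use Lemma \ref{lem:psl217_no_3-17} to $3$-color $\graphcomp(G)\setminus\{3\}$ and extend. Your explicit handling of the solvable case up front is a small improvement in precision, since Lemma \ref{lem:psl217_no_3-p_or_3-col} is stated only for strictly pseudo $\PSL(2,17)$-solvable groups.
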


\begin{proof}
By Lemma \ref{lem:psl217_no_3-p_or_3-col}, $\graphcomp(G)$ is either $3$-colorable and triangle-free or it has no $\edge{3}{p}$ edge for each $p \notin \{2, 3, 17\}$. If $\graphcomp(G)$ is $3$-colorable and triangle free, we are done. Otherwise, the only vertices $3$ can be connected to are $2$ and $17$. Thus, the vertex $3$ has degree at most $2$. So in order to show that $\graphcomp(G)$ is $3$-colorable, it suffices to show that $\graphcomp(G) \setminus \{3\}$ is $3$-colorable. However, $\graphcomp(G) \setminus \{3\}$ is a subgraph of $\graphcomp(G) \setminus \{\edge{3}{p}\}$, which is $3$-colorable by Lemma \ref{lem:psl217_no_3-17}. A subgraph of a $3$-colorable graph is $3$-colorable, so we are done.
\end{proof}

Finally, we are ready to classify the prime graphs of pseudo $\PSL(2,17)$-solvable groups. The classification turns out to be the same as for $\PSL(2,7)$, so we can follow a similar proof pattern as in Theorem \ref{thm:PSL_classification}.

\begin{theorem}\label{thm:PSL217_classification}
Let $\Gamma$ be a simple graph. Then $\Gamma$ is isomorphic to the prime graph of a pseudo $\PSL(2, 17)$-solvable group $G$ if and only if one of the following holds:
\begin{enumerate}
    \item $\graphcomp$ is triangle-free and $3$-colorable.
    \item $\graphcomp$ has one exactly one triangle $\{a,b,c\}$, the vertices $a, b$ are not connected to any other vertices in $\graphcomp$, and $\graphcomp$ has a $3$-coloring for which all the neighbors of $c$ other than $a$ and $b$ have the same color.
\end{enumerate}
\end{theorem}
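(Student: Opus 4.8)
The plan is to mirror the proof of Theorem \ref{thm:PSL_classification} almost verbatim, with $7$ replaced by $17$ as the ``connecting'' vertex $c$ and with $2,3$ again playing the roles of the isolated vertices $a,b$; all of the structural work has already been carried out by the lemmas of this section, so the theorem is essentially an assembly.

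For the forward direction I would start from a pseudo $\PSL(2,17)$-solvable group $G$ with $\Gamma \cong \Gamma(G)$. Three-colorability of $\graphcomp(G)$ is immediate from Corollary \ref{cor:psl217-3col}. If $\graphcomp(G)$ is triangle-free we are in case (1); otherwise Corollary \ref{cor:psl217-only1triangle} forces the unique triangle to be $\{2,3,17\}$, and Lemma \ref{lem:psl217-no2p} together with Corollary \ref{cor:psl217-no3p} shows that $2$ and $3$ have no neighbors outside it, so I set $\{a,b\} = \{2,3\}$ and $c = 17$. To extract the coloring condition I would pass to the Frobenius digraph $\digraph(G)$, which has no directed $3$-paths. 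By the orientation convention fixed after Lemma \ref{lem:psl217_no_3-17}, its triangle edges are $2 \to 3$, $2 \to 17$, and $17 \to 3$, while every edge joining $17$ to a prime $p \notin \{2,3,17\}$ is oriented $17 \to p$. Since $2 \to 17 \to p$ is then a directed $2$-path and $\digraph(G)$ has no $3$-paths, no such $p$ can carry an outgoing edge; coloring each vertex by its degree profile (sink $\mapsto \mathcal{I}$, source $\mapsto \mathcal{O}$, both $\mapsto \mathcal{D}$) therefore yields a proper $3$-coloring in which every neighbor of $17$ other than $2$ and $3$ is colored $\mathcal{I}$. This is exactly condition (2).

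For the backward direction I would reuse the casework of Theorem \ref{thm:PSL_classification}. Case (1) is handled by the solvable realization of \cite[Theorem 2.8]{solvable_groups}. If (2) holds with $\{a,b,c\}$ isolated, I would take $G = \PSL(2,17) \times H$ with $H$ a suitable solvable group supported away from $\{2,3,17\}$, using that $\graphcomp(\PSL(2,17))$ is exactly the triangle on $\{2,3,17\}$ (as $\PSL(2,17)$ has no element of order $6$, $34$, or $51$). If (2) holds with $c$ connected to further vertices, I would rebuild the semidirect product of Theorem \ref{thm:PSL_classification}: form the solvable base $K = Q \rtimes P$ from the $\mathcal{O},\mathcal{D}$ part of the coloring, and for each vertex $v_k \in \mathcal{I}$ adjacent to $c$ attach a modular representation $R_k$ of $\PSL(2,17) \times B_k$ via \cite[Lemma 3.5]{A5classification}, using in place of $\rho_1$ an irreducible complex representation of $\PSL(2,17)$ on which elements of order $17$ act fixed-point-freely while elements of order $2$ and $3$ have fixed points (and the trivial representation when $v_k$ is not adjacent to $17$). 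Such a representation is supplied by a $16$-dimensional discrete-series character of $\PSL(2,17)$ (cf. \cite{HumprheysSL217}): its value $-1$ on the unipotent class makes the restriction to a Sylow $17$-subgroup contain no trivial constituent, while its values on the order-$2$ and order-$3$ classes leave a nonzero fixed space. Setting $G = J \rtimes (\PSL(2,17) \times K)$ then gives $\digraph(G) \cong \digraph$ and hence $\Gamma(G) \cong \Gamma$.

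I expect the only genuinely new point to be this last representation-theoretic input: verifying that $\PSL(2,17)$ carries an irreducible complex representation with order-$17$ elements fixed-point-free but order-$2$ and order-$3$ elements fixing a vector, and confirming that the assembled semidirect product realizes $\Gamma$ on the nose. The remaining care is purely bookkeeping — checking that the digraph orientation now seated at $17$ (with $2$ as the source of the triangle rather than $3$) still produces the sink-coloring of the outer neighbors, exactly as the convention after Lemma \ref{lem:psl217_no_3-17} arranges.
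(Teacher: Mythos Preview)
Your proposal is correct and follows essentially the same approach as the paper: the forward direction assembles Corollary~\ref{cor:psl217-3col}, Corollary~\ref{cor:psl217-only1triangle}, Lemma~\ref{lem:psl217-no2p}, and Corollary~\ref{cor:psl217-no3p}, then extracts the coloring condition from the Frobenius digraph via the sink/source/both labeling, while the backward direction reuses the construction of Theorem~\ref{thm:PSL_classification} with the degree-$16$ representation $\rho_2$ of $\PSL(2,17)$ (exactly the representation the paper records in Table~\ref{table:3}) in place of $\rho_1$. The paper's proof is slightly terser---it simply points to Table~\ref{table:3} for the representation and to Theorem~\ref{thm:PSL_classification} for the construction, noting only that the colors of $2$ and $3$ swap---but your expanded account of why the $16$-dimensional discrete-series character has the required fixed-point behavior is a welcome addition rather than a deviation.
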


\begin{proof}
We first prove the forward direction. Assume that $\Gamma$ is the prime graph of a pseudo $\PSL(2, 17)$-solvable group $G$. If $\graphcomp$ is triangle-free, then by Corollary \ref{cor:psl217-3col}, $\graphcomp$ is $3$-colorable and we get (1). Otherwise, by Corollaries \ref{cor:psl217-only1triangle} and \ref{cor:psl217-no3p} and Lemma \ref{lem:psl217-no2p}, $\graphcomp$ has exactly one triangle, which is $\{2,3,17\}$, and $\graphcomp$ contains no edges of the form $\edge{2}{p}$ or $\edge{3}{p}$ for any $p \notin \{2,3,17\}$. 

Examine the Frobenius digraph $\digraph(G)$. Note that there will be no $3$-paths in this case by the proof of Theorem \ref{lem:psl217_no_3-17}. Now, if $7$ is adjacent to some $p \notin \{2,3,17\}$ in $\digraph(G)$, we have that the orientation must be $2 \rightarrow 17 \rightarrow p$. Therefore, if $q$ is a prime adjacent to $p$, then the orientation of the $p-q$ edge must be $q \to p$. Now define a coloring on this digraph. Label all vertices with zero out-degree with $\mathcal{I}$, all vertices with zero in-degree and nonzero out-degree with $\mathcal{O}$, and all vertices with non-zero in- and out-degree with $\mathcal{D}$. This is a well-defined three coloring, since $\digraph(G)$ contains no $3$-paths. And all vertices adjacent to $17$ other than $2$ and $3$ are labeled with $\mathcal{I}$, so we arrive at (2). This concludes the forward direction of the proof.

We next prove the backwards direction. Consider the irreducible complex representation $\rho_2$ of $\PSL(2,17)$ listed in Table \ref{table:3} for which $g \in \PSL(2, 17)$ acts fixed point freely if and only if $|g| = 17$. This is similar to the irreducible representation $\rho_1$ of $\PSL(2,7)$ also listed in Table \ref{table:3}. Due to this similarity, we may pursue the same construction as in Theorem \ref{thm:PSL_classification}, only replacing every occurrence of $7$ with the number $17$, and replacing every occurrence of the irreducible representation $\rho_1$ with the irreducible representation $\rho_2$. (Also, the colors of the vertices $2$ and $3$ in the construction will switch.) Since the constraints on $\Gamma$ are the same between this theorem and Theorem \ref{thm:PSL_classification}, we may construct a pseudo $\PSL(2, 17)$-solvable group $G$ such that $\graphcomp(G) \cong \graphcomp$.
\end{proof}


\section{Pseudo $\mathcal{K}_3$-solvable Groups}\label{section:multiple-Ts}

Now we explore groups with multiple non-isomorphic composition factors which are $K_3$-groups. Recall that the $K_3$-groups are the nonabelian simple groups whose orders have exactly three prime factors and that $\mathcal{K}_3$ denotes the set of all $K_3$-groups. Heuristically, we observe that adding $K_3$-groups as composition factors to a group should tend to simplify the complement to its prime graph. This is because all eight $K_3$-groups share the prime divisors $2$ and $3$, and we expect that all $K_3$-groups act trivially on one another. This removes many edges from the complement of the prime graph. Proposition \ref{prop:multiple-ts} states a result which agrees with this intuition. 

To prove that proposition, we first need to prove the following generalization to Corollary \ref{cor:subgroupcor}. 

\begin{lemma}\label{lem:strongsubgrouplemma}
Let $G$ be strictly pseudo $\{T_1, \dots, T_k\}$-solvable, where $1 \leq k \leq 8$, and each $T_i$ is a $K_3$-group. Suppose $G$ has a chief series for which the lowest nonabelian chief factor is $T_1^n$. Then $G$ has a subgroup $K = N.T_1$, where $N$ is solvable and $\pi(G) = \pi(K)$. In particular, $\graphcomp(G)$ is obtained by removing edges from $\graphcomp(K)$.
\end{lemma}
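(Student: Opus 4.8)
The plan is to follow the proof of Lemma~\ref{lem:subgrouplemma} as closely as possible, with one essential modification forced by the presence of several simple types. In that lemma the key object was a Hall $\pi'$-subgroup for $\pi = \pi(T)$; here, however, $\pi(T_1) = \{2,3,r_1\}$ need not be a separable prime set, since some other factor $T_i$ may be simple with order divisible by $2,3 \in \pi(T_1)$ and by a third prime $r_i \ne r_1$, so that $T_i$ is neither a $\pi(T_1)$-group nor a $\pi(T_1)'$-group and $G$ need not be $\pi(T_1)$-separable. The remedy is to enlarge the prime set: put $\rho = \bigcup_{i=1}^k \pi(T_i) = \{2,3\} \cup \{r_1,\dots,r_k\}$. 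Every $T_i$ is a $\rho$-group and every abelian composition factor is a $\rho$- or $\rho'$-group, so $G$ is $\rho$-separable and hence has a Hall $\rho'$-subgroup $H$. Since $\{2,3\} \subseteq \rho$, the group $H$ has order coprime to $6$ and so is solvable (it can have no $K_3$ composition factor), with $\pi(H) = \pi(G) \cap \rho'$.

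First I would carry out the reduction from Lemma~\ref{lem:subgrouplemma}: letting $N_0$ be the normal subgroup directly below the chief factor $T_1^n$, the quotient $G/N_0$ has $V := T_1^n$ as a minimal normal subgroup, $N_0$ is solvable, and a subgroup of the required shape in $G/N_0$ pulls back to $(N_0.N).T_1 \leq G$ with $N_0.N$ solvable; as $\pi(G) = \pi(N_0) \cup \pi(G/N_0)$, it suffices to treat $G/N_0$. So assume $V = T_1^n$ is minimal normal in $G$. Now $H$ acts on $V$, and because $\pi(H) \cap \pi(\Aut(T_1)) = \pi(H) \cap \pi(T_1) \subseteq \rho' \cap \rho = \emptyset$, this action is coprime and every point stabilizer acts trivially on its copy of $T_1$. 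Exactly as in Lemma~\ref{lem:subgrouplemma} (using $\Aut(T_1^n) = \Aut(T_1) \wr S_n$ and \cite[Lemma 2.2]{ISAACS1996125}) this produces a single diagonal copy $X := C_V(H) \cong T_1$ centralized by $H$, so that $X \times H = H.T_1 \leq G$ with $H$ solvable.

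The crux is then to enlarge $H$ to a solvable subgroup $N$ that still centralizes $X$ but now covers the missing primes, i.e.\ $\pi(N) \supseteq \pi(G) \setminus \{2,3,r_1\}$; then $K := X \times N = N.T_1$ is as required. The primes already covered by $X \times H$ are $\{2,3,r_1\} \cup (\pi(G)\cap\rho')$, so what is missing is precisely the set of extra third primes $r_i \ne r_1$. To locate these I would use $C_G(V)$: as $V$ is nonabelian, $V \cap C_G(V) = 1$ and $G/(V\,C_G(V))$ embeds in $\Out(T_1) \wr S_n$, whose only nonabelian composition factors are alternating groups. Hence any composition factor $T_i$ not isomorphic to $A_5$ or $A_6$ cannot come from this quotient and must be a composition factor of $V\,C_G(V)$; since $\pi(V) = \{2,3,r_1\}$ omits $r_i$, such a $T_i$ is actually a composition factor of $C_G(V)$, giving $r_i \mid |C_G(V)| \leq |C_G(X)|$ with an $r_i$-element available inside $C_G(X)$.

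I expect the main obstacle to be twofold. First, from each such $T_i$ one must extract a solvable subgroup of order divisible by $r_i$ (for instance a Frobenius subgroup of $T_i$) and, more delicately, combine these across all $i$ together with $H$ into one solvable subgroup $N$ centralizing the single fixed copy $X$ — the difficulty being that two solvable subgroups need not generate a solvable group, and that $r_i$ is entangled with $2$ and $3$ inside the simple group $T_i$, so no Hall-subgroup argument isolates it. I would resolve this by working inside $C_G(X)$, exploiting the normal structure of $C_G(V)$ (reducing, via the Ballester--Bolinches description used in Lemma~\ref{lem:onecopyofT}, to solvable pieces living in a product of the extra factors) together with an induction on $|G|$. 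Second, the case where an extra type is $A_5$ or $A_6$ and its prime $5$ is supplied only by the permutation action on the $n$ copies must be treated separately: there a suitable $5$-element permuting the copies fixes, hence centralizes, a diagonal copy, and one arranges for it to centralize $X$. Undoing the first reduction then yields $K = N.T_1 \leq G$ with $N$ solvable and $\pi(K) = \pi(G)$, so that $\graphcomp(G)$ is obtained from $\graphcomp(K)$ by deleting edges.
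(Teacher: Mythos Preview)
Your approach is genuinely different from the paper's, and the difficulties you yourself flag are real gaps that you do not resolve. The crux of your argument is to produce, inside $C_G(X)$, a single \emph{solvable} subgroup $N$ whose order is divisible by every extra prime $r_i$; you correctly note that two solvable subgroups need not generate a solvable one and that the $r_i$ are entangled with $2$ and $3$ inside $T_i$, and then you defer this to ``an induction on $|G|$'' without saying what the inductive statement is or why the hypotheses are inherited. Similarly, in the $A_5/A_6$ case your claim that a suitable $5$-element ``centralizes a diagonal copy, and one arranges for it to centralize $X$'' is not justified: the specific diagonal $X = C_V(H)$ is determined by the $H$-action, and there is no reason a $5$-cycle in $S_n$ should fix it. These are not cosmetic issues; they are exactly the heart of the matter, and as written your proof is a sketch of a strategy rather than a proof.

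The paper avoids all of this by inducting from the top rather than building up from the bottom. Induct on the number $m$ of nonabelian chief factors. Write $G = N_1 . T_i^{\ell} . H$ with $H$ solvable (everything above the \emph{top} nonabelian chief factor $T_i^{\ell}$) and $N_1$ everything below. Apply Corollary~\ref{cor:subgroupcor} to the strictly pseudo $T_i$-solvable group $T_i^{\ell}.H$ to get a subgroup $N_2.T_i$ with $N_2$ solvable and the same prime set, and then pass to the further subgroup $N_2.P$ where $P$ is a Sylow $p$-subgroup of $T_i$ for $\pi(T_i) = \{2,3,p\}$. The only primes possibly lost are $2$ and $3$, but these already divide $|N_1|$ because $T_1^n$ sits below. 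Hence $K_1 := N_1.N_2.P \leq G$ is a subgroup with $\pi(K_1) = \pi(G)$, fewer nonabelian composition factors than $G$, and with $T_1^n$ still at the bottom, so the inductive hypothesis finishes. This sidesteps entirely the problem of gluing solvable pieces from different $T_i$'s: at each step only one nonabelian factor is replaced by a solvable $p$-group, and the remaining ones are handled by recursion rather than simultaneously.
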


\begin{proof}
Let $m$ be the number of nonabelian chief factors in the given chief series for $G$. We will induct on $m$. If $m = 1$, then $G$ is strictly pseudo $T_1$-solvable, so we are done by Corollary \ref{cor:subgroupcor}. \\

Now suppose the statement is true for some $m \geq 1$ (and any $k$), and let $G$ be strictly pseudo $\{T_1, \dots, T_k\}$-solvable with a chief series with $m + 1$ nonabelian chief factors, the lowest of which is $T_1^n$. Suppose the top nonabelian chief factor is $T_i^\ell$. Thus, we can write $G = N_1.T_i^\ell.H$, where $H$ is solvable and $N_1$ is pseudo $\{T_1, \dots, T_k\}$-solvable. By Corollary \ref{cor:subgroupcor}, $T_i^\ell.H$ has a subgroup $N_2.T_i$, where $N_2$ is solvable, such that $\pi(T_i^\ell.H) = \pi(N_2.T_i)$. If $\pi(T_i) = \{2,3,p\}$, then let $P$ be a Sylow $p$-subgroup of $T_i$. Then, $T_i^\ell.H$ has a subgroup $N_2.P$, where $\pi(T_i^\ell.H) = \pi(N_2.P) \cup \{2,3\}$. But since $T_1^n$ is a chief factor of $N_1$, we know $2, 3 \mid |N_1|$. Thus, $G$ has a subgroup $K_1 = N_1.N_2.P$ such that $\pi(K_1) = \pi(G)$. And, $K_1$ has a chief series with $m$ nonabelian chief factors, the lowest of which is $T_1^n$. By the inductive hypothesis, $K_1$ has a subgroup $K = N.T_1$ for which $N$ is solvable and $\pi(K) = \pi(K_1) = \pi(G)$.
\end{proof}

\begin{proposition}\label{prop:multiple-ts}
Let $G$ be a pseudo $\mathcal{K}_3$-solvable group with at least $2$ (not necessarily distinct) nonabelian composition factors. Then $\graphcomp(G)$ is triangle-free and $3$-colorable.
\end{proposition}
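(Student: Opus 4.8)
The plan is to reduce to a single nonabelian composition factor via Lemma \ref{lem:strongsubgrouplemma}, and then use the \emph{presence} of a second one to delete the last surviving triangle. Fixing a chief series of $G$ and letting $T_1^n$ be its lowest nonabelian chief factor, I obtain a subgroup $K = N.T_1$ with $N$ solvable and $\pi(K) = \pi(G)$, so that $\graphcomp(G)$ arises from $\graphcomp(K)$ by deleting edges. The feature of this reduction I would record at the outset is that, tracing the inductive construction in Lemma \ref{lem:strongsubgrouplemma}, the third prime of \emph{every other} nonabelian composition factor of $G$ divides $|N|$, because each such factor sitting above $T_1$ is absorbed into $N$ through its Sylow subgroup for that prime. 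Since $K$ is pseudo $T_1$-solvable, I can then feed it to the single-factor classifications.

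I would dispatch $3$-colorability and isolate the only candidate triangle using those classifications. If $T_1 \in \{U_3(3), U_4(2), \PSL(3,3)\}$, then $\graphcomp(K)$ is already triangle-free and $3$-colorable by Theorems \ref{thm:U33_classification}, \ref{thm:U42_classification}, \ref{thm:psl33-classification}, and edge deletion preserves both properties, finishing immediately. If $T_1 \in \{\PSL(2,7), A_6, \PSL(2,8), \PSL(2,17)\}$, then by Theorems \ref{thm:PSL_classification}, \ref{thm:A6-classification}, \ref{thm:PSL217_classification} and Corollaries \ref{cor:psl28_3-col}, \ref{cor:psl28_only_2_3_7}, the graph $\graphcomp(K)$ is $3$-colorable with at most the single triangle $\{2,3,p_1\}$, where $p_1$ is the third prime of $T_1$; $3$-colorability passes for free to the subgraph $\graphcomp(G)$, so only that triangle remains to kill. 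The awkward case is $T_1 = A_5$, since pseudo $A_5$-solvable complements may carry many triangles and are not handled so cleanly; here I would instead argue through the critical edge, observing that Lemma \ref{lem:remove_p-r_then_solvable} (applied to $K$ with $D_{10}, A_4 \leq A_5$) makes $\graphcomp(K)\setminus\{\edge{3}{5}\}$ triangle-free and $3$-colorable, so it would suffice to prove $\edge{3}{5}\notin\graphcomp(G)$.

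The crux is therefore to produce the missing element orders from the second nonabelian factor: for $T_1 \neq A_5$ it is enough to show $\edge{2}{3}\notin\graphcomp(G)$ (an element of order $6$), which destroys the only candidate triangle $\{2,3,p_1\}$, and for $T_1 = A_5$ I need an element of order $15$. I would organize this by cases. If some nonabelian factor repeats, Lemma \ref{lem:onecopyofT} applied to the pair $2,3$ (and, in the $A_5$ case, also to $3,5$) yields the required element at once. If some factor lies in $\{U_3(3), U_4(2), \PSL(3,3)\}$, it already contains an element of order $6$, which lifts to $G$. If the third prime $p_1$ of $T_1$ is shared by another nonabelian factor, then $p_1 \mid |N|$, and the ``$r \mid |N|$'' branch in the proof of Lemma \ref{lem:remove_p-r_then_solvable} collapses $\graphcomp(K)$ to a solvable-like graph, finishing with no element-order computation at all. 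Finally, when the relevant second factor is $\PSL(2,q)$ with $q\in\{7,9,17\}$, I would extract a section of the form $P.\PSL(2,q)$ with $P$ a nontrivial $2$-group and invoke Lemma \ref{lem:extensions_of_A6} with $p=2$ to obtain order $6$.

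The main obstacle I expect is exactly the residual configurations that evade all of the above: two distinct factors drawn from $\{A_5,\PSL(2,8)\}$ (neither containing an element of order $6$, and realized as $\PSL(2,q)$ only for $q\le 5$ or even $q$, so Lemma \ref{lem:extensions_of_A6} does not apply), together with the forced case in which $A_5$ is the unique minimal normal subgroup and $T_1 = A_5$ cannot be chosen away. In these situations the reduced group $K$ no longer sees the second factor, so the extraction of a normal $2$-group beneath a $\PSL(2,q)$-section is genuinely nonobvious, and I would instead have to argue \emph{directly} in $G$ that the two nonabelian factors supply commuting elements of coprime orders -- producing a near-direct-product section in which one factor contributes an involution and the other an element of order $3$ (respectively $5$) -- to manufacture the element of order $6$ (respectively $15$). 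Carrying out this extraction uniformly, rather than case-checking the finitely many offending pairs by hand, is the delicate heart of the argument.
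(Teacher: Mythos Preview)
Your reduction via Lemma \ref{lem:strongsubgrouplemma} and the identification of the single candidate triangle are fine, and you correctly isolate the $A_5$--bottom case as the obstruction. But your proposal is a plan, not a proof: you explicitly leave open the ``delicate heart'' --- producing an element of order $15$ when $T_1 = A_5$ and the second factor lies in $\{A_5,\PSL(2,8)\}$ --- and your extraction of a $P.\PSL(2,q)$ section with nontrivial $P$ in the other cases is asserted rather than justified. Your path also leans on the full single-factor classifications (Theorems \ref{thm:PSL_classification}, \ref{thm:A6-classification}, \ref{thm:PSL217_classification}, etc.), making the argument heavier than it needs to be.

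The paper takes a shorter, uniform route that bypasses all of this case analysis and in particular resolves your residual configurations at a stroke. Rather than killing one edge of the triangle, it shows that $\graphcomp(G)$ has \emph{no} edges among $\{2,3,p\}$ (where $\pi(T_1) = \{2,3,p\}$) whenever a second nonabelian factor is present. The mechanism is exactly the ``near-direct-product section'' you gesture at, made precise: take a section $H_1 = T_1.S.T_2$ in $G$ and let $C = C_{H_1}(T_1)$. Since $H_1/C$ embeds in $\Aut(T_1)$ and $\Inn(T_1) \cong T_1$ sits inside the image, one has $|C| = |S|\,|T_2|/a$ with $a \mid |\Out(T_1)| \in \{1,2,3,4\}$. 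This order bound forces $C$ to contain elements whose orders are divisible by $2$ and by $3$ (one checks the two cases $p = q$ and $p \neq q$, where $\pi(T_2) = \{2,3,q\}$; in the latter case a Sylow $q$-subgroup of $H_1$ lies in $C$, whence $CM = H_1$ for $M = T_1.S$ and representatives of orders $2$ and $3$ in $T_2 \cong CM/M$ lift to $C$). These elements commute with all of $T_1$, so $H_1$ --- hence $G$ --- contains elements of orders $6$, $2p$, and $3p$ simultaneously. With all three edges gone, Lemma \ref{lem:remove_p-r_then_solvable} applied to the subgroup $K = N.T_1$ from Lemma \ref{lem:strongsubgrouplemma} finishes in one line, and no appeal to the individual $K_3$-group classifications is needed.
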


\begin{proof}
Take a chief series of $G$, and let $T_1^m$ be the lowest nonabelian chief factor. Since $T_1 \in \mathcal{K}_3$, we know $\pi(T_1) = \{2,3,p\}$ for some prime $p$. We will first show that $\graphcomp(G)[\{2,3,p\}]$ is disconnected. 

If $G$ is strictly pseudo $T_1$-solvable, then by assumption, $G$ has at least two copies of $T_1$ in its composition series. Then by the contrapositive of Lemma \ref{lem:onecopyofT}, $\graphcomp(G)[\{2,3,p\}]$ is indeed disconnected. 

Thus, we may assume $G$ has another nonabelian composition factor which is not isomorphic to $T_1$. Then, $G$ contains a section $H = T_1^n.S.T_2^k$, where $S$ is solvable, $T_1 \neq T_2 \in \mathcal{K}_3$, and $n, k \in \mathbb{N}$. If $n > 1$, then $H$ contains a subgroup $T_1^n$ whose prime graph complement is disconnected, so $\graphcomp(G)[\{2,3,p\}]$ is disconnected. Therefore, we may assume $n = 1$, so $H = T_1.S.T_2^k$. We further restrict to the subgroup $H_1 = T_1.S.T_2$. 

Consider the conjugation action of $H_1$ on $T_1$. Let $\phi : H_1 \rightarrow \Aut(T_1)$ denote the homomorphism of this action. Let $C = C_{H_1}(T_1)$ be the kernel of $\phi$. By the first isomorphism theorem, $H_1 / C \cong \phi(H_1)$. Considering orders, $|C| = \frac{|H_1|}{|\phi(H_1)|}$. Noting that $\phi(H_1)$ at least contains $\Inn(T_1) = T_1$, we get that $|\phi(H_1)| = a|T_1|$ for some $a$ dividing $|\Out(T_1)|$. Thus, $|C| = \frac{|T_1||S||T_2|}{a|T_1|} = \frac{|T_2||S|}{a}$. Note that by referring to Table \ref{table:1}, we find that $a = 1, 2, 3$, or $4$.

Suppose $\pi(T_2) = \{2,3,q\}$. We take cases on whether $p$ and $q$ are distinct.

If $p = q$, then since $|C| = \frac{|T_2||S|}{a}$ and $a = 1, 2, 3$, or $4$, we will either get $2, p \mid |C|$ or $3, p \mid |C|$. Thus, $C$ contains an element of order $p$ and an element of order $2$ or $3$, both of which commute with elements of order $2, 3$, and $p$ in $T_1$. Therefore, $H_1$ contains elements of order $6, 2p$, and $3p$, and $\graphcomp(G)[\{2,3,p\}]$ is disconnected.

If $p \neq q$, let $Q$ be a Sylow $q$-subgroup of $H_1$. Then since $q \nmid a$ and $|C| = \frac{|T_2||S|}{a}$, we know $Q \leq C$. For convenience, let $M = T_1.S$, so $M \unlhd H_1$. Since $Q \leq C \unlhd H_1$, we obtain $1 < QM/M \leq CM/M \unlhd H_1 / M \cong T_2$. Since $CM/M$ is a nontrivial normal subgroup of $H_1 / M$, which is simple, we must have $CM/M = H_1 / M$ and hence $CM = H_1$. 

Therefore, $T_2 \cong CM/M = \{cM \mid c \in C\}$. Since $T_2$ has elements of order $2$ and $3$, this implies the existence of elements $c_2, c_3 \in C$ for which the order of $c_2 M$ is $2$ and the order of $c_3 M$ is $3$. Then $2$ divides the order of $c_2$ and $3$ divides the order of $c_3$, and $c_2, c_3$ commute with elements of order $2, 3$, and $p$ in $T_1$. Again, this implies $\graphcomp(G)[\{2,3,p\}]$ is disconnected.

We have proven that in all cases, $\graphcomp(G)[\{2,3,p\}]$ is disconnected. By Lemma \ref{lem:strongsubgrouplemma}, $G$ contains a pseudo $T_1$-solvable subgroup $K$ such that $\pi(K) = \pi(G)$. Referring to Table \ref{table:1}, $T_1$ satisfies the conditions of Lemma \ref{lem:remove_p-r_then_solvable}. So $\graphcomp(K)$ minus some edge between $2$, $3$, and $p$ is $3$-colorable and triangle-free. But we just showed there are no edges between $2$, $3$, and $p$ in $\graphcomp(K)$. Therefore, $\graphcomp(K)$ and thus $\graphcomp(G)$ is triangle-free and $3$-colorable.
\end{proof}

As a consequence of Proposition \ref{prop:multiple-ts}, we obtain a large class of groups for which the prime graphs are isomorphic to the prime graph of a solvable group. The following theorem summarizes this idea.

\begin{theorem}\label{thm:lookslikesolvable}
Let $G$ be a pseudo $\mathcal{K}_3$-solvable group, and suppose one of the following holds:
\begin{enumerate}
    \item $G$ has no nonabelian composition factors (i.e, $G$ is solvable);
    \item $G$ has exactly one nonabelian composition factor, which is $\PSL(3,3), U_3(3)$, or $U_4(2)$; or
    \item $G$ has at least two (not necessarily distinct) nonabelian composition factors.
\end{enumerate}
Then, $\graphcomp(G)$ is triangle-free and $3$-colorable.
\end{theorem}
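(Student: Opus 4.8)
The plan is to dispatch the three enumerated conditions separately, since in each case the desired conclusion has already been secured earlier in the paper; the only work is to route $G$ into the correct previously-proved result and verify that the hypotheses match.

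For condition (1), if $G$ is solvable then Lemma \ref{lem:solvablegroups} gives immediately that $\graphcomp(G)$ is triangle-free and $3$-colorable, so nothing further is needed.

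For condition (2), suppose $G$ has exactly one nonabelian composition factor $T$, with $T$ one of $\PSL(3,3)$, $U_3(3)$, or $U_4(2)$. Since every other composition factor is then abelian, $G$ is pseudo $T$-solvable. I would then cite the matching classification theorem according to the isomorphism type of $T$: Theorem \ref{thm:psl33-classification} when $T = \PSL(3,3)$, Theorem \ref{thm:U33_classification} when $T = U_3(3)$, and Theorem \ref{thm:U42_classification} when $T = U_4(2)$. Each of these theorems characterizes the prime graphs of pseudo $T$-solvable groups as exactly those whose complement is $3$-colorable and triangle-free, so the forward implication applied to $\Gamma(G)$ yields the conclusion directly.

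For condition (3), the statement is precisely Proposition \ref{prop:multiple-ts}: any pseudo $\mathcal{K}_3$-solvable group with at least two (not necessarily distinct) nonabelian composition factors has a triangle-free, $3$-colorable prime graph complement. Since $G$ is assumed pseudo $\mathcal{K}_3$-solvable, this proposition applies verbatim, completing the final case. The only real care required is bookkeeping: confirming that in condition (2) the single-factor hypothesis genuinely places $G$ in the pseudo $T$-solvable setting so that the three separate classification theorems may be invoked, and that the nonabelian factors in condition (3) are indeed $K_3$-groups, as guaranteed by pseudo $\mathcal{K}_3$-solvability. No fundamentally new argument is needed; the theorem is a consolidation of the section-by-section results obtained throughout the paper, so I expect the main (and only) subtlety to be this routing rather than any substantive mathematical obstacle.
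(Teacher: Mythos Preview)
Your proposal is correct and matches the paper's own proof essentially line for line: the paper simply cites Lemma~\ref{lem:solvablegroups} for case~(1), Theorems~\ref{thm:U33_classification}, \ref{thm:U42_classification}, and~\ref{thm:psl33-classification} for case~(2), and Proposition~\ref{prop:multiple-ts} for case~(3). Your additional bookkeeping remarks about why each cited result applies are accurate and make the routing explicit, but no further argument is needed.
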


\begin{proof}
This follows from Lemma \ref{lem:solvablegroups}, Theorem \ref{thm:U33_classification}, Theorem \ref{thm:U42_classification}, Theorem \ref{thm:psl33-classification}, and Proposition \ref{prop:multiple-ts}.
\end{proof}

\section{Outlook}\label{section:outlook}

    
    
    
    

Several natural questions arise out of the classifications of this paper. The classification of prime graphs of \textit{strictly} pseudo $T$-solvable groups remains an open problem for each $T \in \mathcal{K}_3$, as discussed in Remark \ref{rmk:Tim's-remark}. Topics for further research include the study of pseudo $T$-solvable groups where $T$ is allowed to be a $K_4$-group (a simple nonabelian group with exactly $4$ prime divisors), an alternating group, or a Suzuki group. Allowing Suzuki groups into the composition series could be particularly interesting because this could lead to a classification of prime graphs for all $3'$-groups. \\
Observe that as we moved away from solvability in this paper, the complements of the prime graphs of all
the groups we studied here stayed $3$-colorable, but some lost triangle-freeness (that is, have a triangle). This raises the following question posed by Maslova in \cite[Section 5]{inproceedings}: Can one also deviate from solvability ``the other way round'', that is, keep triangle-freeness, but violate 3-colorablility? The smallest triangle-free graph which is not 3-colorable is known as the Groetzsch graph. Originally, we hoped to find a group with prime graph complement being the Groetzsch graph in the realm of pseudo $\mathcal{K}_3$-solvable groups, but as we have seen, such a group does
not exist. We now pass on the challenge to the reader.


\section{Appendix}

\begin{table}[h!]
\begin{tabular}{|c||c|c|c|c|c|}
    \hline
     $G$ & $|G|$ & $\Out(G)$ & Sylow Subgroups & Relevant Subgroups & $\graphcomp(G)$ \\ \hline \hline
     $A_5$ & $2^2 \cdot 3 \cdot 5$ & $C_2$ & $V_4$, $C_3$, $C_5$ & $D_{10}$, $A_4$, $S_3$ & \primetri\\
     $\PSL(2, 7)$ & $2^3 \cdot 3 \cdot 7$ & $C_2$ & $D_8$, $C_3$, $C_7$ & $S_4$, $A_4$, $C_7 \rtimes C_3$ & \primetri \\
     $A_6$ & $2^3 \cdot 3^2 \cdot 5$ & $V_4$ & $D_8$, $C_3^2$, $C_5$ & $C_3^2 \rtimes C_4$, $S_4$, $D_{10}$ $A_5$ & \primetri \\
     $\PSL(2, 8)$ & $2^3 \cdot 3^2 \cdot 7$ & $C_3$ & $C_2^2$, $C_9$, $C_7$ & $C_2^3 \rtimes C_7$, $D_{14}$, $D_{18}$, $S_3$ & \primetri\\
     $\PSL(2, 17)$ & $2^4 \cdot 3^2 \cdot 17$ & $C_2$ & $D_{16}$, $C_9$, $C_{17}$ & $S_4$, $D_{18}$,  $D_{34}$ & \primetri\\
     $\PSL(3, 3)$ & $2^4 \cdot 3^3 \cdot 13$ & $C_2$ & $SD_{16}$, $C_3^2 \rtimes C_3$, $C_{13}$ & $C_{13} \rtimes C_3$ & \primeline\\
     $U_3(3)$ & $2^5 \cdot 3^3 \cdot 7$ & $C_2$ & $C_4^2 \rtimes C_2$, $C_3^2 \rtimes C_3$, $C_7$ & $\PSL(2, 7)$ & \primeline\\
     $U_4(2)$ & $2^6 \cdot 3^4 \cdot 5$ & $C_2$ & $(C_2^4 \rtimes C_2) \rtimes C_2$, $C_3^3 \rtimes C_3$, $C_5$ & $A_6$, $A_5$ & \primeline \\
     \hline

\end{tabular}

\caption{$K_3$-groups and some structural information. Relevant subgroups refer to key subgroups which are referenced in proofs in the paper.}
    \label{table:1}
    
\end{table}

The information in Tables \ref{table:2} and \ref{table:3} was computed via GAP \cite{GAP}. 

\begin{table}[h!]
\begin{tabular}{|c||c|}
    \hline
     $G$ & Information about Irr$(G)$ \\ \hline \hline
     $A_6$ & In all representations, elements of order $2, 3,$ or $5$ have a fixed point. \\
     $\PSL(2,17)$ & In all representations, elements of order $2$ or $3$ have a fixed point. \\
     $\SL(2,17)$ & In all representations, elements of order $3$ have a fixed point. \\
     \hline

\end{tabular}

\caption{General information on irreducible complex representations of relevant groups.}
    \label{table:2}
    
\end{table}

\begin{table}[h!]
\begin{tabular}{|c||c|c|c|}
    \hline
     $G$ & $\rho \in \text{Irr}(G)$ & $\deg \rho$ & Information about $\rho$ \\ \hline \hline
     $\PSL(2,7)$ & $\rho_1$ & $3$ & $g \in \PSL(2,7)$ acts fixed point freely if and only if $|g| = 7$ \\
     $\PSL(2,17)$ & $\rho_2$ & $16$ & $g \in \PSL(2,17)$ acts fixed point freely if and only if $|g| = 17$ \\
     \hline

\end{tabular}

\caption{Information on specific irreducible complex representations of relevant groups.}
    \label{table:3}
    
\end{table}

\begin{center}
{\bf Acknowledgements}
\end{center}
\vspace{11pt}

This research was conducted under NSF-REU grant DMS-1757233, DMS-2150205 and NSA grant H98230-21-1-0333, H98230-22-1-0022 during the Summer of 2022 by the first, third, and fourth authors under the supervision of 
the second author. The authors gratefully acknowledge the financial support of NSF and NSA, and also thank Texas State University for providing a great working environment and support. 

\printbibliography

\end{document}